\newtheorem{thm}{Theorem}
\newtheorem{lem}[thm]{Lemma}
\newtheorem{rem}[thm]{Remark}
\newcommand{\R}{\mathbb{R}}
\newcommand{\ud}{u^\dagger} % alternate version
\newcommand{\udd}{u^\ddagger} % alternate version
\newcommand{\sd}{s^\dagger}
\renewcommand{\matrix}[2]{ \left(\begin{array}{#1} #2 \end{array}\right)}
\title{Phase Resetting in an Asymptotically Phaseless System:  On the Phase Response of Limit Cycles Verging on a Heteroclinic Orbit.}
\author{\normalsize\textsc{%
    Kendrick M. Shaw$^1$, % 
    Hillel J. Chiel$^{1,2,3}$, %
    Peter J. Thomas$^{1,4,5}$}\\
\small Departments of $^1$Biology, $^2$Biomedical Engineering, $^3$Neurosciences,\\ 
\small $^4$Mathematics and $^5$Cognitive Science\\
\small Case Western Reserve University, 10900 Euclid Avenue, Cleveland Ohio 44106}
\newlength{\figwidth}
\newcommand{\BMH}{\cite{BrownMoehlisHolmes:2004:NeComp} }
\begin{document}
\maketitle
\begin{abstract}
Rhythmic behaviors in neural systems often combine features of limit cycle dynamics (stability and periodicity) with features of near heteroclinic or near homoclinic cycle dynamics (extended dwell times in localized regions of phase space).  Proximity of a limit cycle to one or more saddle equilibria can have a profound effect on the timing of trajectory components and response to both fast and slow perturbations, providing a possible mechanism for adaptive control of rhythmic motions. Reyn showed that for a planar dynamical system with a stable heteroclinic cycle (or separatrix polygon), small perturbations satisfying a net inflow condition will generically give rise to a stable limit cycle (Reyn, 1980; Guckenheimer and Holmes, 1983).  Here we consider the asymptotic behavior of the infinitesimal phase response curve (iPRC) for examples of two systems satisfying Reyn's inflow criterion, (i) a smooth system with a chain of  four hyperbolic saddle points and (ii) a piecewise linear system corresponding to local linearization of the smooth system about its saddle points.  For system (ii), we obtain exact expressions for the limit cycle and the iPRC as a function of a parameter $\mu>0$ representing the distance from a heteroclinic bifurcation point.  In the $\mu\to 0$ limit, we find that perturbations parallel to the unstable eigenvector direction in a piecewise linear region lead to divergent phase response, as previously observed (Brown, Moehlis and Holmes (2004) \textit{Neural Computation}).  In contrast to previous work, we find that perturbations parallel to the \emph{stable} eigenvector direction can lead to either divergent or convergent phase response, depending on the phase at which the perturbation occurs.  In the smooth system (i), we show numerical evidence of qualitatively similar phase specific sensitivity to perturbation.
\end{abstract}

keywords:  (MSC database) 
\begin{itemize}
\item 	34C05  	Location of integral curves, singular points, limit cycles
\item 	37C27  	Periodic orbits of vector fields and flows
\item		37C29  	Homoclinic and heteroclinic orbits
\item 	37G15  	Bifurcations of limit cycles and periodic orbits
\item 	70K05  	Phase plane analysis, limit cycles
\item		70K44  	Homoclinic and heteroclinic trajectories
\end{itemize}

Movies:
\begin{itemize}
\item \begin{verbatim}http://dynamicspjt.case.edu/~kms15/smooth.mpg\end{verbatim}
\item \begin{verbatim}http://dynamicspjt.case.edu/~kms15/iris.mpg\end{verbatim}
\item \begin{verbatim}http://dynamicspjt.case.edu/~kms15/iris_isochrons.mpg\end{verbatim}
\end{itemize}
\section{Introduction}

%\subsection{Limit Cycle Oscillators as Models of Central Pattern Generators}
Animals often generate specific 
sequences of behavior, for example the movements of the limbs during walking,
the feeding apparatus while chewing and swallowing, or body undulations in
swimming.  When a repeated sequence of motions can be produced reliably, the
pattern generator circuit controlling the behavior is typically modeled as an autonomous system of ordinary differential equations admitting a stable isolated periodic orbit, \textit{i.e.}~a 
limit-cycle oscillator \cite{Ijspeert:2008:NeuralNet,Wilson1999SpikesBook}. %[cite a review, e.g. one of these? \nocite{Buchli+Righetti+Ijspeert:2006:BiolCyb,CanavierAchuthan:2010:MathBiosci,EisenhammerEtAl:1991:BiolCyb,Gilbert:1982:Biosys} Lamprey? Brown et al? Mackey and Glass? See grant for more refs e.g. Ijspeert, A.J., Central pattern generators for locomotion control in animals and robots: A review. Neural Networks, 2008. 21: p. 642 - 653.   Google: limit cycle oscillator and central pattern generator],
Limit cycle oscillators have played a fundamental role in understanding the generation and control of repetitive motions underlying swimming in the lamprey \cite{BuchananCohen:1982:JNeurophys,CohenErmentroutKiemelKopellSigvardtWilliams:1992:TrendsNsci}, neural activity and bursting \cite{CoombesBressloff2005BurstingBook,Izhikevich2007}, the gaits of quadrupeds  \cite{BuonoGolubitsky:2001:JMathBiol,GolubitskyStewartBuonoCollins:1999:Nature} bipeds \cite{PintoGolubitsky:2006:JMathBiol} and monopeds \cite{BeerChielGallagher:1999:JComputNsci,ChielBeerGallagher:1999:JComputNsci}, as well as cardiac and respiratory activity \cite{De-SchutterAngstadtCalabrese:1993:JNPhys,RubinShevtsovaErmentroutSmithRybak:2009:JNeurophys,Sammon:1994:JApplPhysiol}, \textit{inter alia}.  

%\subsection{Heteroclinic sequences / heteroclinic channels}

Another class of systems generating reproducible sequences of activity has been proposed under the rubric of \emph{stable heteroclinic sequences} \cite{AfraimovichZhigulinRabinovich:2004:Chaos} or \emph{stable heteroclinic channels} \cite{RabinovichEtAl2008PLoS-CB}.  A dynamical system possesses a heteroclinic sequence if there exists a chain of hyperbolic saddle fixed points for which the unstable manifold of each saddle intersects the stable manifold of the next \cite{AfraimovichTristanHuertaRabinovich:2008:Chaos}; they generalize the notion of a stable heteroclinic cycle, an attractor comprising a finite collection of saddle points with heteroclinic connections linking them in a repeating chain \cite{ArmbrusterStoneKirk:2003:Chaos,Guckenheimer+Holmes1990,StoneArmbruster:1999:Chaos,StoneHolmes1990SIAMJApplMath}.
A heteroclinic cycle is \emph{stable} or \emph{attracting} when the product around the cycle of the saddle values is strictly greater than unity; the saddle value for a hyperbolic saddle point with eigenvalues $\lambda_u>0>\lambda_s^1\ge\lambda_s^2\ge\cdots\ge\lambda_s^{n-1}$ is the ratio $-\lambda_s^1/\lambda_u$.  Stable heteroclinic sequences and cycles have been proposed to provide a framework within nonlinear dynamics for understanding a range of phenomena, including olfactory processing in insects \cite{RabinovichHuertaLaurent:2008:Science}, search behavior in the marine mollusk \textit{Clione limacina} \cite{VaronaRabinovichSelverstonArshavsky:2002:Chaos}, ``winnerless competition'' in neural circuits \cite{AfraimovichOrdazUrias2002Chaos,AfraimovichRabinovichVarona2004IntJBifChaos} and ecological models \cite{AfraimovichTristanHuertaRabinovich:2008:Chaos}, genesis of network-dependent bursting activity \cite{NowotnyRabinovitch2007PRL}, and the balance of emotion and cognition in behavioral control \cite{AfraimovichYoungMuezzinogluMehmetRabinovich:2010:BullMathBiol} as well as other areas \cite{RabinovichEtAl2006RMP}.

Because they require the intersection of one dimensional unstable and codimension one stable manifolds, the saddle connections comprising a heteroclinic cycle are structurally unstable \cite{Andronov+Pontryagin:1937}.  For planar systems, Reyn showed that a phase portrait containing a separatrix polygon generically forms a limit cycle when subject to a perturbation satisfying a net inflow condition \cite{Reyn1980-bookchapter}, provided the unperturbed heteroclinic cycle is attracting.  Trajectories near an unperturbed attracting heteroclinic cycle traverse the cycle with longer and longer return times; the trajectory along the cycle itself has ``infinite period".  %Trajectories in perturbed systems forming limit cycles have longer and longer periods the smaller the perturbation.  

 As an example,
consider the system defined on the 2-torus $0\le y_i\le 2\pi$, $i\in\{1,2\}$.  
\begin{subequations}
\label{eq:smooth_sho}
\begin{align}
\frac{dy_1}{dt}&= f(y_1,y_2)=\cos(y_1) \sin(y_2) + \alpha \sin(2 y_1) \\
\frac{dy_2}{dt}&= g(y_1,y_2)=-\sin(y_1) \cos(y_2) + \alpha \sin(2 y_2) 
\end{align}
\end{subequations}
As shown in Figure~\ref{fig:sine_flow_shc}, this system has four saddle points
connected by heteroclinic connections.  The saddles have identical eigenvalues
% kms483 
% was: $\lambda_u=8/15, \lambda_s=-22/15$ for a saddle value of $V=(11/4)^4>1$,
$\lambda_u= 1 - 2\alpha, \lambda_s= -1 - 2\alpha$ for a saddle value of 
$V=\left(\frac{1+2\alpha}{1-2\alpha}\right)^4$.  If $\alpha \in (0, 1/2)$ then 
$V > 1$,
indicating that the heteroclinic 4-cycle is attracting.  

\begin{figure} \centering
\subfloat[]{\includegraphics[width=.45\figwidth]{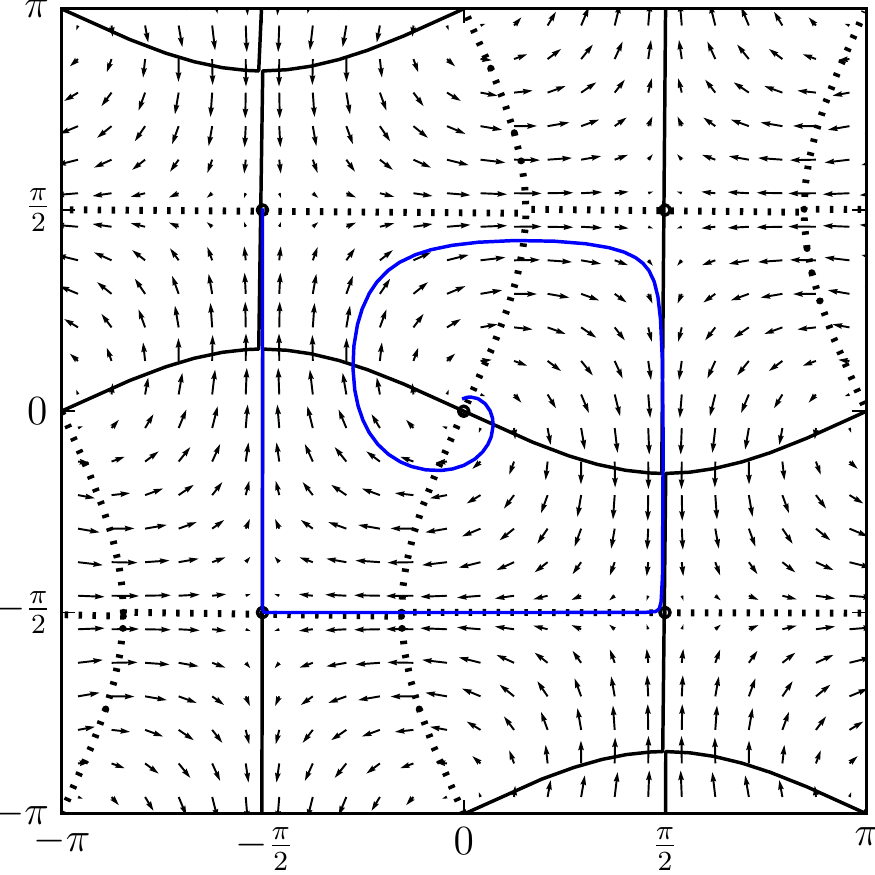}
    \label{fig:sine_flow_shc}}
\hskip 0.5cm
\subfloat[]{\includegraphics[width=.45\figwidth]{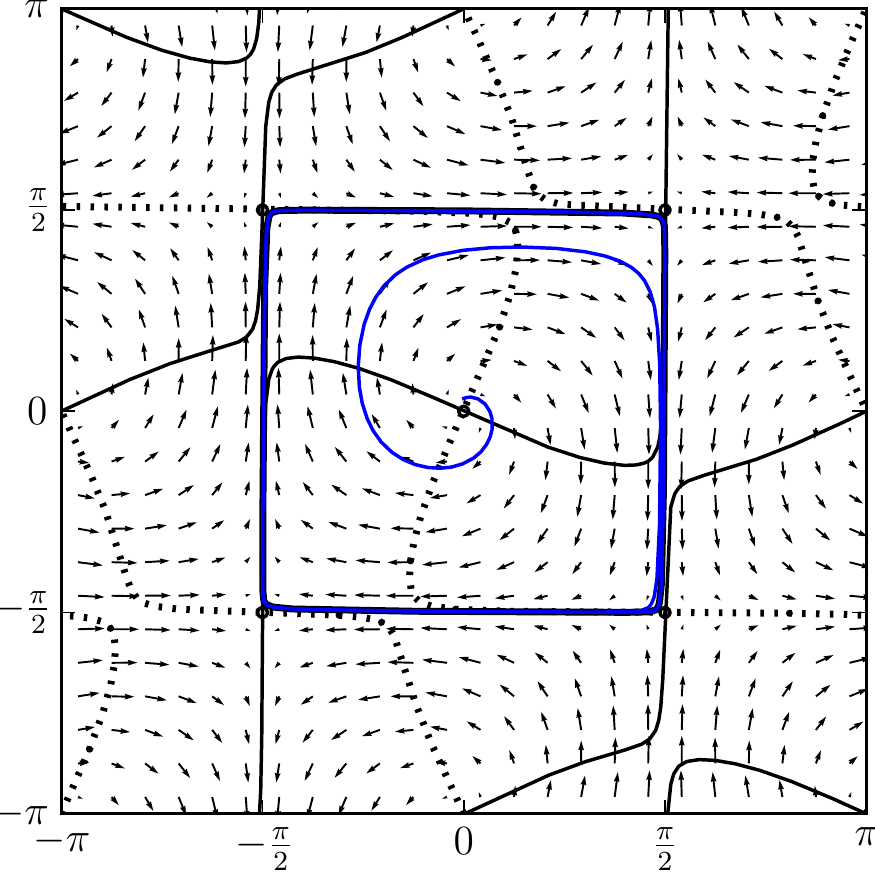}
    \label{fig:sine_flow_lc}}
\caption{Parametric perturbation of an attracting heteroclinic cycle to a stable limit cycle.  Note solid line is x-nullcline and dashed is y-nullcline.
\subref{fig:sine_flow_shc}  A trajectory of the smooth toroidal system given by Equations \ref{eq:smooth_sho}, passing near four distinct saddles each with eigenvalues 
% kms483
% was: $\lambda_s=8/15$ and $\lambda_u=-22/15$.
$\lambda_s=-1 - 2\alpha$ and $\lambda_u=1 - 2\alpha$.
The trajectory shown begins near the unstable spiral point at location $(\pi,\pi)$ near the center
of the plot.  The trajectory was integrated for a total time of 200 units  
% kms483 added
using $\alpha = 7/30$ and $\mu=0$
.  It passes closer to each successive saddle, slowing progressively. 
\subref{fig:sine_flow_lc} The perturbed system, Equations \ref{eq:rotated_smooth_sho}, when
%kms483 add
$\alpha = 7/30$ and
$\mu=0.01$, forms a stable limit cycle passing close to the four saddle points.  Please see corresponding
movie (file \texttt{smooth.mpg}); this animation cycles through phase portraits for the smooth system for values of $\mu$ varying from $0$ to $1/2$ and back again.  At $\mu=2 \alpha$ the limit cycle collapses \textit{via} a Hopf bifurcation to a single stable fixed point located between the four surrounding saddles.}
\label{fig:shc-intro}
%code/shc/xpp/SHC_torus3.ode, r423
%\label{fig:shc-intro}
\end{figure}

%[Motivate effects of perturbations on flow near an SHC?  Of interest both for noise and for deterministic perturbations.]

Although the heteroclinic cycle connecting the four saddle points %illustrated in Figure \ref{fig:sine-flow-shc}, 
is structurally unstable, any perturbation of the vector field that pushes the unstable manifold of each saddle towards the interior of the cycle relative to the stable manifold of the next saddle will generically lead to the formation of a stable limit cycle.
%Nevertheless, small
%perturbations can leave another attractor in its place with similar
%properties, for example a limit cycle that passes very close to each of the
%saddles.  This family of attractors near a stable heteroclinic orbit in
%parameter space is known as a stable heteroclinic channel, 
%% kms483 Q: This isn't quite the definition Rabinovich uses in PLOS 2008
%% (he more or less requires that there exists a set of points that cross through the 
%% neighborhood of each of the saddle points in turn in finite time).
%% is this difference OK?
%and unlike its
%central stable heteroclinic orbit, it is structurally stable.\footnote{(Not
%sure where this fits in).  The planar heteroclinic case generalizes the classic
%example of a planar homoclinic orbit.  If the eigenvalues
%$\lambda_s<0<\lambda_u$ of the homoclinic saddle point satisfy
%$\lambda_s+\lambda_u<0$ then under certain genericity conditions the homoclinic
%system devolves to a system with a unique stable limit cycle under small
%perturbations (\cite{Guckenheimer+Holmes1990}, \S 6.1).  Reyn's theorem
%\cite{Reyn1980-bookchapter} guarantees the existence of limit cycles for
%certain perturbations of polytope separatrices provided (...details...)
%provided the \emph{saddle value}
%$V=\log\left(\prod_{i=1}^n\frac{-\lambda^i_s}{\lambda^i_u}\right)<0$. CHECK }  
For example, consider the effects of ``rotation'' of the flow of 
Equations~\ref{eq:smooth_sho} parametrized by  $1>\mu > 0$.
\begin{subequations}
\label{eq:rotated_smooth_sho}
\begin{align}
%f(y_1, y_2) &= \cos(y_1) \sin(y_2) + \alpha \sin(2 y_1) \\
%g(y_1, y_2) &=-\sin(y_1) \cos(y_2) + \alpha \sin(2 y_2) \\
\frac{dy_1}{dt}&= f(y_1, y_2) + \mu g(y_1, y_2) \\
\frac{dy_2}{dt}&= g(y_1, y_2) - \mu f(y_1, y_2) 
\end{align}
\end{subequations}
%Pontryagin used perturbations of this form as an illustration of structural instability in \cite{Andronov+Pontryagin:1937}.
We will refer to this system as the \emph{smooth system}, in contrast to the piecewise linear system to be introduced subsequently.  As shown in
Figure~\ref{fig:sine_flow_lc}, for $\mu>0$ the heteroclinic connections are broken,
but a limit cycle has been created inside of the saddles that passes near each
of the saddle points.  

For any small positive value of $\mu$, the system \ref{eq:rotated_smooth_sho} will have a stable limit cycle, $\gamma(t)=\gamma(t+T),$ with period $T(\mu)\to\infty$ as $\mu\to 0^+$.  For $\mu>0$ we may
identify a phase $\theta \in[0,\theta_{\mbox{max}})$ with each point on the limit cycle by chosing an arbitrary point $\gamma_0$ to have phase $\theta(\gamma_0)=0$ and requiring $d\theta/dt=2\pi/T(\mu)$. 
Typically one chooses $\theta_{\mbox{max}}$ to equal either $2\pi$ or unity.  Because of the fourfold symmetry of the systems considered here it will be convenient to set $\theta_{\mbox{max}}=4$ throughout, so that traversal of each quarter of the limit cycle will correspond to a unit increment in phase.  Figure \ref{fig:sine_timeplot} illustrates the time course of trajectories of Equations \ref{eq:rotated_smooth_sho} for $\alpha=7/30$ and $\mu\in\{10^{-3},0.1,0.3,0.45\}$.

\begin{figure}[htpb] \centering
\includegraphics[width=\figwidth]{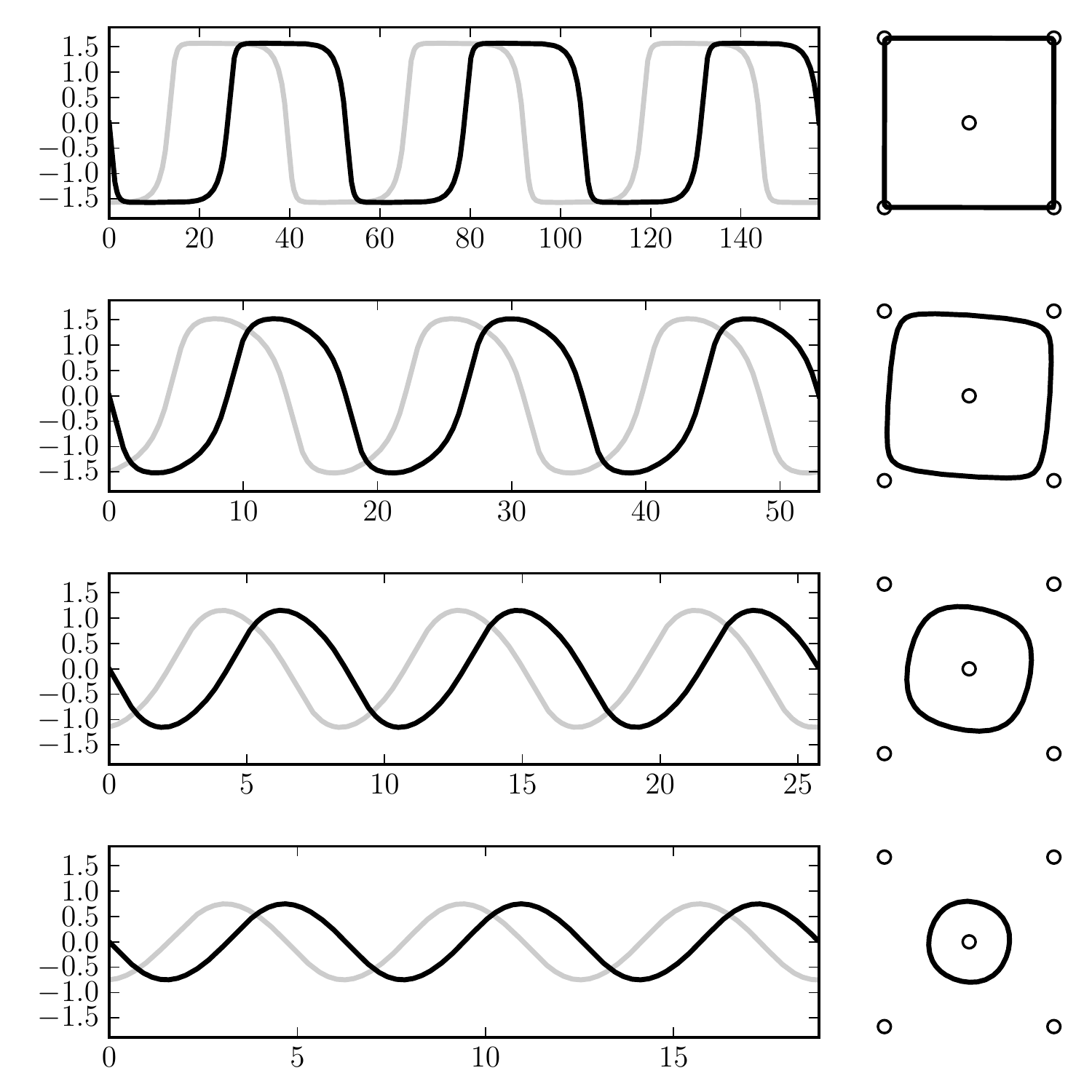}
\caption{
Time plots of limit cycle trajectories of the smooth system with 
various values of $\mu$.  When $\mu \ll 1$ the time plots exhibit prolonged dwell times  due to slow transits past the saddle points and relatively rapid transits between
them.  As $\mu$ increases, the limit cycle moves away from the saddle
points and the speed becomes more uniform (note the change in horizontal
scale).  As a result, the time plots become more sinusoidal resembling an
Andronov--Hopf oscillator.  The first and second ordinates are shown in black 
and gray, respectively, with $\alpha = 7/30$.  From top to bottom 
$\mu= 10^{-3}, 0.1, 0.3, 0.45$. Compare Figure \ref{fig:iris_timeplot}.}
\label{fig:sine_timeplot}
\end{figure}

To each point $\xi_0=(x_0,y_0)$ in the basin of attraction of the limit cycle we may assign an asymptotic phase $\phi(\xi_0)\in[0,4)$ satisfying 
% kms483
% was: $||\xi(t)-\phi(t-\theta(\xi_0)T/2\pi)||\to 0$ 
$||\xi(t)-\gamma(t-\theta(\xi_0)T/4)||\to 0$ 
as $t\to\infty$, where $\xi(t)$ and $\gamma(t)$ are solutions of \ref{eq:rotated_smooth_sho} satisfying initial
conditions  $\xi(0)=\xi_0$ and $\gamma(0)=\gamma_0$ respectively, and $||(x,y)||=|x|+|y|$.
The level curves of $\phi$, called isochrons, foliate the basin of attraction; see \cite{Izhikevich2007} for examples.

An important question for understanding the dynamics of control in central pattern generators is how such systems combine robustness to perturbation with flexibility to adapt behavior to variable environmental or physiological conditions.\footnote{Whether originating endogenously (neural noise, internal control mechanisms), or exogenously (environmental fluctuations), perturbations of the system's dynamics occur on a variety of time scales.  For clarity we will limit discussion to two extremes of fast and slow perturbations.  Perturbations occurring on slow time scales relative to other system dynamics will be referred to below as \emph{static} or \emph{parametric} perturbations, for example fixing a value $\mu>0$ in Equations \ref{eq:rotated_smooth_sho}.  Fast perturbations will be approximated as instantaneous trajectory dislocations.}  
%Perturbations come in a variety of flavors... consider  $$dx/dt=F(x)+\mu G(x) + \eta \, \delta(t-t')$$ the static perturbation $\mu G(x)$ is an abstraction of a slow perturbation, and $\eta\,\delta(t-t')$ the instantaneous perturbation is an extreme of the fast perturbation.
Many biological systems combining repetitive behavior and behavioral or metabolic control exhibit limit cycle behavior that is strongly influenced by passage of trajectories near one or more unstable fixed points or quasiequilibria.  
A recent model for the generation of multiple rhythmic states in a respiratory CPG featured slow excursions along equilibrium surfaces of model neuronsÕ voltage dynamics interspersed with fast jumps between these
surfaces \cite{RubinShevtsovaErmentroutSmithRybak:2009:JNeurophys}.  In this model, control of the rhythm through changes in the period as well as the duration of different functional phases (inspiratory phase, expiratory phase) could be effected by modulatory signals making small changes to the dynamics of escape and release from inhibition, thereby changing the paths of trajectories in the vicinity of quasiequilibria.   Similarly, control of the net speed of motion produced by a model locomotory CPG coupled to an explicit musculoskeletal system resulted from the adjustment of CPG trajectories in proximity to unstable fixed points of the model \cite{SpardyEtAlRubin2010SFN-conf}.  Patterns of biting, swallowing and rejection in \textit{Aplysia} may be understood in terms of sequential traversals between neuromechanical equilibrium points \cite{sutton+etal+chiel:2004:biolcyb,SuttonEtAlChiel:2004:JCompPhys}.
During normal cell growth and proliferation a living cell passes repeatedly through several phases (including cell division), yet the cell ``cycle"  is typically described not as a standard limit
cycle but as a sequence of traversals between quasiequilibria that act as checkpoints \cite{TysonNovak:2001:JTB}.\footnote{Strictly speaking, the system may be viewed as a limit cycle if the dynamics are embedded in a larger space encompassing the control variables as well; the point remains that the periodic behavior is strongly influenced by passage near unstable equilibria or near-equilibria, which function to regulate the timing of the system.}  
%Examples include checkpointing behavior in the cell cycle \cite{TysonNovak:2001:JTB}, in which although the passage of a cell through different phases (including cell division) is reliably repeated \textit{ad infinitum}, the cell ``cycle'' is not necessarily best described as limit cycle behavior, with a regular period and a steadily advancing phase.  It may better be viewed as a sequence of traversals between quasiequilibria.  
In the vicinity of each quasiequilibrium, the cell cycle dynamics slows for an indefinite period of time, until a regulating condition is met \cite{NovakTyson:2008:NatRevMolCellBiol}. %  Cell cycle refs: \cite{TysonNovak:2008:CurrBiol,KapuyEtAlTysonNovak:2009:FEBSlett,NovakTyson:2003:BiochemSocTrans,NovakTyson:2008:NatRevMolCellBiol} [NEED TO SORT THROUGH THESE REFS]
In each of these examples, although the flow strictly speaking forms a deterministic limit cycle, the behavior may be actively managed through the introduction of \emph{variable dwell times} that function as control points along trajectories.

Families of limit cycles verging on a heteroclinic (or homoclinic) cycle such as the limit cycles of the one parameter family of systems given by Equations \ref{eq:rotated_smooth_sho} provide an opportunity for studying the role of saddle points in the control of timing of rhythmic behaviors.  As a first step, it is natural to consider the structure of the infinitesimal phase resetting curves which reflect the sensitivity of the return time along the limit cycle to small instantaneous perturbations.  When $\mu=0$ the flow of Equations \ref{eq:rotated_smooth_sho} does not admit a periodic solution with finite period; consequently the asymptotic phase and hence the phase response is not well defined.  However, for any $\mu>0$ we can defined the phase response, and we can study its behavior as the family of limit cycles approaches the heteroclinic cycle.  

To fix terminology, for $1\gg\mu>0$, consider a trajectory $\xi(t)$ following a stable limit cycle $\gamma:t\in[0,T)\to \gamma(t)\in\R^n$.  Suppose the trajectory is perturbed by a small instantaneous displacement, taking $x(t^-)=\gamma(t+\theta_0T/4)$  to $x(t^+)=\gamma(t+\theta_0T/4)+\vec{r}$. %\marginpar{check $\phi$}
Provided the new initial condition $x(t^+)$ remains within the basin of
attraction of the limit cycle, the orbit will approach the limit cycle with
$||x(t)-\gamma((t+\theta_1T/4)\mod T)||\to 0$ as $t\to\infty$, for some new phase
$\theta_1\in[0,4)$, resulting in a shift in asymptotic phase equal to
$(\theta_1-\theta_0)\mod 4$.  The limit cycle's sensitivity to weak
instantaneous perturbations typically varies both with the phase at which the perturbation occurs, $\theta_0(t)=4((t/T)\mod 1)$,
 and with the direction in which it occurs,
$\eta=\vec{r}/||\vec{r}||\in\R^n$.  Differences
in sensitivity at different phases, which are important for
understanding possible control mechanisms, are captured by 
the  \emph{infinitesimal Phase Response Curve} or \emph{iPRC}, defined as 
\begin{equation}\label{eq:PRC-intro}
Z(\theta,\mu,\eta)=\lim_{\epsilon\to 0} \frac{(\theta-\phi(\gamma(\theta T/4)+\epsilon\eta))\mod 4}{\epsilon}.
\end{equation}
As above, $\phi(\xi)$ is the asymptotic phase associated with a point $\xi$ in the basin of attraction for the limit cycle, $\theta$ is the phase at which the instantaneous perturbation is applied, $\mu$ is the parameter controlling proximity to the heteroclinic, $\eta$ is the direction of the fast perturbation, $\gamma$ is the limit cycle trajectory and $T$ is its period; the latter two entities are functions of $\mu$.
In order to study the behavior of the phase response as the system approaches the heteroclinic configuration, it is important to emphasize the limit defining the iPRC in Equation \ref{eq:PRC-intro} is taken \emph{before} the subsequent limit $\mu\to 0$.

Analytic calculation of the iPRC is typically accomplished \textit{via} an adjoint equation method, and exact solutions are known in very few cases \cite{ErmentroutTerman2010book}.  In order to perform the analysis required to gain qualitative insight into the behavior of the phase response under the sequential limits $\epsilon\to 0$, $\mu\to 0$, we construct a piecewise linear approximation to the smooth system.  The \emph{iris system}, described below, is topologically equivalent on an open set including the family of limit cycles and the saddle points, and qualitatively captures the behavior of the smooth system.  For the iris system we obtain exact results including the form of the limit cycle for positive $\mu$ and an explicit formula for the infinitesimal phase response curve.  Our main result, stated in Theorem \ref{thm:main} below, shows that for the iris system the sensitivity to small displacements \emph{parallel to the direction of the stable manifold} has two regions with distinct sensitivity behavior.  In the limit, as the family of orbits approaches the SHC, there is an interval of phases $\varphi\in[0,\varphi_c]$ for which the infinitesimal phase response goes to zero.  This interval is separated by a critical phase $\varphi_c$ from a second interval $\varphi\in(\varphi_c,1]$ for which the iPRC diverges to $+\infty$.  The junction $\varphi=0\mod 1$ corresponds to the boundary between regions on which we define a piecewise linear dynamics.  We show numerically that qualitatively similar results hold for the smooth system.

%\subsection{Related work}

Limit cycles in piecewise linear (PWL) dynamical system have been studied
previously in several contexts. For instance, in the context of Glass networks \cite{Edwards+Glass:2000:Chaos,GlassPasternack1978BMB,GlassPasternack1978JMB}
PWL systems have been used to represent the dynamics of idealized genetic regulatory systems.  
%Glass and colleagues have extensively studied periodic motions structured by sequences of piecewise linear flows in the context of differential equations representing idealized genetic regulatory systems 
In this case, the
structure is somewhat different from that considered here, in that the fixed point driving the
flow in each piecewise linear region is strictly attracting and lies
outside the region, rather than having one unstable eigendirection and lying inside the corresponding flow region.  Consequently, families of limit cycles verging on a
heteroclinic cycle do not appear in Glass networks.  PWL planar dynamical systems in which a fixed point and a limit cycle coexist do occur in models approximating the Fitzhugh-Nagumo equations; such systems were used to study traveling wave phenomena \cite{McKean1970AdvMath} and period adding bifurcations under periodic forcing \cite{CoombesOsbaldestin:2000:PRE}.   Neither homoclinic nor heteroclinic cycles appear in these systems, however.  
%Homoclinic cycles do occur in PWL versions of the Morris-Lecar equations; 
%Limit cycles in a PWL caricature of the Fitzhugh-Nagumo equations have been studied in the context of traveling waves \cite{McKean1970AdvMath} and period adding bifurcations under periodic forcing \cite{CoombesOsbaldestin:2000:PRE}.  
Recently,  Coombes investigated phase response curves for limit cycles in both the PWL McKean-Nagumo model and a new model related to the Morris-Lecar system \cite{Coombes:2008:SIADS};  this paper exploited the existence of exact solutions for the PRC to study synchronization in gap-junction coupled networks, in both the strong and weak coupling limits.  The PWL Morris-Lecar system does contain a homoclinic bifurcation;
to the best of our knowledge, however, the analysis presented here is the first to obtain exact results for the scaling of the infinitesimal phase response curve, as a system of limit cycles approaches a heteroclinic or homoclinic orbit.

\section{The piecewise linear iris system}
\label{ssec:iris-definition}

\begin{figure}[htbp] \centering
\subfloat[]{\includegraphics[width=.4\figwidth]{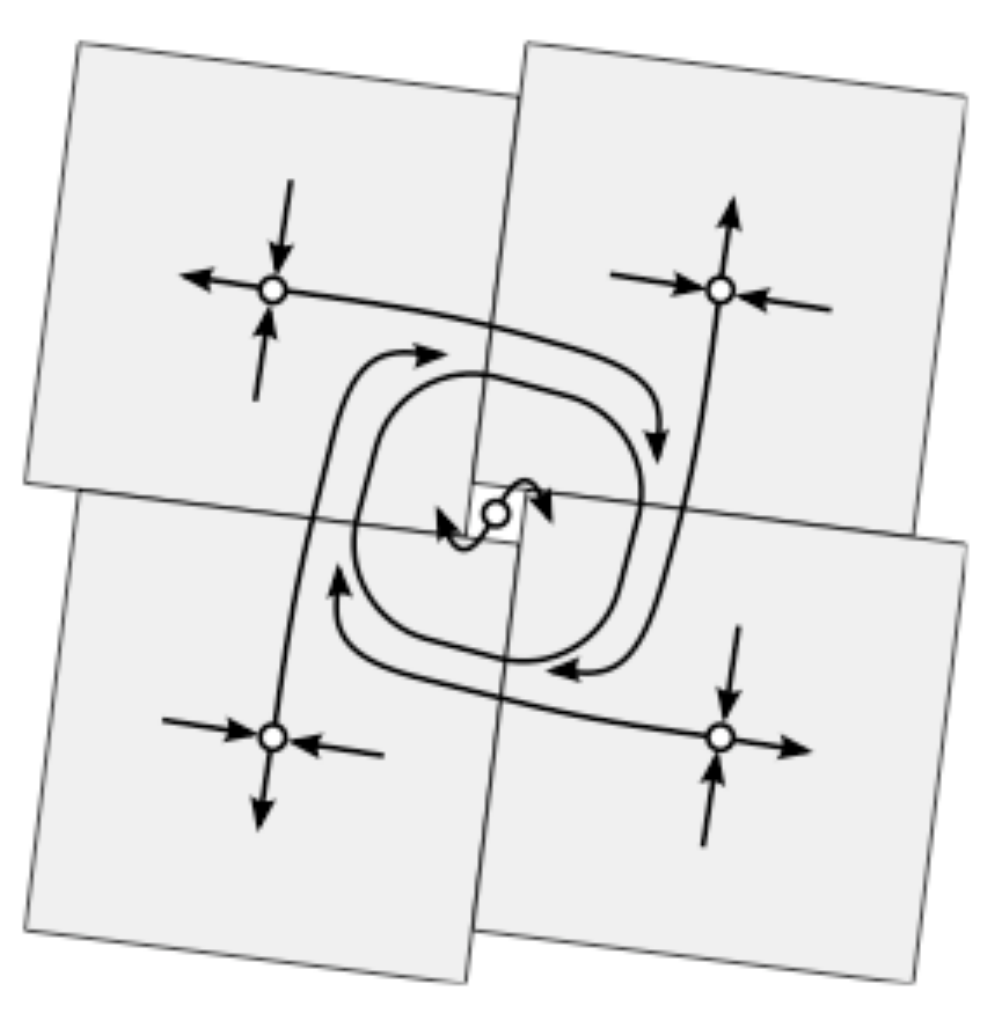}
    \label{fig:sine_to_iris_fig_sine}}
\hskip 0.5cm
\subfloat[]{\includegraphics[height=.4\figwidth]{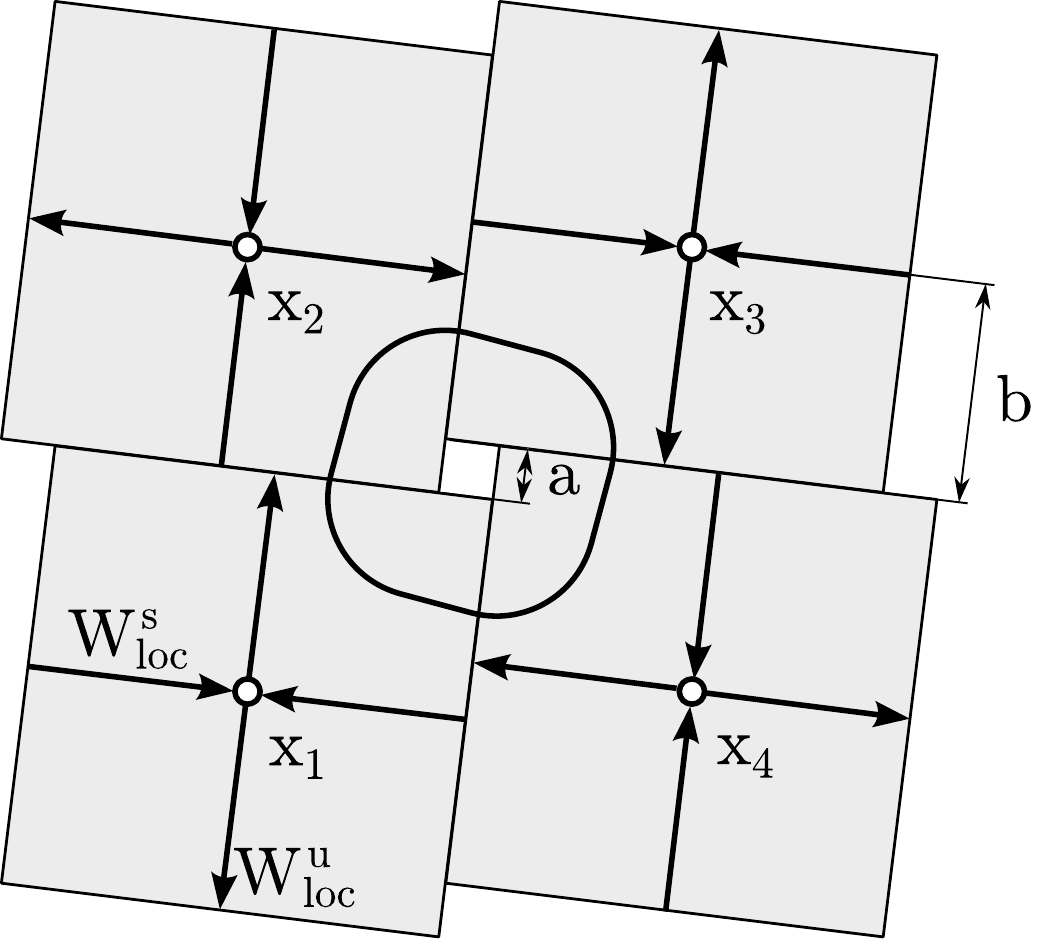}
    \label{fig:sine_to_iris_fig_iris}}
\caption{Construction of the iris system from the smooth system.  
\subref{fig:sine_to_iris_fig_sine} The flow near each saddle point of the smooth system 
(\ref{eq:rotated_smooth_sho}) is approximately linear in a square region aligned with the
(orthogonal) eigenvectors.  By virtue of the fourfold symmetry of the system we can extend
each square until it has a side of length $2b$, remaining centered on the saddle.  Depending on the 
extent of the rotational (parametric) perturbation of the vector field, there will be square of side $a$ forming a gap
around the unstable spiral point.  
\subref{fig:sine_to_iris_fig_iris} We extend the linearized flow for each saddle throughout the corresponding square of side $2b$, creating a piecewise linear vector field defined on the plane with the squares of side $a$ removed.  
The $i^{th}$ square is centered on the $i^{th}$ saddle, $x_i$ ($i=1,2,3,4$).  In each square
the inward arrows indicate $W^s_{loc}$ and the outward arrows indicate $W^u_{loc}$, parallel
to the stable and unstable eigenvector directions, respectively.
%To explore the dynamics of a stable heteroclinic orbit and the surrounding limit cycles, we start with four saddle points with orthogonal eigenvectors as in Figure~\ref{fig:shc-intro}.
  %We then linearize the flow in a square around each saddle point, with the square aligned with to eigenvectors.  These squares can then be extended until their edges touch.  In the case where the flow has been rotated to create a limit cycle, the edges will connect with each other with an offset as shown. 
  }
\label{fig:sine_to_iris}
\end{figure}
\begin{figure}[htbp] \centering
\subfloat[]{\includegraphics[width=.4\figwidth]{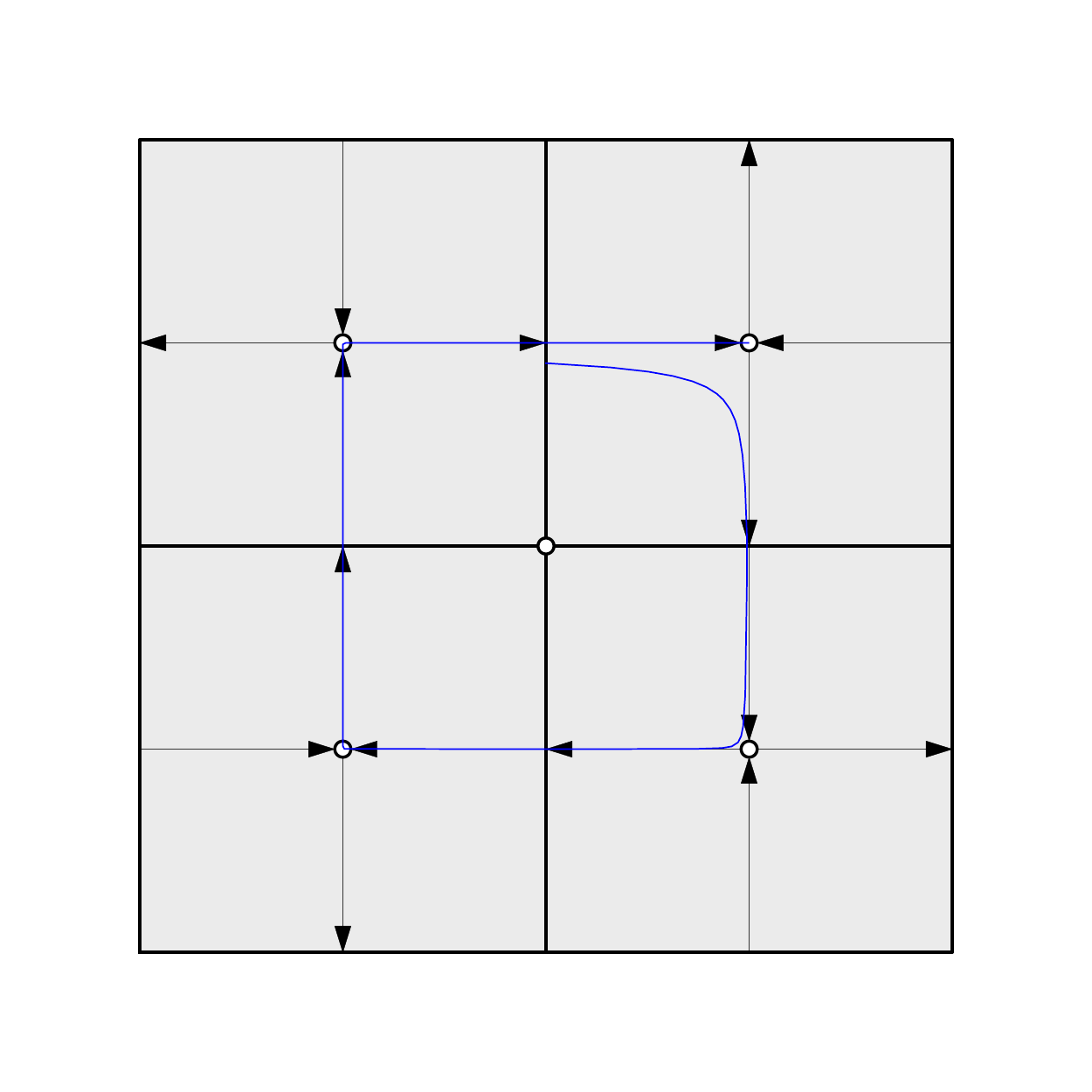}\label{iris_shc}}
\subfloat[]{\includegraphics[width=.4\figwidth]{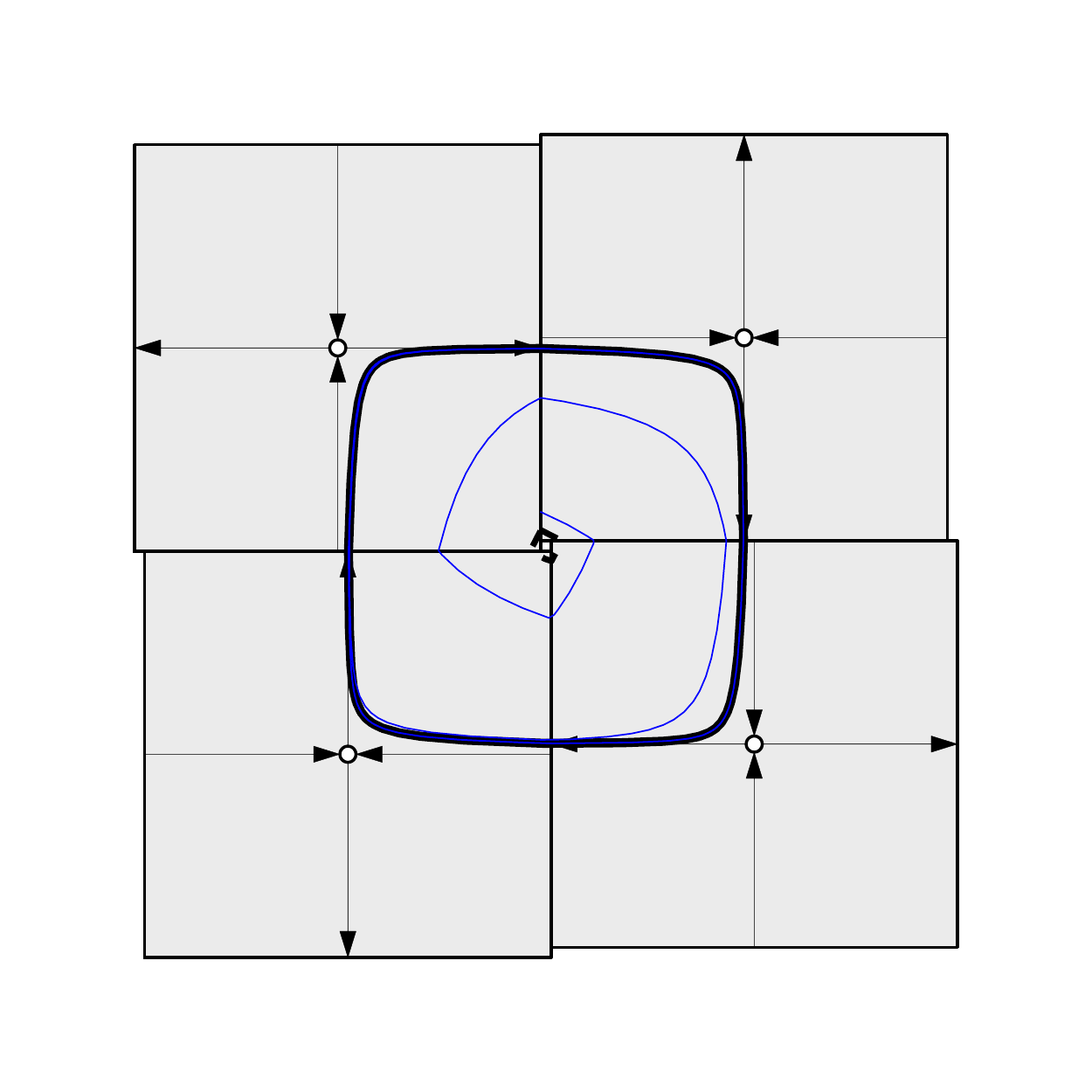}\label{iris_lc0}}\\
\noindent
\subfloat[]{\includegraphics[width=.4\figwidth]{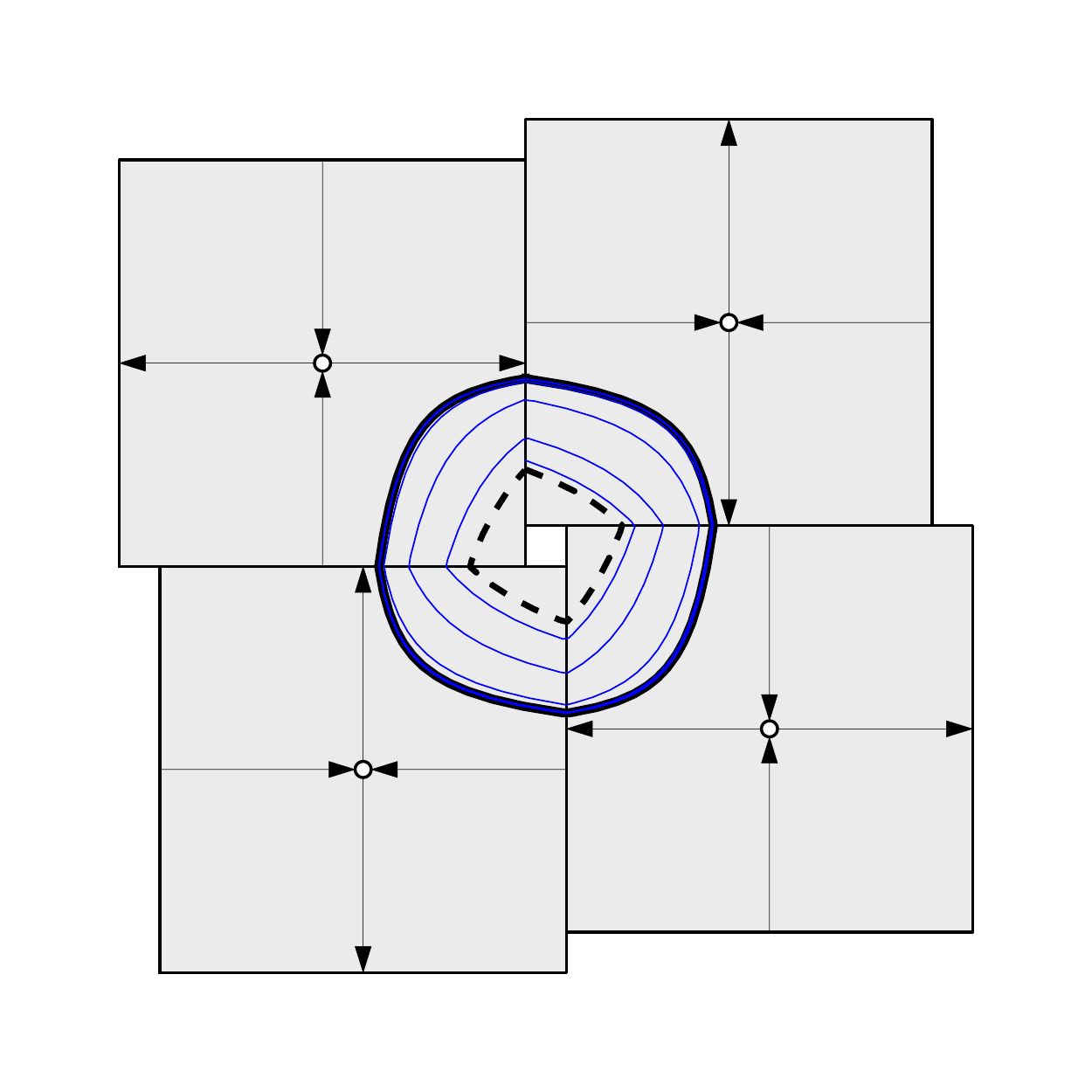}\label{iris_lc1}}
\subfloat[]{\includegraphics[width=.4\figwidth]{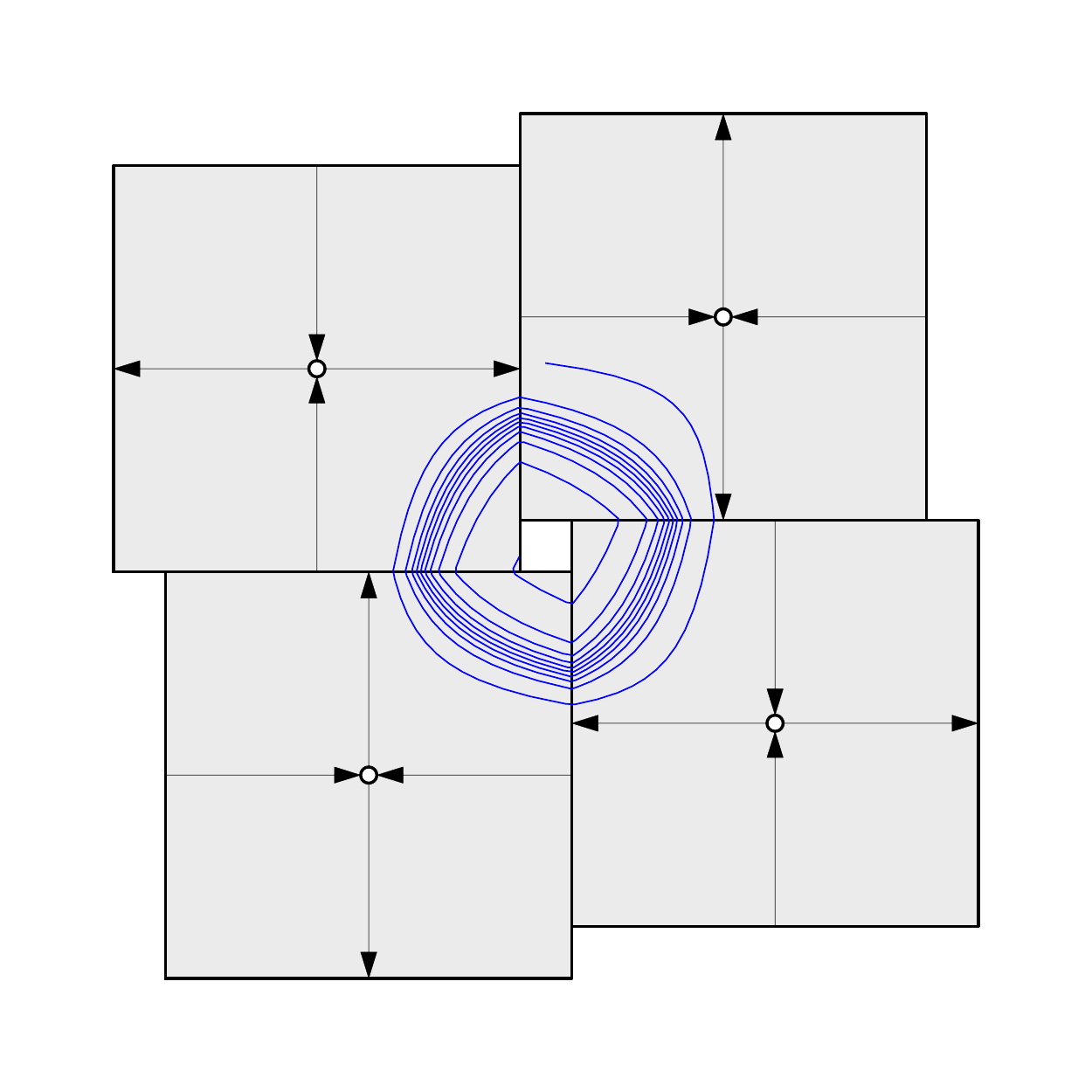}\label{iris_fold}}
\caption{
The behavior of the iris system depends on the offset $a$.  
The system is shown rotated through an angle of $-\tan^{-1}(a/b)$ for easier visual 
comparison with the smooth system.
\subref{iris_shc}~%
[$a=0$]
When the offset $a$ is $0$, the unstable manifold of each saddle connects to
the stable manifold of the next, forming a stable heteroclinic orbit.  (Thin 
blue line: sample trajectory.)
\subref{iris_lc0}~%
[$a=0.05$]
When $0 < a \ll 1$, a stable limit cycle exists that passes close to the 
saddles.  Its basin of attraction extends inward to a small,
unstable limit cycle around the center (dotted black line).  
\subref{iris_lc1}~%
[$a=0.2$]
As $a$ increases, the limit cycle becomes more rounded and the flow along the 
limit cycle more regular.  
\subref{iris_fold}~%
[$a=0.255$]
If $a$ continues to grow, the stable limit cycle is destroyed in a fold 
bifurcation with the inner unstable limit cycle. Please see corresponding movie (file: \texttt{iris.mpg}); this
animation cycles through phase portraits
of the iris system for values of $a$ ranging from 0 to 0.255 and back again.   
}
\label{iris_fig}
\end{figure}

As $\mu\to 0^+$, the period of the limit cycle in system \ref{eq:rotated_smooth_sho} diverges and the asymptotic phase may no longer be defined.  Nevertheless, the response of the system to small, transient perturbations remains of interest.  To explore behavior analogous to phase resetting in the $\mu\to 0^+$ limit, we introduce a piecewise linear planar dynamical system analogous to the smooth system in
Equations~\ref{eq:smooth_sho}.  Figure  \ref{fig:sine_to_iris_fig_sine} illustrates the construction: we tile the plane with large squares of size $2b$
centered on each saddle and smaller squares of size $a$ between them.  When the rotation parameter $\mu$ is zero, the large squares align and the square of side $a$ vanishes.  As the vector field ``rotates'' in the vicinity of each saddle, the squares rotate and $a$ increases from zero.  Within each square, we introduce coordinates $\xi=(s,u)$ with respect to which the local flow obeys $ds/dt=-\lambda s$ and $du/dt=u$.  Here $\lambda>0$ 
% kms483 Q: Should this be \lambda > 1, or are we allowing \lambda \le 1 so that we
% later prove that it must be greater than 1 for stability of the limit cycle?  
is the eigenvalue corresponding to the eigenvector $(1,0)$ tangent to the stable manifold $W^s_{loc}=\{(s,0)| -b<s<b\}$. The unstable manifold  $W^u=\{(0,u)| -b<u<b\}$ is tangent to the eigenvector $(0,1)$ corresponding to the unstable eigenvalue, which without loss of generality is set to one.  
%The edges of the larger squares are aligned with the orthogonal eigenvectors of the saddle they contain.  If we then linearize the flow in each of the larger squares, we can construct a piecewise linear flow as shown in Figure~\ref{fig:sine_to_iris_fig_iris}.  
The stable and unstable eigenvectors are arranged so that when $a>0$ the flow on the inner quadrants of each square moves clockwise; the flow leaving the square around the $i^{th}$ saddle enters the square around the $i+1^{st}$ saddle (mod 4), as shown in Figure \ref{fig:sine_to_iris}.

Formally, for $k\in\{1,2,3,4\}$ let the center of the $k^{th}$ square, $(x_k,y_k)$, be given by the real and imaginary parts, respectively, of 
% kms483
% was: $z_k=i^k(be^{-i\pi/4}+ae^{i\pi/4})$, 
$z_k=\sqrt{2} (-i)^k (be^{-i\pi/4}+(a/2) e^{i\pi/4})$, 
% This sign change puts the first square in the lower left quadrant and increases k 
% clockwise, which is more consistent with the rest of the paper.
%
where $0\le a<b$.  Define the $k^{th}$ square to be $S_k=\{(x,y):\,|x-x_k|\le b\,\,\,\&\,\,\,|y-y_k|\le b\}$. In the interior of the first square, the flow satisfies $\dot{x}=-\lambda (x-x_1)$ and $\dot{y}=y-y_1$.  The flow in the interior of squares two through four is defined so
that the vector field is equivariant with respect to fourfold rotation about the origin.  Adjacent squares form a boundary of length $(2b-a)$.  At the boundary between $S_k$ and $S_{k+1}$ (mod $k$), the flow leaves $S_k$ and enters $S_{k+1}$, by construction.  The vector field is discontinuous across these boundaries.  For definiteness, we take the flow at points on the mutual boundary $S_k\cap S_{k+1}$ of adjacent squares  to be defined so that the vector field is continuous when approached from square $S_{k+1}$, the square into which the trajectories enter. 

The four offset squares may be repeated to form a partial tiling of the plane, leaving a complementary set composed of smaller squares of size $a$.  We will view the entire system as confined to the 2-torus, however.  We leave the flow unspecified in the interiors of the small squares, and we take any egress points from a large square into a small square to be absorbing, \textit{i.e.}~the flow is set to zero at the boundary of the small squares.  Since we are interested in the phase response of limit cycles whose basins of attraction are entirely contained within the larger squares, the flow in the smaller squares is of no consequence.
Figure \ref{iris_fig} illustrates the system, and shows sample trajectories for different values of $a$.
We will
refer to the piecewise linear system as the \emph{iris system} because it resembles 
the iris of a camera, opening and closing as $a$ increases or decreases.

It is clear that the iris system will form a stable 
heteroclinic orbit when $a=0$ (corresponding to $\mu=0$ in the smooth system);
we will show in \S~\ref{limit_cycle_sec} that it also exhibits stable
limit cycles passing near the saddles for small values of $a > 0$
(corresponding to $\mu>0$) as shown in Figure~\ref{iris_fig}.  
When the limit cycle exists, as above, we assign a phase $\theta$ to each point $\gamma$ in 
the cycle as the fraction of the cycle's period (scaled by $\theta_{\mbox{max}}=4$) required 
to reach that point from a defined starting point $\gamma(0)=\gamma_0$ on the cycle, \textit{i.e.}
\begin{equation}\label{phase_def}
\theta(\nu) = \frac{4}{T} \min\bigl( \{t > 0 \mathrel| \gamma(t) = \nu\} \bigr)
\end{equation}%\footnote{QQQ To check: this definition is equivalent to the one above?  Do we need both?}
where $T$ is the period, \textit{i.e.}~$T = \min\bigl(\{t > 0 \mathrel| \gamma(t) = \gamma_0\}\bigr)$.
It is clear that with this transformation $d\theta/dt$ is constant and equal
to $4 / T$.  
We define the point where the limit cycle first enters the bottom left 
square (surrounding saddle $S_1$) as $\gamma_0$.%\footnote{can we label $\gamma_0$ on the figure?}.  

Because the flow is piecewise linear, we may derive analytically the form of the phase response curve for any values of $\lambda>1>a>0$ for which a stable limit cycle exists.

\begin{thm}\label{thm:main}
Let $\lambda>1>a>0$, let the iris system be defined as in \S \ref{ssec:iris-definition}, and
define the function 
\begin{equation}
\rho(u)=u^{\lambda}-u+a.
\label{eq:roots-function}
\end{equation}
\begin{enumerate}
\item If the function $\rho$
has two isolated positive real roots then the iris system has a stable limit cycle.  Let $u$ denote the smallest positive real root of \ref{eq:roots-function}.  The limit cycle trajectory enters each square at local coordinates $(1,u)$ and exits each square at local coordinates $(s,1)$ where $s=u^{\lambda}$.   The period of the limit cycle is $T=4\log(1/u)$.  
\item Let $\theta \in [0,4)$ be the phase of an instantaneous perturbation in direction $\eta$ and let $\theta=k+\varphi$ where $\varphi\in[0,1)$ %represents the phase rescaled to match a quarter limit cycle, 
and $k\in\{0,1,2,3\}$. If $k=0$ then the infinitesimal phase response of the limit cycle is 
\begin{equation}
\label{eq:PRC-thm}
Z(\eta,\varphi,a) = \frac{\eta\cdot\beta(\varphi)}
{\log(1/u)(u-\lambda s)}
\end{equation}
where $\beta(\varphi)=\left(s^{(1-\varphi)},u^\varphi \right)$ and $\eta=(\eta_s,\eta_u)$ is a unit vector in the $L_1$ norm.  If $k\in\{1,2,3\}$ then the infinitesimal phase response is given by $Z(\eta^\prime,\varphi,a)$ where $\eta'=\mathcal{R}^k\eta$ and $\mathcal{R}=\matrix{rr}{0&-1\\1&0}$. % effects a quarter rotation.  
The magnitudes of the phase responses to perturbations parallel to the stable and unstable eigenvector directions is greatest at phases corresponding to boundaries between piecewise linear regions.
\item As $a\to 0$ for fixed $\lambda$, the entry coordinate scales as $u=a+o(a)$, the infinitesimal phase response to perturbations parallel to the unstable eigendirection in a given square diverges, and the response to perturbations parallel to the stable eigendirection diverges when $\varphi  \in (1-1/\lambda,1)$; otherwise, it converges to zero. 
\end{enumerate}
\end{thm}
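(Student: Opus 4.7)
The plan is to exploit the piecewise-linear structure throughout: integrate the local flow explicitly to obtain a scalar Poincar\'e map across one square, solve the adjoint equation in closed form, fix its multiplicative constant by the standard iPRC normalization $Q\cdot f=d\theta/dt$, and use the fourfold symmetry to propagate the result around the cycle. For part~1, working in the local coordinates of a single square the decoupled flow $\dot s=-\lambda s$, $\dot u=u$ sends an entry point $(1,u_0)$ on the stable face to $(u_0^{\lambda},1)$ on the unstable face in transit time $t^{*}=\log(1/u_0)$. The geometric offset $a$ and the fourfold equivariance identify this exit with the entry point $(1,u_0^{\lambda}+a)$ in the next square's frame, giving a one-dimensional Poincar\'e map $P(u_0)=u_0^{\lambda}+a$ whose fixed points are precisely the positive roots of $\rho$. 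Since $\rho(0)=a>0$ and $\rho$ is convex on $(0,\infty)$ for $\lambda>1$, the existence of two positive roots is equivalent to $\min\rho<0$; at the smaller root $\rho'(u)=\lambda u^{\lambda-1}-1<0$, so $P'(u)<1$ and the orbit is asymptotically stable, with period $T=4t^{*}=4\log(1/u)$ by symmetry.

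For part~2, the adjoint equation $dQ/dt=-J^{\top}Q$ inside one square, with $J=\mathrm{diag}(-\lambda,1)$, gives $Q_s(t)\propto e^{\lambda t}$ and $Q_u(t)\propto e^{-t}$; converting to phase via $t=\varphi\log(1/u)$ and using $s=u^{\lambda}$ puts these in the form $C\,s^{1-\varphi}$ and $C\,u^{\varphi}$. The scalar $C$ is pinned by evaluating $Q(0)\cdot(-\lambda,u)=4/T=1/\log(1/u)$ at the entry point, producing $C=1/[\log(1/u)(u-\lambda s)]$; a direct check at $\varphi=1$ confirms the normalization is preserved along the whole segment, as it must be for an adjoint solution of a linear flow. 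Fourfold rotational equivariance then says that a global perturbation $\eta$ applied in the $k$th square enters the same formula after rotation to local coordinates by $\mathcal{R}^{k}$. Monotonicity of $s^{1-\varphi}$ (increasing) and $u^{\varphi}$ (decreasing) on $[0,1]$ places the extrema of the purely-stable and purely-unstable responses at the boundary phases $\varphi\in\{0,1\}$.

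For part~3, writing $\rho(u)=0$ as $u=a+u^{\lambda}$ and using $\lambda>1$ gives $u^{\lambda}=o(u)$, so $u=a+o(a)$, $s=u^{\lambda}=a^{\lambda}(1+o(1))$, and the denominator $\log(1/u)(u-\lambda s)\sim a\log(1/a)$. For $\eta$ parallel to the unstable eigendirection, $u^{\varphi}\sim a^{\varphi}$ gives $Z\sim a^{\varphi-1}/\log(1/a)\to\infty$ for every $\varphi\in[0,1)$; for $\eta$ parallel to the stable eigendirection, $s^{1-\varphi}\sim a^{\lambda(1-\varphi)}$ gives $Z\sim a^{\lambda(1-\varphi)-1}/\log(1/a)$, which diverges exactly when $\lambda(1-\varphi)-1<0$, i.e.\ when $\varphi\in(1-1/\lambda,1)$, and otherwise converges to zero. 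The main technical obstacle I anticipate is the boundary bookkeeping: one must verify from the explicit centers $z_k$ and the local eigenframe orientations that the transition law is indeed $u_0\mapsto u_0^{\lambda}+a$, and that the convention $\eta\mapsto\mathcal{R}^{k}\eta$ encodes the correct saltation of the iPRC across the vector-field discontinuity between adjacent squares. Once that consistency is established, the three parts reduce to the calculations above.
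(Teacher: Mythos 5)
Your parts 1 and 3 are correct and essentially coincide with the paper's own arguments: part 1 is the same Poincar\'e-map proof (the paper's Lemmas \ref{lc_existence_lem} and \ref{lem:stability}, via the map $h(u_0)=u_0^\lambda+a$ and convexity of $\rho$; your observation that $\rho'(u)<0$ at the smaller root gives $P'(u)<1$ is a slightly slicker version of the paper's comparison with $(u_i)_{\min}=\lambda^{1/(1-\lambda)}$, and you should also note $\rho(1)=a>0$ so the roots lie in $(0,1)$), and part 3 is the same scaling analysis as Lemma \ref{lem:asymptotics1} and \S\ref{sec:iris-PRC-asympt}, comparing the exponents $a^{\varphi-1}$ and $a^{\lambda(1-\varphi)-1}$ against $\log(1/a)$. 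For part 2, however, you take a genuinely different route: the paper never solves the adjoint equation, but instead propagates an arbitrary instantaneous perturbation through successive edge crossings and sums the resulting geometric series of dwell-time offsets (Lemmas \ref{first_pass_lem}, \ref{lem:subsequent-effects}, \ref{lem:cumulative-change}), obtaining $\Delta C_\infty^\prime$ and hence $Z$ directly, with the boundary transition law built in from the start; the adjoint interpretation of $\beta$ appears in the paper only as an after-the-fact remark.

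There is a genuine gap in your adjoint derivation as written. Inside one square the adjoint system $dQ/dt=-J^{\top}Q$ has the two-parameter family of solutions $Q(t)=(Ae^{\lambda t},\,Be^{-t})$, and along the cycle ($s(t)=e^{-\lambda t}$, $u(t)=u\,e^{t}$) the pairing $Q\cdot f=-\lambda A+uB$ is constant for \emph{every} choice of $(A,B)$. Hence the normalization $Q(0)\cdot(-\lambda,u)=4/T$ fixes only the single combination $-\lambda A+uB=1/\log(1/u)$, one equation in two unknowns, and your ``direct check at $\varphi=1$'' has no discriminating power whatsoever. Your ansatz $Q\propto\bigl(s^{1-\varphi},u^{\varphi}\bigr)$ silently imposes the ratio $A/B=s$, which is precisely the nontrivial content of the theorem: it encodes the direction of the phase gradient at entry, and it can only come from requiring the adjoint solution to be periodic around the cycle subject to the correct jump conditions at the switching boundaries. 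Concretely, since the asymptotic phase is continuous across the exit edge and only its normal derivative can jump at the field discontinuity, the tangential component must match; with the fourfold symmetry this reads $Q_s(T_1^-)=Q_u(0^+)$, i.e.\ $Ae^{\lambda T_1}=B$, giving $A/B=u^{\lambda}=s$, and only then does the normalization produce $C=1/[\log(1/u)(u-\lambda s)]$. You flag ``boundary bookkeeping'' as a technical obstacle, but you locate it in the $\eta\mapsto\mathcal{R}^{k}\eta$ convention for $k\ge 1$; the crux is actually the determination of $Q$ within the \emph{first} square, which your proposal assumes rather than derives (a fully rigorous adjoint route would also have to justify that the gradient of asymptotic phase exists for this discontinuous piecewise-linear flow, something the paper's direct computation establishes by exhibiting the limit explicitly).
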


%As an immediate consequence, we note the following
%\begin{rem}As $a\to 0$ the $\theta$-dependence of the phase response to perturbation in the $i^{th}$ direction, $V_i$,  approaches a delta function [in a weak limit?  how?] with its weight concentrated at one or more saddle points. \end{rem}

%As the iris system approaches the heteroclinic bifurcation, or $a\to 0$, the intersection $u$ of the limit cycle with the boundary of each square approaches zero; the period of the limit cycle $T$ diverges as $\log(1/u^4)$, and the phase response curve of shows arbitrarily large sensitivity to perturbations in the immediate vicinity of each saddle point, and decreasing sensitivity at all intermediate points.  PJT -- I no longer believe this statement!

Figure \ref{fig:iris_timeplot} illustrates the time course of trajectories of the iris system for $\lambda=2$ and
 $a\in\{10^{-3}, 0.1, 0.2, 0.24\}$. 
 \begin{figure}[htpb] \centering
\includegraphics[width=\figwidth]{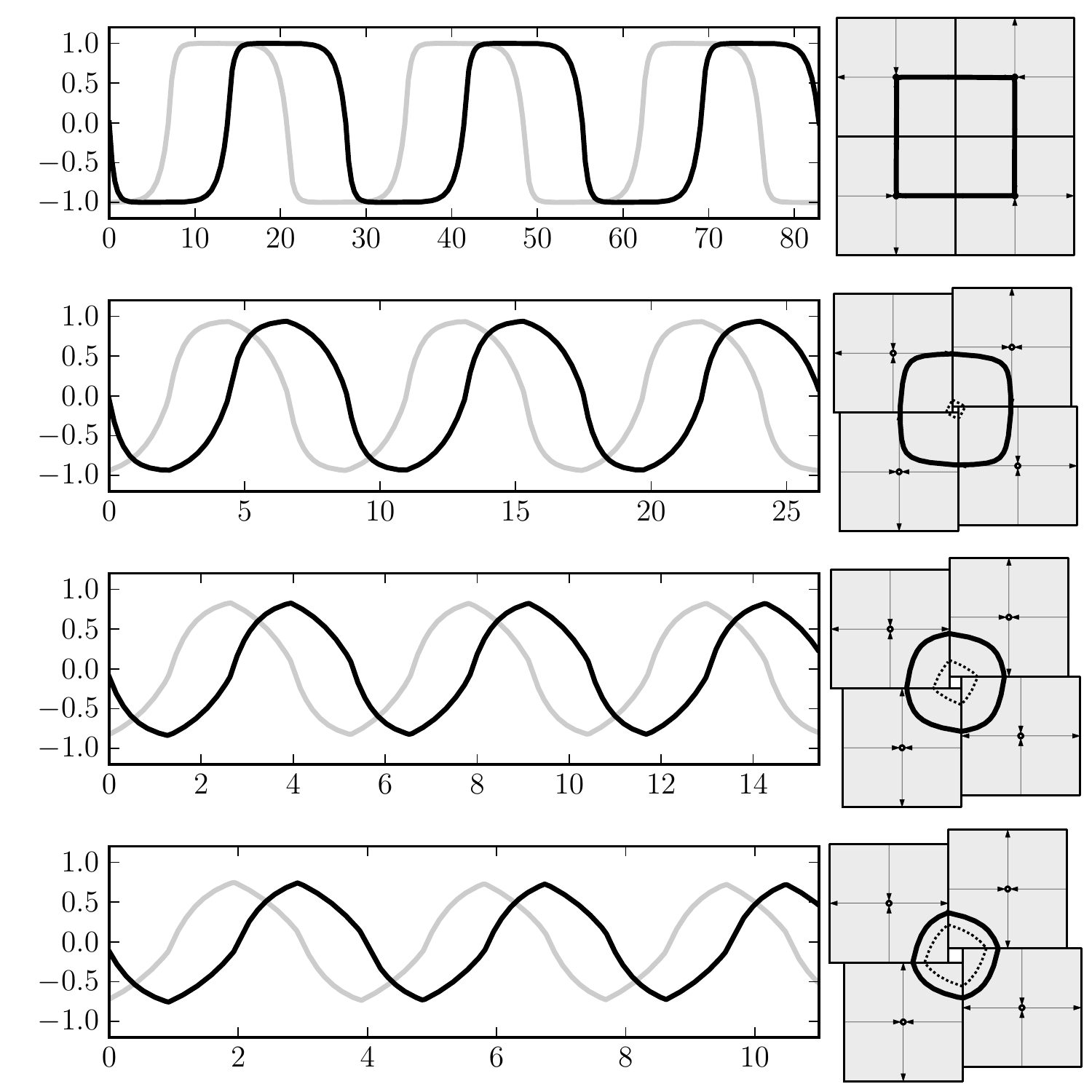}
\caption{
Time plots of limit cycle trajectories of the iris system with 
various values of $a$.  When $a \ll 1$ the trajectories slow dramatically 
when passing near the saddles and travel more quickly between them, resulting 
in time plots with prolonged dwell times.  As $a$ increases, the limit cycle 
moves away from the saddle points and the limit cycle trajectory becomes 
faster and more uniform in speed, resulting in time plots with a more
sinusoidal character.  Note the change in horizontal scale.  The horizontal 
and vertical ordinates are shown in black and gray, respectively, with 
$\lambda = 2$.  From top to bottom $a = 10^{-3}, 0.1, 0.2, 0.24$.  Compare Figure \ref{fig:sine_timeplot}.}
\label{fig:iris_timeplot}
\end{figure}

While a general consideration of phase resetting in the vicinity of a heteroclinic bifurcation on an invariant circle (or, similarly, near a homoclinic bifurcation) is beyond the scope of this paper, it is natural to conjecture that the limiting behavior of phase response curves near such bifurcations may be similar to that observed here. %, as the dynamics near the limit cycle are increasingly sensitive to perturbations occurring when the trajectory is close to a saddle point (or saddle remnant) and insensitive elsewhere.  

Lemmas \ref{lc_existence_lem} and \ref{lem:stability} in \S \ref{sec:iris} provide necessary and sufficient conditions to guarantee the existence of a stable limit cycle.  Lemmas \ref{first_pass_lem}, \ref{lem:subsequent-effects} and \ref{lem:cumulative-change} in \S \ref{sec:local} develop the response of a limit cycle trajectory to a small transient perturbation, leading to direct calculation of the infinitesimal phase response curve. Lemmas \ref{lem:asymptotics1} and \ref{lem:alpha-asympt} in \S \ref{sec:iris-PRC-asympt} describe the asymptotic behavior of the iris system in the heteroclinic limit, \textit{i.e.}~as $a\to 0$.  In \S \ref{sec:iris-PRC-asympt} we also compare analytic and numerical results for the phase response curves of the iris system.  In \S \ref{sec:isochrons} we numerically explore the isochrons of the iris system.  Finally in \S \ref{sec:smooth} we numerically obtain phase response curves for the smooth system given by Equations \ref{eq:smooth_sho} and compare their structure with those of the iris system.

%Glass and colleagues have extensively studied periodic motions structured by sequences of piecewise linear flows in the context of differential equations representing idealized genetic regulatory systems  (Glass networks QQQ check these refs for appropriateness) \cite{Bagley+Glass:1996:JTB,Edwards+Glass:2000:Chaos,Perkins+Wilds+Glass:2010:PhilTransA}.  In this setting each piecewise linear region corresponds to flow towards an attracting fixed point (rather than a saddle) lying outside a given region of the flow (rather than inside as in the iris system).  We are not aware of a similar analysis to that conducted here having been undertaken for general Glass networks.  

In \BMH Brown \textit{et al.}~studied phase response curves for limit cycles near the four codimension one bifurcations leading to periodic firing in standard neuronal models (saddle-node bifurcation of fixed points on a periodic orbit; supercritical Hopf bifurcation; saddle-node bifurcation of limit cycles; homoclinic bifurcation).  Their analysis of the homoclinic bifurcation corresponds to our analysis of the iris system in a certain limit; for a detailed comparison see \S \ref{ssec:homoclinic}.

% kms483
% was \section{The Iris System}
\section{Limit Cycles in the Iris System}
\label{sec:iris}

We now prove several results about the iris system that we will need to prove
Theorem~\ref{thm:main}. We start by studying the trajectory within one of the
square regions to construct a map from the time and position of entry into the
region to the time and position of egress out of the region.  Next, we connect
four of the linearized regions together.  We then prove the existence of a
limit cycle for sufficiently small values of $a > 0$.  We also examine the
effects of a perturbation of the trajectory within the neighborhood on the exit
time and exit position.  We use the maps thus derived to show that a
stable limit cycle exists, and use the perturbation results to determine
the asymptotic effect of a small perturbation on the phase of the oscillator
(\textit{i.e.}~the phase response curve).  
First, we will examine the dynamics within a single linear region around a 
saddle of the iris system.   %Each saddle has two real orthogonal eigenvectors, one stable and one unstable.  

\begin{rem}[Nondimensionalization]
Assume each saddle of the iris system has two real orthogonal eigenvectors, one stable and one unstable.  
Assume the region around each saddle is a square aligned with 
these vectors and centered on the saddle with a length of $2 L$ along each 
side (so that the saddle is a distance of $L$ away from each edge).  
We will refer to the unstable eigenvalue of the saddle as $\lambda_u>0$ and the
stable eigenvalue as $\lambda_s<0$.  The trajectories within the region have the
dynamics
\begin{align}
    \frac{ds}{dt} & = \lambda_s s, &
    \frac{du}{dt} & = \lambda_u u.
\end{align}
Assume that when a trajectory leaves the edge of one region at position 
$x_f = (s_f, u_f) = (s_f, L)$, it enters the next region with an offset of 
$a$, i.e. $x_i = (s_i, u_i) = (L, s_f + a)$.  We assume that there is a cycle
of four regions the trajectory may traverse this way.  

We can nondimensionalize the system by defining the new state variables 
$s^{*} = s/L$ and $u^{*} = u/L$, by rescaling time as $t^{*} = \lambda_u t$
and the offset as $a* = a/L$.  We may define $\lambda = -\lambda_s/\lambda_u$
as the saddle ratio, which will play a critical role in stability
(\S\ref{limit_cycle_sec}).  Using these definitions, the governing equations
become
\begin{align}
    \frac{ds^{*}}{dt^{*}} & = -\lambda s^{*}, &
    \frac{du^{*}}{dt^{*}} & = u^{*}.
\end{align}
\end{rem}
We will use the non-dimensionalized version of the system for the remainder of
the paper.  For simplicity of notation, we will omit the asterisks in the sequel.

\subsection{Dynamics Within A Linear Region}

Within a single square region surrounding a given saddle point, we will use the local coordinates
$(s,u)$ to represent the displacement parallel to the stable and unstable eigenvectors, respectively.  
The line segments $\{(s,0)\in\R^2|0\le s \le 1\}$ and $\{(0,u)\in\R^2|0\le u \le 1\}$ are part of the stable and unstable manifolds of the saddle, respectively, forming separatrices of the flow in the square.  We will restrict attention to flow entering the square along the edge $\{(1,u)\in\R^2| 0<u<1\}$.  It is easy to see that a trajectory entering at an initial point
$x_i \equiv (s_i, u_i) = (1, u_i)$ at time $t_i$ with $u_i > 0$ will exit at
point $(s_f, u_f) = (s_f, 1) = (u_i^\lambda,1)$ at time $\log(1/u_i)$.  We
define the map $f_l: \mathbb{R} \to \mathbb{R}$ from an entry position
along the $s=1$ edge to an exit position along the $u=1$ edge of the region of
linear flow, and a function $T_1(u_i)$ representing the transit time from an
entry position $(1,u_i)$:  
\begin{align}
    f_l(u_i) &= u_i^\lambda,& T_1(u_i) &= \log(1/u_i).   
\end{align}
Note that $f_l$ is a monotonically increasing function of $u_i$ and that
$T_1$ is a monotonically decreasing function of $u_i$. 

The closest approach of a trajectory to the saddle point occurs when the position $x=(s,u)$ is perpendicular to the velocity $v=(-\lambda s, u)$, or 
$0=-\lambda \exp[-2\lambda t] + u_i^2\exp[2t].$ 
Similarly, one may calculate the time at which the speed of the trajectory is minimal.  These times are
\begin{equation}\label{eq:closest}
t_{\mbox{closest}}=\frac{\log\lambda - 2\log u_i}{2(\lambda+1)},
\hspace{1in}
t_{\mbox{slowest}}=\frac{3\log\lambda - 2\log u_i}{2(\lambda+1)}.
\end{equation}
% kms483
% added
We can examine the position of closest approach by integrating the system from the entry
position for a time $t_{\mbox{closest}}$, which gives the location
\begin{align*}
    u_{\mathrm{closest}} &= u_i e^{t_{\mathrm{closest}}}
    = u_i \left({\lambda}/{u_i^2}\right)^{1/(2\lambda + 2)} \\
    s_{\mathrm{closest}} &= e^{-\lambda t_{\mathrm{closest}}}
    = \left({u_i^2}/{\lambda}\right)^{\lambda/(2\lambda + 2)} 
\end{align*}
Taking the ratio of the two coordinates we find
\begin{align}
    u_{\mathrm{closest}} = \lambda^{1/2}\ s_{\mathrm{closest}},
\end{align}
and thus the point of closest approach of each trajectory in this quadrant lies along 
a line passing through the saddle.  A similar calculation using $t_{\mathrm{slowest}}$
shows that the slowest points lie along the line
\begin{align}
    u_{\mathrm{slowest}} = \lambda^{3/2}\ s_{\mathrm{slowest}}
\end{align}
which also passes through the saddle point at the origin of the local coordinate system.

\subsection{Dynamics Across Regions} \label{limit_cycle_sec} 

We will next explore the conditions under which the iris system contains a 
limit cycle. 

\begin{lem} 
\label{lc_existence_lem}
The iris system described in \S\ref{ssec:iris-definition} contains a   limit cycle iff 
the function
\[
\rho(u) = u^\lambda - u + a 
\] 
has isolated real roots  $u^\dagger, u^\ddagger \in (0,1)$, with 
$u^\dagger \le u^\ddagger$.  These roots exist iff $\lambda > 1$ and 
\[
\lambda^{\lambda / (1-\lambda)} - \lambda^{1/(1-\lambda)} + a \le 0.
\] 

\end{lem}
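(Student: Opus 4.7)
The plan is to reduce the existence of a continuous-time limit cycle to the existence of a fixed point of the one-dimensional return map induced on the entry edge of a single linearized square, then analyze the resulting scalar equation $\rho(u)=0$.

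First I would construct the return map explicitly. Using the closed-form solution within a linear region already derived in \S\ref{sec:iris} (entry at local coordinates $(1,u)$, exit at $(u^\lambda,1)$ after time $\log(1/u)$), together with the definition of the iris system (fourfold rotational symmetry and offset $a$), a trajectory exiting square $S_k$ at $(u^\lambda,1)$ re-enters square $S_{k+1\bmod 4}$ at local coordinates $(1,\,u^\lambda+a)$. Hence the quarter-period return map is
\begin{equation*}
\Pi(u)=u^\lambda+a,\qquad u\in(0,1),
\end{equation*}
defined so long as $\Pi(u)<1$. By the fourfold symmetry of the system, any periodic orbit is a fixed point of $\Pi$, and conversely any fixed point of $\Pi$ in $(0,1)$ yields a closed orbit traversing all four squares. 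The equation $\Pi(u)=u$ is exactly $\rho(u)=0$. Since $\rho$ is real-analytic on $(0,\infty)$ and (as shown below) has at most two zeros on $(0,1)$, any zeros are automatically isolated, giving a genuine limit cycle rather than a continuous family.

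Next I would characterize when $\rho$ has roots in $(0,1)$. Compute $\rho'(u)=\lambda u^{\lambda-1}-1$ and $\rho''(u)=\lambda(\lambda-1)u^{\lambda-2}$. Observe $\rho(0^+)=a>0$ and $\rho(1)=a>0$, so any root must occur at an interior minimum. For $\lambda\le 1$ one checks that $\rho'\ge 0$ throughout $(0,1)$ (case $\lambda=1$) or $\rho$ is concave with $\rho(0)=\rho(1)=a>0$ (case $\lambda<1$); in either case $\rho>0$ on $(0,1)$ and no limit cycle exists. For $\lambda>1$, $\rho''>0$ so $\rho$ is strictly convex on $(0,\infty)$, with a unique minimum at
\begin{equation*}
u_{\ast}=\lambda^{1/(1-\lambda)}\in(0,1),
\end{equation*}
obtained by solving $\rho'(u_{\ast})=0$. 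Substituting back yields $\rho(u_{\ast})=\lambda^{\lambda/(1-\lambda)}-\lambda^{1/(1-\lambda)}+a$. By strict convexity together with $\rho(0^+)=\rho(1)=a>0$, the function $\rho$ has two (coincident or distinct) real roots $\ud\le\udd$ in $(0,1)$ precisely when $\rho(u_{\ast})\le 0$, which is the displayed inequality. This establishes the ``iff'' in both directions.

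The only step that requires care is the passage from ``fixed point of $\Pi$'' to ``limit cycle of the flow.'' I would note that the smaller root $\ud$ is the attracting fixed point of $\Pi$ (since $\Pi'(u)=\lambda u^{\lambda-1}$ satisfies $\Pi'(\ud)<1$ by the geometry of the convex graph of $\rho$ crossing the diagonal from above), while $\udd$ is repelling; the full stability argument is deferred to Lemma~\ref{lem:stability}, so here I only need existence and isolatedness, both of which follow from the finite, nondegenerate root structure of $\rho$. The main obstacle is the somewhat fiddly algebra surrounding $u_{\ast}$ and the exponents $\lambda/(1-\lambda)$, $1/(1-\lambda)$; I would double-check signs by noting that for $\lambda>1$ both exponents are negative, so $u_{\ast}\in(0,1)$ and the critical value is indeed a decreasing function of $a^{-1}$, matching the expected geometric picture from Figure~\ref{iris_fig}.
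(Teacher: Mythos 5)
Your proposal is correct and takes essentially the same route as the paper's proof: reduce to the quarter-return map $h(u)=u^\lambda+a$ on the entry edge (your $\Pi$), identify limit cycles with fixed points via $\rho(u)=0$, eliminate $\lambda\le 1$ by a sign/convexity argument, and for $\lambda>1$ use strict convexity, the minimum at $\lambda^{1/(1-\lambda)}$, and the intermediate value theorem to obtain exactly the stated root criterion. The only small difference is bookkeeping: where you invoke ``fourfold symmetry'' to pass from periodic orbits to fixed points of the quarter map, the paper derives this from the monotonicity of $h$ (a fixed point of $h^4$ must be a fixed point of $h$), which is the cleaner justification since symmetry alone would not preclude an orbit closing only after several circuits.
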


\begin{proof}
To track crossings between regions, we will add a subscript to the variable
names indicating how many region crossings have occurred since time $t=0$, so
that $u_{i,3}$ is the ingress location ($u_i$) immediately after the third crossing.  

When $a > 0$, trajectories leaving one region enter the
next with an offset $a$, \textit{i.e.}~$(s_{i, n+1}, u_{i,n+1}) = (1, s_{f,n} + a)$.  
We therefore have a second monotonically increasing map from the point of
egress along the edge of one square to the entry point along the edge of the 
next square, $f_e: s_{f,n} \to u_{i,n+1}=s_{f,n}+a$.  

We can now form a map from the entry position along the edge of one linear
region to the entry position along the edge of the next region \textit{via} the composition
\begin{equation}
h(u_i) = (f_e \circ f_l)(u_i) = (u_i)^\lambda + a.
\end{equation}
As the composition of two monotonically increasing maps, $h$ is also 
monotonically increasing.  Noting that the entry edges form a transversal 
section of the flow, it is clear that $h^4$, the fourfold composition of $h$,
forms a Poincar\'e map for any cycles crossing this edge.  Limit cycles will 
form isolated fixed points in this map, so to find potential limit cycles we
look for points where $u_i = h^4(u_i)$.  Because $h$ is composed of
four identical monotonic maps, the limit cycles will also be fixed points in
these component maps and thus we have a fixed point at $u_i=u$ if and only if $u = h(u) = u^\lambda + a$, 
or equivalently $
%\begin{equation}\label{def_ru}
\rho(u) \equiv u^\lambda - u + a = 0,  
%\end{equation}
$   (\textit{cf.}~Equation \ref{eq:roots-function}).

We now consider the potential values of $\lambda$, which by definition must be positive.  First, consider 
$0<\lambda < 1$.  Rewriting $\rho(u)$ as $u^\lambda(1 - u^{1-\lambda}) + a$
and remembering that $u \in (0,1)$, we can see that $\rho(u)$ is now the sum of a
positive product and a non-negative parameter, and thus cannot be zero.  Next,
consider $\lambda = 1$.  In this case $\rho(u)$ becomes just $a$, and thus 
$\rho = 0$ implies $a = 0$.  Note that $\rho$ is independent of $u$ and thus every
$u \in (0,1)$ is a solution, corresponding to a one parameter family 
of neutrally stable, non-isolated orbits. 
%(TODO: do we need to expand this to be sufficiently rigorous?).  PJT: no this is OK.
Therefore $\lambda$ must be greater than one if a limit cycle exists.  

Differentiating $\rho$ twice with respect to $u$, we find that $\rho$ is convex for all $u > 0$ when 
$\lambda > 1$, and differentiating it once we find that $\rho$ has
a minimum at 
\begin{equation}
\label{u_i_min_eqn}
(u_i)_{\min} = \lambda^{1/(1-\lambda)}.
\end{equation}  
Because 
%the right hand side  % the right hand side of what??
$\rho(u)$ is continuous, and equal to $a$ when $u = 0$ or $u = 1$, by the midpoint theorem it
will have a root between $0$ and $(u_i)_{\min}$ and a root between
$(u_i)_{\min}$ and $1$ iff
\begin{equation}
\label{roottest}
0 \ge r((u_i)_{\min}) = (u_i)_{\min}^\lambda - (u_i)_{\min} + a 
    = \lambda^{\lambda / (1-\lambda)} - \lambda^{1/(1-\lambda)} + a.
\end{equation}
We will call the smaller root $\ud$ and the larger (possibly identical)
root $\udd$.
\end{proof}

\begin{figure} \centering
\includegraphics[width=.7\figwidth]{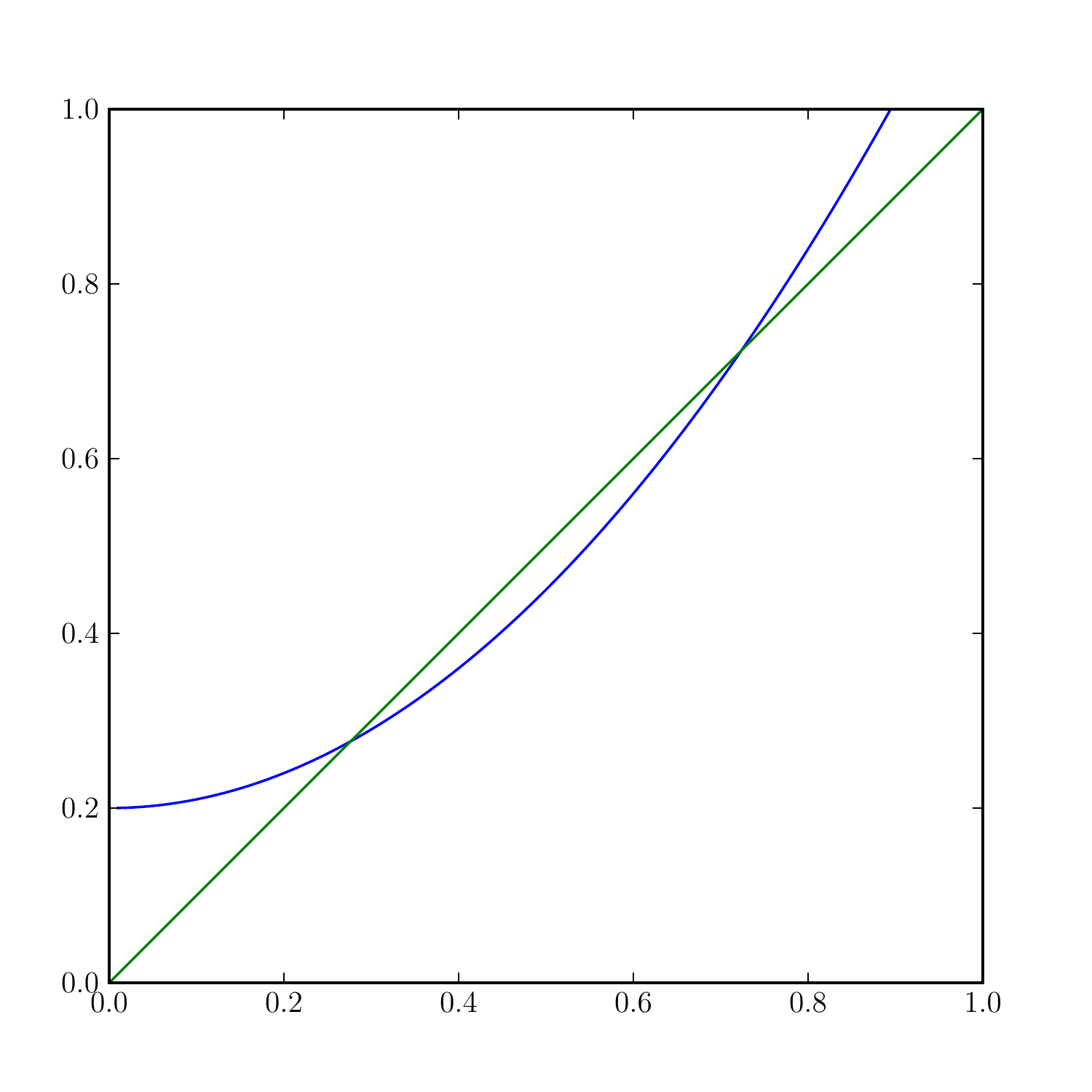}
\caption{The map $h=f_e\circ f_l$ from the entry position of a trajectory along the edge a 
square to its entry position along the edge the next square forms a map
analogous to a return map (blue curve).  This map is continuous and
monotonically increasing, so a limit cycle can only exist where a trajectory
enters the next square at the same position it entered the current square
(green line indicates the identity map).  In this example there are two intersections corresponding to
the stable and unstable limit cycles.  Here $a=0.2$ and $\lambda=2$.  
The blue curve meets the vertical 
axis at the offset between squares $a$, and changing the offset raises or 
lowers the blue curve without changing its shape.  Varying $a$ allows for $0$, $2$, or
$1$ limit cycles (corresponding to Figure~\ref{iris_fold}, 
Figure~\ref{iris_lc1}, and the fold bifurcation that occurs between them).}
\label{iris_return_map_fig}
\end{figure}

Figure \ref{iris_return_map_fig} illustrates the map $h=f_e\circ f_l$ when $\lambda=2$ and $a=0.2$.

We now examine the stability of the roots $\ud$ and $\udd$.  

\begin{lem}\label{lem:stability}
If the roots $\ud$ and $\udd$ of Equation \ref{eq:roots-function} exist and are distinct, $\ud$ 
gives the entry position of a stable limit cycle along the $s=1$ edge of a 
region and $\udd$ gives the entry position of an unstable limit cycle 
along the same edge.  
\end{lem}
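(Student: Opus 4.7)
My plan is to reduce the stability question to a one-dimensional linear stability calculation for the scalar Poincar\'e-type map $h(u)=u^\lambda+a$ constructed in Lemma~\ref{lc_existence_lem}. Since by the fourfold symmetry of the iris system the full return map on the ingress section $\{(1,u):0<u<1\}$ is $h^4$, and since $(h^4)'(u^*)=\bigl(h'(u^*)\bigr)^4$, it suffices to show $0<h'(\ud)<1$ and $h'(\udd)>1$; standard ODE theory then transfers hyperbolic (in)stability of the fixed point of the return map to (in)stability of the corresponding periodic orbit.

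The key computation is $h'(u)=\lambda u^{\lambda-1}$, which is strictly positive on $(0,1)$, so every fixed point of $h$ is an orientation-preserving attractor or repeller. To compare $h'(u)$ to $1$ at the two fixed points, I would exploit the identity
\begin{equation*}
\rho'(u)=\lambda u^{\lambda-1}-1=h'(u)-1,
\end{equation*}
so that the sign of $\rho'(u)$ at a root of $\rho$ tells us whether $h'$ lies below or above $1$ there. From the proof of Lemma~\ref{lc_existence_lem}, $\rho$ is strictly convex on $(0,\infty)$ with a unique minimizer $(u_i)_{\min}=\lambda^{1/(1-\lambda)}\in(0,1)$, and whenever the two roots $\ud<\udd$ are distinct they bracket this minimizer. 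Hence $\rho'(\ud)<0<\rho'(\udd)$, which immediately yields $h'(\ud)<1<h'(\udd)$. Combined with $h'>0$, this gives $0<h'(\ud)<1$ and $h'(\udd)>1$, so the smaller root is an asymptotically stable fixed point of $h$ (hence of $h^4$) and the larger root is linearly unstable.

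Finally, to turn fixed-point stability of the return map into stability of the corresponding limit cycle in the flow, I would invoke the standard fact that a transverse periodic orbit of a smooth flow is asymptotically stable (respectively unstable) iff the associated Poincar\'e map is asymptotically stable (respectively unstable) at the intersection point. The iris flow is piecewise linear with transverse crossings between adjacent squares, so the return map $h^4$ is well defined and $C^1$ on an open neighborhood of $\ud$ and $\udd$ inside a single square, and the same transfer principle applies.

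The step that requires the most care is not any single estimate but the bookkeeping at the boundary case $\ud=\udd$, where $\rho'$ vanishes at the common root, $h'$ equals $1$, and both limit cycles coalesce in a fold bifurcation; the statement of the lemma explicitly assumes the roots are distinct, so the argument above is hyperbolic at both fixed points and the degenerate case may simply be excluded. A secondary subtlety is verifying that the linearization of $h$ near $\ud$ really controls the nonlinear dynamics globally on $(0,\udd)$; this follows from monotonicity of $h$ together with $h(0)=a>0$ and $h(\udd)=\udd$, which force every orbit starting in $(0,\udd)$ to converge monotonically to $\ud$, giving a basin of attraction extending from the inner unstable cycle at $\udd$ outward through the ingress edge.
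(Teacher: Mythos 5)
Your proposal is correct and follows essentially the same route as the paper: both reduce stability to the sign of $h'(u)-1$ at the fixed points of the scalar entry-to-entry map $h(u)=u^\lambda+a$, using the fact that $h'$ is increasing (equivalently, $\rho$ is convex) and that $h'=1$ exactly at the minimizer $\lambda^{1/(1-\lambda)}$ of $\rho$, which the distinct roots $\ud<\udd$ must bracket. Your identity $\rho'(u)=h'(u)-1$ is just a repackaging of the paper's computation, and your added remarks on $h^4$ versus $h$, the transfer principle, and the excluded fold case are consistent with, though more explicit than, the paper's brief concluding sentence.
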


\begin{proof}
We can determine the stability of the two points by examining 
the derivative of $h$ at these points; if $|dh/du_i| > 1$ the fixed point will
be unstable, and if $|dh/du_i| < 1$ the point will be stable.  Because the
existence of the roots implies $\lambda > 1$, we know $dh/du_i = (u_i)^{\lambda
- 1} \lambda$ will be positive.  Differentiating $h$ a second time we see that
$d^2h/du_i^2 = (u_i)^{\lambda - 1} (\lambda - 1) \lambda$ is always greater
than zero, and thus $dh/du_i$ is monotonically increasing.  Since a fixed point
in a map is stable iff the magnitude of the derivative at that point is greater
than one, we find the point where $dh/du_i$ passes through $1$:
\[
1 = dh/du_i = \lambda (u_i)^{\lambda - 1}
\]
or
\[
u_i = \lambda^{1/(1 - \lambda)}.
\]
This, however, is just $(u_i)_{\min}$ from Equation~\ref{u_i_min_eqn}.  Because
this lies between the two roots, $\left.dh/du_i\right|_{\ud} <
\left.dh/du_i\right|_{(u_i)_{\min}} = 1$, and thus $\ud$ is a stable
fixed point of $h$, and $1 = \left.dh/du_i\right|_{(u_i)_{\min}} <
\left.dh/du_i\right|_{\udd}$ and thus $\udd$ is an unstable
fixed point of $h$.  Because $h$ is a Poincar\'e map at the entry plane of the
region, these fixed points correspond to the entry points of a stable and
unstable limit cycle respectively.  \end{proof}

\begin{rem}
Lemmas \ref{lc_existence_lem} and \ref{lem:stability} together establish Theorem \ref{thm:main}, Part 1.
\end{rem}

As just shown, for small positive $a$ two limit cycles exist - one stable and one unstable.  As 
$a \to 0$, the stable limit cycle is destroyed in a heteroclinic bifucation 
and the unstable limit cycle collapses to a point at the center of the system.
As $a$ increases, the two limit cycles collide in a cycle-cycle fold bifurcation.  The collision occurs when the inequality \eqref{roottest} becomes an equality.  The two roots converge to 
$(u_i)_{\min}$ %and there is a fold bifurcation of 
as
the two limit cycles that cross the section at the two roots
merge.  
Figure \ref{iris_bifurcation_fig} shows the bifurcation diagram.
% compare: This is summarized in the bifurcation diagram shown in Figure~\ref{iris_bifurcation_fig}.

\begin{figure} \centering
\includegraphics[width=0.7\figwidth]{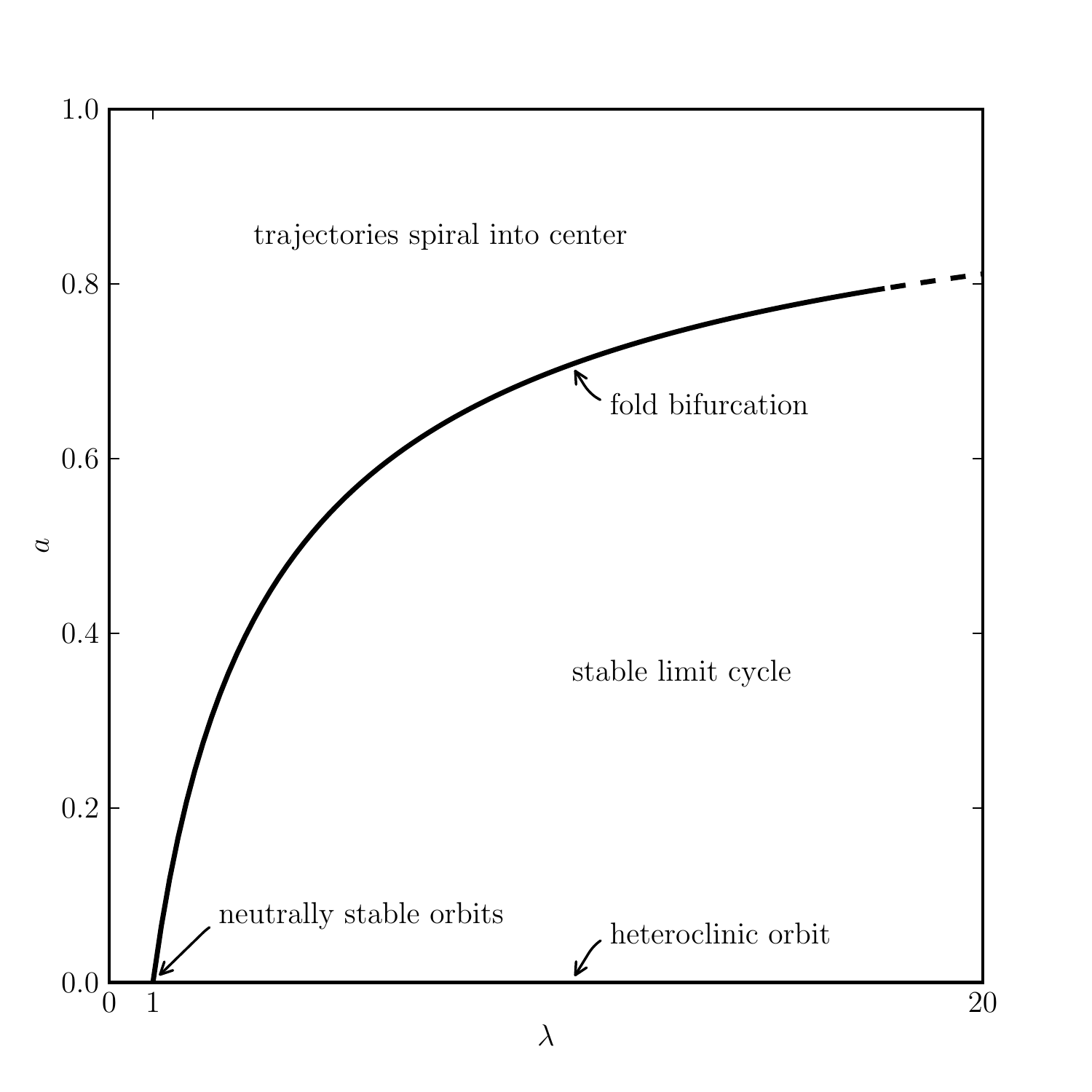}
\caption{Two-parameter bifurcation diagram for the iris system.  When the stable-to-unstable eigenvalue ratio $\lambda$ is
sufficiently large relative to $a$, a stable and an unstable limit cycle coexist 
(region labeled `stable limit cycle').  As $a$ increases the two limit
cycles are destroyed in a fold bifurcation (line labeled `fold bifurcation').
As $a$ approaches $0$, the system approaches a heteroclinic orbit (line labeled
`heteroclinic orbit').  When $a$ is sufficiently large or $\lambda < 1$,
trajectories spiral into the center as shown in Figure~\ref{iris_fold} (area
labeled `trajectories spirals into center').  When $\lambda=1$ and $a=0$, the system becomes
a square filled with neutrally stable orbits (point labeled `neutrally stable
orbits').} 
\label{iris_bifurcation_fig} 
\end{figure}

\section{Effects of a small instantaneous perturbation}
\label{sec:local}

We now examine the effects of a small perturbation of a trajectory starting on
the limit cycle.  We will first explore the effects on the exit time and exit
position from the square, and then extend our analysis to the asymptotic 
effect of the perturbation on phase over many cycles (the phase response curve or PRC).

\subsection{Initial effects of a small perturbation}

%Let us assume we have a trajectory that is initially on the stable limit cycle
Assume a trajectory begins on the stable limit cycle
at position $x_0 = (1, \ud)$ at time $t=0$, and receives a small
perturbation of size $r$ in the direction of a unit vector 
$\eta = (\eta_u, \eta_s)$ at time $t^\prime$.  We will 
consider only perturbations that occur before the trajectory leaves the square 
(\textit{i.e.}~$0 \le t^\prime < T_1(\ud) = T_1^\dagger$);  
earlier or later perturbations can be reduced to this case by
an appropriate time shift and discrete rotation.  We will also require that the perturbation not push the 
trajectory out of the basin of attraction of the limit cycle.  The infinitesimal PRC we calculate below
is obtained in the limit of small perturbations; for finite $a>0$, the basin of attraction has finite width, and trajectories that escape the limit cycle's attracting set are not assigned an asymptotic phase.
% a trajectory that escapes the limit cycle isn't of interest in terms of asymptotic phase.  

It is straightforward to %We can easily 
calculate the effects of the perturbation on the time and 
position at which the trajectory leaves the square.  We first consider the
typical case where the perturbation does not push the trajectory across 
the border of the square.

\begin{lem}\label{first_pass_lem}
Consider a trajectory initially on the stable limit cycle $\gamma$ of the iris system such that $\gamma(0)=(1,\ud)$,
and a perturbation $\Delta x$ of size $||\Delta x||=r$ in the direction of unit vector $\eta=(\eta_s,\eta_u)=\Delta x/r$ at time $0<t^\prime < T_1^\dagger$. %\footnote{Here $||\cdot||$ denotes the QQQ which?? norm, that is $1=\sqrt{\eta_s^2+\eta_u^2}$ or $1=|\eta_s|+|\eta_u|$?}  PJT: we use the L1 norm.
Assume
the perturbation does not push the trajectory out of the basin of 
attraction or into another square.  Then the perturbation will result in a
change in the position of entry to the next square
\begin{equation}\label{first_pass_du}
\Delta u_1^\prime = \big(\eta_s (\ud e^{t^\prime})^{\lambda} 
    + \eta_u \lambda (\ud)^{\lambda-1} e^{-t^\prime} \big) r + o(r)
\end{equation}
and a change in transit time of the square
\begin{equation}\label{first_pass_dt}
\Delta T_1^\prime = -(\eta_u e^{-t^\prime} / \ud) r + o(r),
\end{equation}
as $r\to 0$.
\end{lem}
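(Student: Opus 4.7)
The plan is to exploit the explicit solvability of the linear flow inside a single square. The unperturbed trajectory starting at $\gamma(0)=(1,u^\dagger)$ has the closed form $\gamma(t)=(e^{-\lambda t},u^\dagger e^t)$, so at the perturbation instant $t'$ the unperturbed state is $(e^{-\lambda t'},u^\dagger e^{t'})$ and the post-perturbation state is $(e^{-\lambda t'}+r\eta_s,\,u^\dagger e^{t'}+r\eta_u)$. By the standing hypothesis the kicked trajectory remains inside the same square, so the same linear dynamics continues to govern the flow after the kick, and the problem reduces to first-order Taylor expansion in $r$ of two explicit quantities: the exit time and the exit $s$-coordinate.

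First I would solve the linear ODE forward from the perturbed state, obtaining $u(t'+\tau)=(u^\dagger e^{t'}+r\eta_u)e^\tau$ and $s(t'+\tau)=(e^{-\lambda t'}+r\eta_s)e^{-\lambda\tau}$ for $\tau\ge 0$. Setting $u(t'+\tau)=1$ yields the new time-to-exit $\tau=-\log(u^\dagger e^{t'}+r\eta_u)$. Subtracting the unperturbed remaining time $-\log(u^\dagger e^{t'})$ and expanding $\log(1+x)=x+o(x)$ with $x=r\eta_u/(u^\dagger e^{t'})$ gives $\Delta T_1'=-(\eta_u e^{-t'}/u^\dagger)\,r+o(r)$, which is \ref{first_pass_dt}.

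Second, I would substitute this $\tau$ into the $s$-equation to obtain the exit $s$-coordinate $s_f'=(e^{-\lambda t'}+r\eta_s)(u^\dagger e^{t'}+r\eta_u)^\lambda$. Applying $(x+\delta)^\lambda=x^\lambda+\lambda x^{\lambda-1}\delta+o(\delta)$ to the second factor at $x=u^\dagger e^{t'}>0$, and then multiplying the two factors out to first order in $r$, gives $s_f'=(u^\dagger)^\lambda+\eta_s(u^\dagger e^{t'})^\lambda r+\eta_u\lambda(u^\dagger)^{\lambda-1}e^{-t'}r+o(r)$. Since the entry position in the next square differs from $s_f'$ only by the fixed offset $a$, the change in entry position equals the change in $s_f'$, reproducing \ref{first_pass_du}.

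The main ``obstacle'' here is really only careful bookkeeping rather than a substantive difficulty. One must check that $u^\dagger e^{t'}>0$ throughout $[0,T_1^\dagger)$ so that $(u^\dagger e^{t'}+r\eta_u)^\lambda$ is smooth in $r$ at $r=0$ and the Taylor expansion is legitimate uniformly in $t'$, and one must invoke the stated hypotheses (the kick does not cross a square boundary and does not leave the basin of attraction) so that a single linear piece of the piecewise flow covers the entire calculation and the perturbed trajectory is guaranteed to reach the $u=1$ exit edge. Everything else is just two first-order expansions.
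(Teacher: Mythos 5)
Your proposal is correct and follows essentially the same route as the paper's proof: write the explicit linear solution from the perturbed point, solve $u=1$ for the exit time and expand the logarithm to first order in $r$, then substitute into the $s$-equation and expand $(u^\dagger e^{t'}+r\eta_u)^\lambda$ to obtain the exit coordinate, using the fixed-point relation $u^\dagger=(u^\dagger)^\lambda+a$ so that the offsets are measured from the limit-cycle values. Your added remark that $u^\dagger e^{t'}>0$ legitimizes the Taylor expansions is a sound (if minor) point the paper leaves implicit.
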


\begin{proof}
Immediately before the 
perturbation occurs, the trajectory will be at the point
\[
    \lim_{t \to t^\prime-} x(t) = (e^{-\lambda t^\prime}, \ud 
        e^{t^\prime}).
\]
The perturbation shifts the position to
\[
    \lim_{t \to t^\prime+} x(t) = (e^{-\lambda t^\prime} + r \eta_s, 
        \ud e^{t^\prime} + r \eta_u).
\]
The trajectory will exit the square once $x_u$ grows to $1$ at time
\[
T_1^\prime = t^\prime + \bigl(- \log(\ud e^{t^\prime} + r \eta_u)\bigr)
\] 
and enter the next square along the $s = 1$ edge at position
\[
u_1^\prime =  
        a + (e^{-\lambda t^\prime} + r \eta_s) 
                e^{-\lambda(T_1^\prime - t^\prime)} 
    = a + (e^{-\lambda t^\prime} + r \eta_s) 
            (\ud e^{t^\prime} + r \eta_u)^{\lambda}
\]
For $r \ll 1$, we may write 
\begin{align*}
T_1^\prime &= t^\prime - \log(\ud e^{t^\prime})
    -  (\eta_u e^{-t^\prime} / \ud) r + o(r),\\
u_1^\prime &= a + (\ud)^\lambda 
    +  \big(\eta_s (\ud e^{t^\prime})^{\lambda} 
    + \eta_u \lambda (\ud)^{\lambda-1} e^{-t^\prime} \big) r + o(r)
\end{align*}
as $r \to 0$.  Noting that 
$t^\prime - \log(\ud e^{t^\prime}) = -\log(u_i) = T_1^\dagger$ and
$a + (\ud)^\lambda = \ud$, these expressions may be viewed as the exit time and 
position for the limit cycle with a linear correction and 
additional higher order terms in $r$. That is, 
\begin{align*}
T_1^\prime &= T_1^\dagger - (\eta_u e^{-t^\prime} / \ud) r + o(r),\\
u_1^\prime &= \ud + \big(\eta_s (\ud e^{t^\prime})^{\lambda} 
    + \eta_u \lambda (\ud)^{\lambda-1} e^{-t^\prime} \big) r + o(r).
\end{align*}
Subtacting $T_1^\dagger$ and $\ud$ respectively then gives our 
result.
\end{proof}

Perturbations that push the trajectory across the boundary between two squares
can be handled without much additional difficulty.  For example, we can 
treat a perturbation that crosses into the next square as an advancement of 
the trajectory to the point that it enters the next square followed by a 
perturbation to the new location in the new square.  
%(QQQ: do we need to give a concrete example and to write out the equations?)  
Perturbations to the previous square can be handled in a similar fashion, as can perturbations to
the diagonal square.  
However, for any finite $a>0$ and any perturbation time other than $t^\prime=0$ or $t^\prime=T_1^\dagger$, the perturbed point  will remain in the same square as the unperturbed point for sufficiently small $r>0$. Therefore the additional cases do not alter the calculation of the PRC except possibly at a set of finite points, and they will be omitted.

\subsection{Subsequent effects of a perturbation}

In the absence of perturbation, the limit cycle passing through $\gamma(0)=(1,\ud)$
will exhibit a (constant) sequence of edge crossing locations $u_{i,n}=\ud, n=\{0,1,2,\cdots\}$
at a sequence of crossing times $T^\dagger_n=nT^\dagger_1$ at constant intervals $\Delta T^\dagger_n\equiv T^\dagger_1$.  A single perturbation $\Delta x=r(\eta_s,\eta_u)$ at time $t^\prime$ will lead to a new sequence of edge crossings $\{u_{i,n}^\prime\}$ and crossing times $\{T_n^\prime\}$. 
Following the perturbation, the time spent between the $n^{th}$ and $n+1^{st}$ crossings is offset from the unperturbed dwell time: $T_{n+1}^\prime-T_n^\prime=T_1^\dagger+\Delta T_{n+1}^\prime$.  
Next we calculate the change in entry position for subsequent edge 
crossings ($\Delta u_n^\prime=u_{i,n}^\prime-\ud$) and the dwell time offsets for subsequent 
squares ($\Delta T_{n+1}^\prime$).  

\begin{lem}\label{lem:subsequent-effects}
Under the conditions of Lemma~\ref{first_pass_lem}, the entry position after
the $n^{th}$ crossing $(n \ge 1)$ will be offset by 
\begin{equation}\label{nth_pass_du}
\Delta u_n^\prime = \left(\lambda(\ud)^{\lambda-1}\right)^{n-1} 
        \big(\eta_s (\ud e^{t^\prime})^{\lambda} 
            + \eta_u \lambda (\ud)^{\lambda-1} e^{-t^\prime} \big) r 
    + o(r),
\end{equation}
and the time spent in that square will be offset by 
\begin{equation}\label{nth_pass_dt}
\Delta T_{n+1}^\prime = (\ud)^{-1} 
        \left(\lambda(\ud)^{\lambda-1}\right)^{n-1} 
        \big(\eta_s (\ud e^{t^\prime})^{\lambda} 
            + \eta_u \lambda (\ud)^{\lambda-1} e^{-t^\prime} \big) r 
     + o(r),
\end{equation}
as $r\to 0$.
\end{lem}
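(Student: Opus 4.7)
The plan is to proceed by induction on $n$, exploiting the fact that after the single perturbation applied at time $t'$ in the first square, the trajectory evolves unperturbed and obeys the Poincar\'e map $h(u)=u^\lambda+a$ constructed in \S\ref{limit_cycle_sec}, which has $\ud$ as a fixed point with derivative $h'(\ud)=\lambda(\ud)^{\lambda-1}$.

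Base case: For $n=1$, equation \eqref{nth_pass_du} reduces (since $(\lambda(\ud)^{\lambda-1})^{0}=1$) to exactly the formula for $\Delta u_1'$ proved in Lemma~\ref{first_pass_lem}, so nothing new is required.

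Inductive step for the entry position: Assume $\Delta u_n' = (\lambda(\ud)^{\lambda-1})^{n-1}\Delta u_1' + o(r)$. Because no further perturbation occurs, the $(n+1)$-st entry point is $u_{i,n+1}' = h(u_{i,n}') = h(\ud + \Delta u_n')$. Taylor-expanding $h$ about its fixed point $\ud$,
\begin{equation*}
h(\ud + \Delta u_n') = \ud + h'(\ud)\,\Delta u_n' + O\!\left((\Delta u_n')^2\right) = \ud + \lambda(\ud)^{\lambda-1}\Delta u_n' + o(r),
\end{equation*}
which gives $\Delta u_{n+1}' = \lambda(\ud)^{\lambda-1}\Delta u_n' + o(r)$, and iterating produces the geometric factor in \eqref{nth_pass_du}.

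Dwell time in square $n+1$: Once the trajectory enters the $(n+1)$-st square at $(1, u_{i,n}')$, the linear flow $\dot u = u$ carries it to the exit edge $u=1$ in time $T_1(u_{i,n}') = \log(1/u_{i,n}')$. Linearizing about $\ud$ gives
\begin{equation*}
T_1(\ud + \Delta u_n') - T_1(\ud) = -\Delta u_n'/\ud + O\!\left((\Delta u_n')^2\right),
\end{equation*}
and substituting the closed form for $\Delta u_n'$ yields \eqref{nth_pass_dt} (up to the constant factor $(\ud)^{-1}$).

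The main obstacle is not computational but rather the uniform control of the $o(r)$ error across iterations. For each fixed $n$, smoothness of $h$ ensures that the $O(r^2)$ error generated at each linearization remains $o(r)$ after a bounded number of compositions, so the proposition as stated (for each $n$) follows directly. The only delicate point worth isolating is that the perturbation must remain small enough, for each of the $n$ iterations considered, that the Taylor remainders stay subordinate to the leading-order terms; this holds automatically since $r\to 0$ before $n\to\infty$, and in fact the geometric factor $\lambda(\ud)^{\lambda-1}<1$ at the stable limit cycle (Lemma~\ref{lem:stability}) means the leading-order offsets \emph{contract} with $n$, so no blowup of the error occurs.
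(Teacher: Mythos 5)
Your proof is correct and is essentially the paper's own argument: the paper obtains the recursion by reapplying Lemma~\ref{first_pass_lem} with $\eta=(0,1)$, $t^\prime=0$ and $r=\Delta u_n^\prime$, which is exactly your linearization of the return map $h$ at its fixed point (where $h^\prime(\ud)=\lambda(\ud)^{\lambda-1}$), and the dwell-time offset follows from the same linearization of $T_1(u)=\log(1/u)$ at $\ud$ that you perform. One remark: your computation, like the paper's own proof and its subsequent use in Lemma~\ref{lem:cumulative-change}, yields $\Delta T_{n+1}^\prime=-(\ud)^{-1}\left(\lambda(\ud)^{\lambda-1}\right)^{n-1}\bigl(\eta_s(\ud e^{t^\prime})^{\lambda}+\eta_u\lambda(\ud)^{\lambda-1}e^{-t^\prime}\bigr)r+o(r)$, i.e.\ with a minus sign, so the plus sign printed in Equation~\ref{nth_pass_dt} is a typographical slip in the statement rather than a flaw in your derivation.
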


\begin{proof}
A perturbed trajectory will enter the $n^{th}$ edge with an offset 
$\Delta u_n^\prime$.  This situation is equivalent to a 
trajectory entering along the limit cycle that experiences 
a perturbation in the $u$ direction ($\eta = (0,1)$) of magnitude 
$r = \Delta u_n^\prime$ immediately upon entering the square.  We can thus 
use Equation~\ref{first_pass_du} with $t^\prime = 0$ to find the entry 
position along the next ($n+1^{st}$) edge:
\[
\Delta u_{n+1}^\prime = \lambda(\ud)^{\lambda-1} \Delta u_n^\prime 
    + o(\Delta u_n^\prime).
\]
Therefore to leading order the sequence of offsets follows a geometric series, 
%This is obviously a recurrence relationship for a geometric series, 
with closed form
\begin{align*}
\Delta u_n^\prime &= \left(\lambda(\ud)^{\lambda-1}\right)^{n-1} 
        \Delta u_1^\prime + o(\Delta u_1^\prime) \\
    &= \left(\lambda(\ud)^{\lambda-1}\right)^{n-1} 
        \big(\eta_s (\ud e^{t^\prime})^{\lambda} 
            + \eta_u \lambda (\ud)^{\lambda-1} e^{-t^\prime} \big) r 
    + o(r),
\end{align*}
as $r\to 0$.
Substitution into Equation~\ref{first_pass_dt} provides
%We can then substitute this result into Equation~\ref{first_pass_dt} to find
%the change in time spent in this square, again setting $\eta=(0,1)$,
the deviation in the time of passage through each square, compared with that for
the limit cycle trajectory.
Again setting $\eta=(0,1)$, $r = \Delta u_n^\prime$, and $t'=0$ gives
\begin{align*}
\Delta T_{n+1}^\prime &= 
        -(\ud)^{-1} \Delta u_n^\prime + o(u_n^\prime) \\
    &= -(\ud)^{-1} 
        \left(\lambda(\ud)^{\lambda-1}\right)^{n-1} 
        \big(\eta_s (\ud e^{t^\prime})^{\lambda} 
            + \eta_u \lambda (\ud)^{\lambda-1} e^{-t^\prime} \big) r 
     + o(r)
.
\end{align*}
\end{proof}

We now examine the asymptotic effect of the perturbation at long times.  
Because $\ud$ is less than one, both 
$\Delta u$ and $\Delta t$ will approach zero for large $n$.  This is 
equivalent to saying that the orbit will asymptotically return the stable 
limit cycle, as expected.  It may, however, return with a different phase
than the unperturbed trajectory.  

\begin{lem}\label{lem:cumulative-change}
Under the conditions of Lemma~\ref{first_pass_lem}, the cumulative 
change in crossing times after $n$ crossings is
\begin{equation}
\Delta C_n^\prime = 
        -\frac{
            \eta_s (\ud e^{t^\prime})^{\lambda} 
                \left(1-
                \left(\lambda (\ud)^{\lambda-1}\right)^{n+1}\right)
            + \eta_u  e^{-t^\prime} \left(1-
                \left(\lambda (\ud)^{\lambda-1}\right)^{n+2}\right)
        }{\ud-\lambda (\ud)^{\lambda}}
     r
    + o(r)
\end{equation}
which converges to 
\begin{equation}\label{c_infty_def}
\Delta C_\infty^\prime = 
        -\frac{
            \eta_s (\ud e^{t^\prime})^{\lambda} 
            + \eta_u  e^{-t^\prime} 
        }{\ud-\lambda (\ud)^{\lambda}}
     r
    + o(r)
    .
\end{equation}
as $n \to \infty$.
\end{lem}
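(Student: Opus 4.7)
The cumulative change $\Delta C_n'$ is by definition the sum of the per-square dwell-time deviations accumulated after the perturbation. Using Lemma~\ref{first_pass_lem} for the first square and Lemma~\ref{lem:subsequent-effects} for every subsequent square, I would write $\Delta C_n' = \Delta T_1' + \sum_{k=1}^{n+1}\Delta T_{k+1}' + o(r)$ and reduce the calculation to summing a single geometric series in the factor $q := \lambda(\ud)^{\lambda-1}$, which is precisely the derivative of the return map $h$ at $\ud$.

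Two preliminary observations organize the computation. First, by Lemma~\ref{lem:stability} the stable root satisfies $\ud < (u_i)_{\min} = \lambda^{1/(1-\lambda)}$, which is exactly the statement $0 < q < 1$; this guarantees both that the geometric series converges and that the limit as $n\to\infty$ exists. Second, I would record the algebraic identity $\ud - \lambda(\ud)^\lambda = \ud(1-q)$, which will become the denominator of the target formula.

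The body of the proof would then substitute the explicit offsets from Lemmas~\ref{first_pass_lem} and~\ref{lem:subsequent-effects}, factor out the common prefactor $-(\ud)^{-1}r$, and apply $\sum_{k=0}^{n} q^{k} = (1-q^{n+1})/(1-q)$ to the subsequent-pass contributions, which share the common coefficient $\eta_s (\ud e^{t'})^\lambda + \eta_u q e^{-t'}$. Placing the isolated $\Delta T_1'$ contribution over the common denominator $\ud(1-q)$ produces a numerator of the form $\eta_u e^{-t'}(1-q) + (1-q^{n+1})[\eta_s(\ud e^{t'})^\lambda + \eta_u q e^{-t'}]$. Using the elementary identity $(1-q) + q(1-q^{n+1}) = 1 - q^{n+2}$ collapses the two $\eta_u$ pieces into a single term with factor $(1-q^{n+2})$, while the $\eta_s$ piece retains the factor $(1-q^{n+1})$. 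Rewriting $\ud(1-q)$ as $\ud - \lambda(\ud)^\lambda$ yields the stated closed form for $\Delta C_n'$. Sending $n\to\infty$ then gives Equation~\eqref{c_infty_def} immediately, since $q^{n+1},q^{n+2}\to 0$.

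The main obstacle is essentially bookkeeping rather than mathematics: getting the indexing right so that the finite sum contains exactly the correct number of terms, tracking the minus sign that appears in the derivation of Lemma~\ref{lem:subsequent-effects} (and which is what makes the overall sign negative), and ensuring the accumulated $o(r)$ error remains $o(r)$ uniformly in $n$. The latter point is the only genuinely analytic issue: for any fixed $n$ a finite sum of $o(r)$ quantities is automatically $o(r)$, and for the $n\to\infty$ limit one uses the contraction $q<1$ to bound the per-pass remainders by a convergent geometric series, so the tail error also vanishes faster than $r$ uniformly in $n$.
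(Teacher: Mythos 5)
Your proposal is correct and follows essentially the same route as the paper's proof: sum the dwell-time offsets from Lemmas~\ref{first_pass_lem} and~\ref{lem:subsequent-effects} as a geometric series in $q=\lambda(\ud)^{\lambda-1}$, place everything over the common denominator $\ud(1-q)=\ud-\lambda(\ud)^{\lambda}$, and let $n\to\infty$. If anything you are slightly more careful than the paper itself: your indexing reproduces the stated exponents $n+1$ and $n+2$ exactly (the paper's displayed summation bounds are slightly inconsistent with its final formula), you justify $0<q<1$ by invoking Lemma~\ref{lem:stability} rather than the paper's terse appeal to $0<\ud<1$ and $\lambda>1$, and you explicitly address the uniformity in $n$ of the accumulated $o(r)$ error, which the paper leaves implicit.
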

\begin{proof}
The cumulative change in crossing times is the sum of the dwell
time offsets in the previous squares:
\[
\Delta C_n^\prime = \sum_{k=1}^n \Delta T_k^\prime 
    = \Delta T_1^\prime + \sum_{k=2}^n \Delta T_k^\prime 
\]
Substituting in Equations~\ref{first_pass_dt} and~\ref{nth_pass_dt} yields the result: 
%performing a bit of algebra, we have
\begin{align*}
\Delta C_n^\prime &=
    -\frac{\eta_u e^{-t^\prime}}{\ud} r 
    + \sum_{k=2}^n
        \frac{-1}{\ud} 
            \left(\lambda(\ud)^{\lambda-1}\right)^{k-2} 
            \big(\eta_s (\ud e^{t^\prime})^{\lambda} 
                + \eta_u \lambda (\ud)^{\lambda-1} e^{-t^\prime} \big)
    + o(r)
  \\ &= 
    \frac{-1}{\ud} \left( 
    \eta_u e^{-t^\prime} 
    + \big(\eta_s (\ud e^{t^\prime})^{\lambda} 
            + \eta_u \lambda (\ud)^{\lambda-1} e^{-t^\prime} \big)
        \sum_{k=0}^n
        \left(\lambda (\ud)^{\lambda-1}\right)^k 
    \right) r
    + o(r)
  \\ &= 
    \frac{-1}{\ud} \left( 
    \eta_u e^{-t^\prime} 
    + \big(\eta_s (\ud e^{t^\prime})^{\lambda} 
            + \eta_u \lambda (\ud)^{\lambda-1} e^{-t^\prime} \big)
        \frac{1-\left(\lambda (\ud)^{\lambda-1}\right)^{n+1}}%
            {1-\lambda (\ud)^{\lambda-1}}
    \right) r
    + o(r)
  \\ &= 
        -\frac{
            \eta_s (\ud e^{t^\prime})^{\lambda} 
                \left(1-
                \left(\lambda (\ud)^{\lambda-1}\right)^{n+1}\right)
            + \eta_u  e^{-t^\prime} \left(1-
                \left(\lambda (\ud)^{\lambda-1}\right)^{n+2}\right)
        }{\ud-\lambda (\ud)^{\lambda}}
     r
    + o(r)
    .
\end{align*}
Recalling that $0<\ud < 1$ and $\lambda > 1$, taking the limit as 
$n\to\infty$ gives
\[
\Delta C_\infty^\prime = 
        -\frac{
            \eta_s (\ud e^{t^\prime})^{\lambda} 
            + \eta_u  e^{-t^\prime} 
        }{\ud-\lambda (\ud)^{\lambda}}
     r
    + o(r)
\] as required. \end{proof}

%\begin{rem}[PJT comment] 
The limit cycle enters each square at location $(1,\ud)$ and exits at $((\ud)^\lambda,1)$, so it is natural to define the exit coordinate along the stable eigenvector axis as $\sd=(\ud)^\lambda$.  Recalling that $\ud=\sd+a$, Equation \ref{c_infty_def} may be simplified as
\begin{equation}
\label{eq:C-simpler}
\Delta C_\infty^\prime = 
        -\frac{
            \eta_s (\sd e^{\lambda t^\prime}) 
            + \eta_u  e^{-t^\prime} 
        }{\ud-\lambda \sd}
     r
    + o(r)
    =
      -\frac{\eta\cdot\beta
        }{\ud-\lambda\sd}
     r
    + o(r).
\end{equation}
The vector $\beta=\left(\sd e^{\lambda t^\prime} , e^{-t^\prime}  \right)$ may be thought of as a solution to  the same differential equation as a trajectory within the first square, $\xi=(s,u)$, but with time reversed and initial conditions set to the far end of the square.  That is,
\begin{eqnarray*}
\frac{d\xi}{dt}&=&\matrix{cc}{-\lambda & 0 \\ 0 & 1},\,\,\,
\xi(0)=\matrix{c}{1\\ \ud}\\
\frac{d\beta}{dt}&=&\matrix{cc}{\lambda & 0 \\ 0 & -1},\,\,\,
\beta(0)=\matrix{c}{\sd \\ 1}.
\end{eqnarray*}
%\end{rem}

\subsection{Infinitesimal phase response curve}
\label{sec:iPRC}
%We are now prepared to derive the infinitesimal phase response curve.  
We now derive the infinitesimal phase response curve.
Initially, we restrict attention to perturbations occurring within the first
quarter cycle, \textit{i.e.}~within the first square.  The PRC for perturbations occurring after $k$
additional border crossings is obtained \textit{via} a discrete rotation operation.  

We define the phase of a 
point on the limit cycle as in Equation~\ref{phase_def}, \textit{i.e.}~as the smallest amount of time required to reach that
point from the point where the limit cycle enters the first square.  
Therefore the asymptotic shift in crossing times translates into an asymptotic shift in phase as
\[
\Delta\varphi = \frac{4}{T}\Delta C_\infty^\prime = \frac{\Delta C_\infty^\prime}{\log(1/\ud)},
\]
recalling that $T=4T_1^\dagger=4\log(1/\ud)$.  
%Because motion along the limit cycle naturally decomposes into traversals of four identical regions (modulo the discrete rotation) we introduce a rescaled phase variable 
%\begin{equation}
%\varphi=\frac{2\theta}{\pi} = \frac{t}{\log(1/\ud)}.
%\end{equation}
Since 
% kms483
% was: $e^{t}=(\ud)^\varphi$ 
$e^{-t}=(\ud)^\varphi$ 
we may write $\beta$ in the form 
\begin{equation}
\label{eq:beta-def}
\beta(\varphi)=\left(\beta_s(\varphi),\beta_u(\varphi)\right)
=\left((\ud)^{\lambda(1-\varphi)},(\ud)^\varphi \right)
=\left((\sd)^{(1-\varphi)},(\ud)^\varphi \right)
,
\end{equation}
simplifying the subsequent analysis.
Thus the asymptotic change in phase for a perturbation at phase $\varphi\in[0,1)$
is  
%\begin{equation}
$$
\Delta \varphi = \frac{\eta\cdot\beta(\varphi)}
{\log(1/\ud)(\ud-\lambda\sd)}r+o(r).
$$
The infinitesimal phase response curve, obtained by taking the limit of $\Delta\varphi/r$ as $r\to 0$, is therefore
$$Z(\eta,\varphi,a) = \frac{\eta\cdot\beta(\varphi)}
{\log(1/\ud)(\ud-\lambda\sd)},$$
which is Equation \ref{eq:PRC-thm}.

For a perturbation at phase $\varphi\ge 1$, let $k$ be the number of boundary crossings
preceding the perturbation, \textit{i.e.}~the positive integer satisfying $\varphi\in[k,k+1)$. 
%\begin{equation}
%0\le\theta-k\frac{\pi}{2}<\frac{\pi}{2}.
%\end{equation}
Define a $k$-fold quarter rotation operation on the phase variable and the direction of the perturbation by 
\begin{align}
\varphi &\to\varphi^\prime=\varphi-k\\
\eta &\to \eta^\prime=\mathcal{R}^k\eta
\end{align}
where $\mathcal{R}=\left(\begin{array}{rr}0&-1\\1&0\end{array} \right)$.
Then the infinitesimal phase response due to a perturbation in direction $\eta$ at phase $\theta>0$ is given by 
$Z(\eta^\prime,\varphi^\prime,a)$.

The dependence of the PRC on the phase is entirely contained in the term $\eta\cdot\beta$.  Consider the components of the response to perturbation in the horizontal and vertical directions.  When $\varphi\in[0,1)$ the component $\beta_s$ along the direction of the stable eigenvector corresponds to $\beta_x$, the component in the horizontal direction.  Similarly the components along the unstable direction $\beta_u$ and the 
% kms483
% was: horizontal
vertical
component $\beta_y$ are identical.  Moving clockwise around one full orbit we observe that:
$$\begin{array}{lllll}
\mbox{when} & \varphi\in[0,1), & \beta_x = \beta_s & \mbox{and} & \beta_y=\beta_u;\\
\mbox{when} & \varphi\in[1,2), & \beta_x = \beta_u & \mbox{and} & \beta_y=-\beta_s;\\
\mbox{when} & \varphi\in[2,3), & \beta_x = -\beta_s & \mbox{and} & \beta_y=-\beta_u;\\
\mbox{and when} & \varphi\in[3,4), & \beta_x = -\beta_u & \mbox{and} & \beta_y=\beta_s.\\
\end{array}$$
For each component the resulting phase response curve consists of two continuous segments separated by a jump of size $(\ud+\sd)$, with antisymmetry $\beta(\varphi+2)=-\beta(\varphi)$. 
The horizontal response component, $\beta_x$, peaks at $\beta_x(1)=1$ and is strictly positive for $\varphi\in[0,2)$.  Similarly, $\beta_y$ peaks at $\beta_y(0)=1$ and is strictly positive for $\varphi\in[0,1)\cup [3,4)$. 
Hence the extrema of both components occur at phase values corresponding to the boundaries between adjacent squares.
\begin{rem}
This concludes the proof of Theorem \ref{thm:main}, Part 2.
\end{rem}

Figure \ref{iris_prc_fig} shows examples of the full iPRCs as a function of $\theta\in[0,4)$ for $\lambda=2$ and $a\in\{10^{-3}, 0.1, 0.2, 0.24\}$.   
\begin{figure}[htbp] \centering
\includegraphics[width=.9\figwidth]{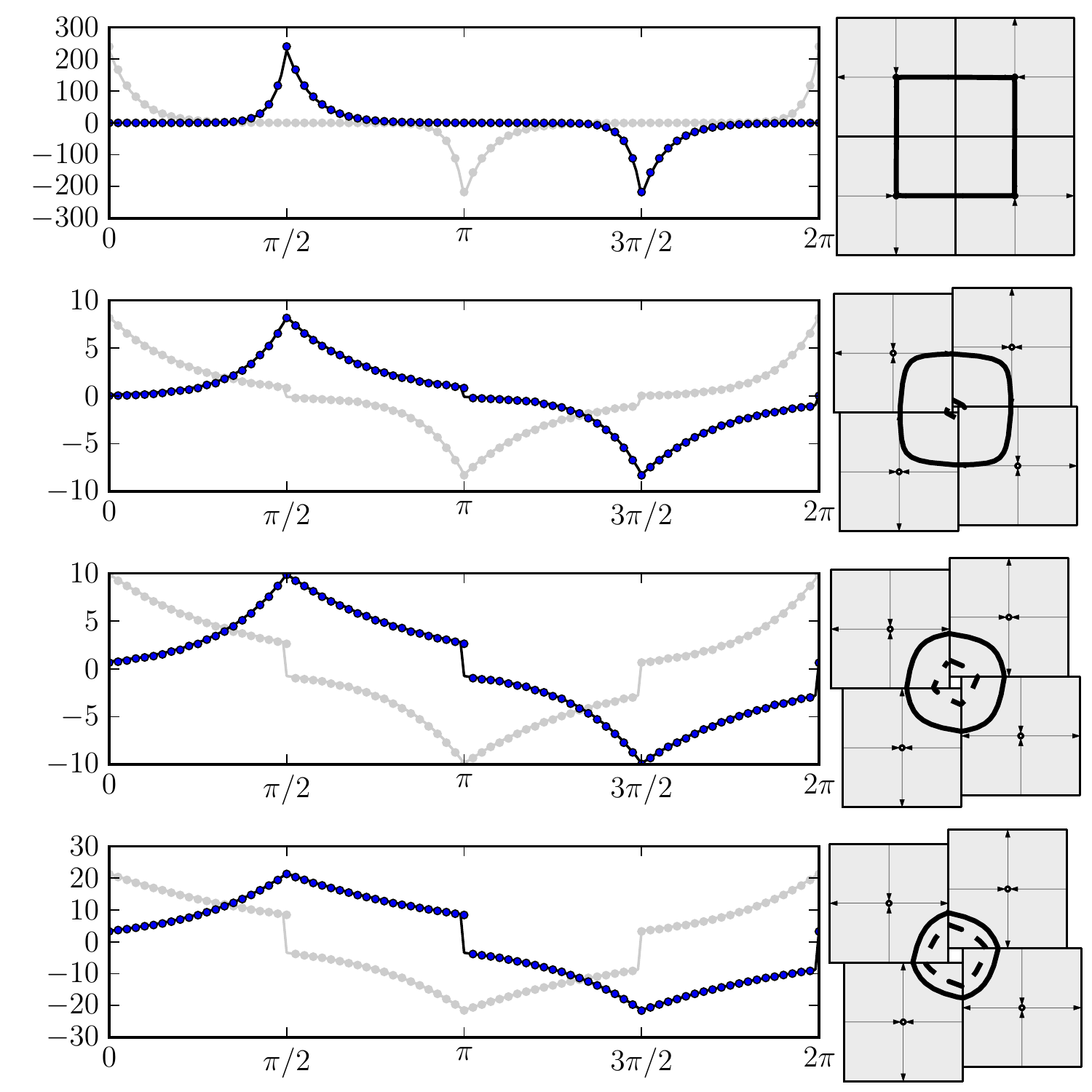}
\caption{Infinitesimal phase response curves (iPRCs) for the iris system with
various values of $a$.  As $a$ becomes small and the limit cycle approaches the
heteroclinic orbit, the phase response curve becomes dominated by peaks at the
edges between squares where flow changes from compressing trajectories outward
to expanding them inward.  As $a$ grows and the flow along the limit cycle
becomes more uniform, the iPRC becomes less sharply peaked.  
Line (dark blue): analytically obtained iPRC for a perturbation in the positive $x$ direction.  Points (dark blue): numerical
iPRC, calculated using an instantaneous
perturbation of $10^{-4}$ in the horizontal direction.  From top to bottom, $a=10^{-3}, 0.1, 0.2, 0.24$.
The light gray curve represents the corresponding iPRC for perturbations in the vertical (positive $y$) direction. The discontinuities
between the positive and negative portion of each curve are steps of height ($\ud+\sd$) (see \S \ref{sec:iPRC} for further details).}
\label{iris_prc_fig}
\end{figure}

\section{Asymptotic phase resetting behavior as $a\to 0$}
\label{sec:iris-PRC-asympt}

We wish to study the asymptotic behavior of the phase resetting curve at the heteroclinic limit, that is, as the parameter $a\to 0$.  This limit corresponds, at least by analogy, with the limit $\mu\to 0$ in the smooth vector field system of Equations \ref{eq:rotated_smooth_sho}.

\begin{lem}\label{lem:asymptotics1}
Under the conditions of Lemma~\ref{first_pass_lem}, the limit cycle entry value  scales as
$\ud=a+o(a)$, as $a\to 0$. 
\end{lem}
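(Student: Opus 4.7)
The plan is to treat $\ud$ as the smallest positive fixed point of the map $u \mapsto u^\lambda + a$, and to exploit the fact that for fixed $\lambda > 1$ this fixed point lies in an interval bounded away from $u=1$ no matter how small $a$ is. Rewriting $\rho(\ud)=0$ as $\ud = (\ud)^\lambda + a$, the leading order behavior in $a$ will come out once we control the nonlinear term $(\ud)^\lambda$, so the proof naturally proceeds as a two-step bootstrap: first a crude linear bound $\ud = O(a)$, then substitution back into the fixed point relation.

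For the first step, I would use the location of $\ud$ established in the proof of Lemma~\ref{lc_existence_lem}: the smaller root lies in $\bigl(0,(u_i)_{\min}\bigr)$ with $(u_i)_{\min}=\lambda^{1/(1-\lambda)}$. A direct computation gives $((u_i)_{\min})^{\lambda-1}=\lambda^{-1}$, so $\ud < (u_i)_{\min}$ implies
\begin{equation*}
(\ud)^\lambda \;=\; (\ud)^{\lambda-1}\cdot \ud \;<\; \frac{\ud}{\lambda}.
\end{equation*}
Substituting this inequality into $\ud=(\ud)^\lambda+a$ yields $(1-1/\lambda)\ud < a$, hence the crude linear bound $\ud < \lambda a /(\lambda-1)$.

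For the second step, since $\lambda > 1$ this bound gives $(\ud)^\lambda \le \bigl(\lambda a/(\lambda-1)\bigr)^\lambda = O(a^\lambda) = o(a)$ as $a \to 0^+$. The fixed point relation $\ud = a + (\ud)^\lambda$ then delivers the claimed expansion $\ud = a + o(a)$ at once. As a side product, the same calculation shows $\sd = (\ud)^\lambda = a^\lambda + o(a^\lambda)$, which will be useful in later asymptotic computations.

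The argument is essentially elementary and I do not expect a serious obstacle. The only point worth flagging is that the bootstrap would be circular without the crude bound $\ud = O(a)$ — one cannot \emph{a priori} say that $(\ud)^\lambda$ is subdominant to $a$ without first knowing $\ud$ is small. The required input, namely $\ud < (u_i)_{\min}$ and the value of $(u_i)_{\min}$, is already available from Lemma~\ref{lc_existence_lem}, so no new machinery is needed.
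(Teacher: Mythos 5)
Your proof is correct, and it takes a genuinely different (and slightly stronger) route than the paper's. Both arguments start from the fixed-point relation $\ud = a + (\ud)^\lambda$, but the paper argues softly: since $\lambda>1$, $(\ud)^\lambda = o(\ud)$ as $\ud\to 0$, so $\ud = a + o(\ud)$, and the $o(\ud)$ error is then converted into $o(a)$ by an explicit $\epsilon$--$\delta$ manipulation, writing $(\ud-a)/a = x/(1-x)$ with $x = 1-a/\ud$ small. That conversion tacitly relies on $\ud\to 0$ as $a\to 0^+$, which the paper leaves implicit (it remarks only that $d\ud/da=1$ at $\ud=0$). Your two-step bootstrap makes precisely this point quantitative: the root location $\ud \le (u_i)_{\min}$ from Lemma~\ref{lc_existence_lem}, together with $\bigl((u_i)_{\min}\bigr)^{\lambda-1} = \lambda^{-1}$, yields the explicit linear bound $\ud \le \lambda a/(\lambda-1)$, which both establishes $\ud\to 0$ outright and upgrades the conclusion to the sharper $\ud = a + O(a^{\lambda})$, with the useful byproduct $\sd = a^{\lambda} + o(a^{\lambda})$. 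So your version buys an explicit error rate and closes the paper's small implicit step, at the modest cost of importing the root-location information from Lemma~\ref{lc_existence_lem}. One pedantic caveat: at the fold bifurcation the roots merge and $\ud = (u_i)_{\min}$, so your strict inequality degenerates to equality there; but since the fold occurs at a value of $a$ bounded away from zero, this is irrelevant to the $a\to 0$ asymptotics, and in any case the non-strict bound suffices for your argument.
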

\begin{proof}
Recall that  $\ud$ is the smaller root of $\rho(u)$, so $\ud=a+(\ud)^\lambda$.  Since $\lambda>1$, clearly 
$\ud=a+o(\ud)$, as $\ud\to 0$, and $\ud>a$ whenever $\ud>0$.  For sufficiently small $a>0$ we have  $\ud$ as an implicitly defined function of $a$.  It is straightforward to see that $d\ud/da=1$ when $\ud=0$.  We wish to show that $\ud=a+o(a)$ as well, \textit{i.e.}~that $\lim_{a\to 0}(\ud-a)/a=0$.  Since $\ud=a+o(a)$, for any $\epsilon>0$ we can find a $\delta>0$ such that $1-a/\ud<\mbox{min}[\epsilon/2,1/2]$ whenever $0<\ud<\delta$. If $a<\delta$ then $(\ud-a)/a=1/(1-(1-a/\ud))-1=1/(1-x)-1=x/(1-x)$ where $x=1-a/\ud$.  Since $x<\mbox{min}[\epsilon/2,1/2]$, we are guaranteed that $x/(1-x)<\epsilon$, as required. \end{proof}

As $a$ decreases, the peaks of the iPRC grow, while their width shrinks.  The integral under each strictly positive region is 
\begin{equation}
V=\int_0^2 \beta_x(\varphi)\,d\varphi = \frac{1+\lambda-\lambda\ud-(\ud)^\lambda}{\lambda\log(1/\ud)}
\end{equation}
which goes  to zero as $\ud\to 0$ or equivalently as $a\to 0$.  Lemma \ref{lem:alpha-asympt} characterizes the behavior of the ``normalized'' iPRC $\alpha(\varphi)=\beta(\varphi)/V$ as $a\to 0$.
\begin{lem} 
\label{lem:alpha-asympt}
For the iris system the vector $\alpha(\varphi)=\beta(\varphi)/V$ has the following properties
as $a\to 0$:
\begin{enumerate}
\item For $\varphi=1$ or $3$, $\alpha_x\to\pm\infty$, respectively. For $\varphi\not\in\{1,3\}$, $\alpha_x\to 0$.
\item For $\varphi=0$ or $2$, $\alpha_y\to\pm\infty$, respectively. For $\varphi\not\in\{0,2\}$, $\alpha_y\to 0$.
\item  $\int_0^4||\alpha(\varphi)||\,d\varphi=4$ (in the $L_1$ norm).
\end{enumerate}
\end{lem}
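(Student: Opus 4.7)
The plan is to combine Lemma~\ref{lem:asymptotics1} with the explicit piecewise formulas for $\beta$ established in \S\ref{sec:iPRC} and with the asymptotic behavior of $V$, then verify the $L_1$ integral identity by direct computation.

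First I would record the relevant asymptotics as $a\to 0$. By Lemma~\ref{lem:asymptotics1}, $\ud=a+o(a)\to 0$, and since $\sd=(\ud)^\lambda$ with $\lambda>1$, $\sd\to 0$ strictly faster. Substituting into the definition of $V$,
\begin{equation*}
V=\frac{1+\lambda-\lambda\ud-(\ud)^\lambda}{\lambda\log(1/\ud)}
=\frac{1+\lambda+O(a)}{\lambda\log(1/a)\,(1+o(1))},
\end{equation*}
so $V\to 0$ at the logarithmic rate $1/\log(1/a)$. This is the key rate: anything that vanishes as a positive power of $a$ will kill $1/V$, while anything bounded away from zero will be amplified to $\infty$ by $1/V$.

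Next I would prove parts~1 and~2 by inspecting $\beta_x$ and $\beta_y$ on each of the four subintervals $[k,k{+}1)$ using the rotation table given at the end of \S\ref{sec:iPRC}. On each subinterval the relevant component of $\beta$ equals $\pm(\sd)^{1-\varphi'}$ or $\pm(\ud)^{\varphi'}$ with $\varphi'=\varphi-k\in[0,1)$. For $\varphi$ strictly interior to a subinterval, this is a strictly positive power of $\ud$ (hence of $a$), which decays faster than $V^{-1}$ grows; so $\alpha_x,\alpha_y\to 0$. At the integer phases $\varphi\in\{0,1,2,3\}$ exactly one component of $\beta$ attains the value $\pm 1$: at $\varphi=0$ and $\varphi=2$, $\beta_y=\pm 1$ while $\beta_x\to 0$; at $\varphi=1$ and $\varphi=3$, $\beta_x=\pm 1$ while $\beta_y\to 0$. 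Dividing the $\pm 1$ values by $V\to 0$ gives the stated $\pm\infty$ limits with the signs dictated by the sign table.

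For part~3 I would compute the total variation directly. Using the four-fold rotational symmetry of $\beta$, $\int_0^4|\beta_y|\,d\varphi=\int_0^4|\beta_x|\,d\varphi$. On the four subintervals $|\beta_x|$ equals, in order, $(\sd)^{1-\varphi}$, $(\ud)^{\varphi-1}$, $(\sd)^{3-\varphi}$, $(\ud)^{\varphi-3}$, so
\begin{equation*}
\int_0^4|\beta_x|\,d\varphi
=2\int_0^1(\sd)^{1-\varphi}\,d\varphi+2\int_0^1(\ud)^{\varphi}\,d\varphi
=\frac{2(1-\sd)}{\log(1/\sd)}+\frac{2(1-\ud)}{\log(1/\ud)}.
\end{equation*}
Using $\log(1/\sd)=\lambda\log(1/\ud)$ and combining over the common denominator $\lambda\log(1/\ud)$ reproduces exactly $2V$. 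Hence $\int_0^4\|\beta\|_{L_1}\,d\varphi=4V$ and dividing by $V$ gives $\int_0^4\|\alpha\|_{L_1}\,d\varphi=4$.

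I expect the main obstacle to be purely bookkeeping: making sure the sign conventions and index shifts from the rotation $\mathcal{R}$ in \S\ref{sec:iPRC} are applied consistently when identifying which component of $\beta$ hits $\pm 1$ at each of the four integer phases, and verifying that the one-sided limits from adjacent subintervals give the signs claimed. The analytical content (power of $a$ beats $\log(1/a)$, and the elementary integration that reproduces $V$) is straightforward once Lemma~\ref{lem:asymptotics1} is in hand.
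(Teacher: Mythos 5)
Your proof is correct and takes essentially the same route as the paper: compare $\beta$ at integer phases (where one component equals $\pm 1$) against $V\to 0$, note that at interior phases the relevant component of $\beta$ is a positive power of $\ud$, and obtain the $L_1$ identity from the fourfold symmetry together with the formula for $V$. If anything, you are more explicit than the paper in two places: you spell out why the power-of-$a$ decay of $\beta$ beats the merely logarithmic decay of $V$ (the paper's proof only observes that the numerator tends to zero, which by itself would not settle $\alpha\to 0$ since $V\to 0$ as well), and for part~3 you verify $\int_0^4|\beta_x|\,d\varphi=2V$ by direct integration where the paper simply invokes the definition $V=\int_0^2\beta_x\,d\varphi$ and the antisymmetry $\beta(\varphi+2)=-\beta(\varphi)$.
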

\begin{proof}
As $a\to 0$ both $V\to 0$ and $\ud\to 0$.  
At $\varphi=1$, $\beta_x\equiv 1$ for all $a>0$, so as $a\to 0$ the ratio $\beta_x(1)/V\to+\infty$. On the other hand, for fixed $\varphi\in[0,2)\backslash\{1\}$, $(\ud)^{\lambda(1-\varphi)}\to 0$ as $\ud\to 0$.  
The behavior of $\alpha_x$ or $\alpha_y$ within the other intervals follows from the symmetry relations discussed in the preceding paragraph.  Thus 1 and 2 are established. For 3, note that by construction each interval of length two centered on a peak of one component makes a unit contribution to the integral.
That is,
$$1=\int_{[0,2)} \alpha_x(\varphi)\,d\varphi = -\int_{[2,4)} \alpha_x(\varphi)\,d\varphi = - \int_{[1,3)} \alpha_y(\varphi)\,d\varphi= \int_{[3,4)\cup[0,1)} \alpha_y(\varphi)\,d\varphi.$$
The integral of the $L_1$ norm $||\alpha(\phi)||$ is the sum of these terms.
\end{proof}
\begin{rem}
Lemma \ref{lem:alpha-asympt} and the symmetry of $\alpha$ together imply that each component of $\alpha$ converges weakly to a sum of delta function distributions on the circle:
\begin{eqnarray}
\alpha_x(\varphi)&\to&\delta(\varphi-1)-\delta(\varphi-3)\\
\alpha_y(\varphi)&\to&\delta(\varphi)-\delta(\varphi-2),
\end{eqnarray}
as $a\to 0$.
\end{rem}

%Thus as the system approaches the heteroclinic bifurcation, or $a\to 0$, the intersection $u$ of the limit cycle with the boundary of each square approaches zero; the period of the limit cycle $T$ diverges as $\log(1/u^4)$, and the phase response curve shows arbitrarily large sensitivity to perturbations in the immediate vicinity of each saddle point, and relatively less sensitivity at all intermediate points.  However, this result does not clarify the behavior of it is still possible that the size of the infinitesimal phase response curve 

Rewriting the full PRC in terms of $\alpha$ gives
\begin{equation}
Z(\eta,\varphi,a)=\frac{\eta\cdot\alpha(\varphi)\,V}{\log(1/\ud)(\ud-\lambda\sd)}
=
\left(\eta\cdot\alpha(\varphi)\right)M(\ud)
\end{equation}
where $M(\ud)=\frac{1+\lambda-\lambda\ud-(\ud)^\lambda}{\lambda\left(\log(1/\ud)\right)^2(\ud-\lambda(\ud)^\lambda)}$.  The function $M$ represents the magnitude of the PRC and diverges as $\left(\ud\log^2(\ud)\right)^{-1}$ as $\ud\to 0$.

The asymptotic phase response to an arbitrary instantaneous perturbation in direction $\eta$ is a sum of the response
to the perturbation component along the stable eigenvector direction, $\eta_s$ and along the unstable eigenvector direction, $\eta_u$, within whichever square the perturbation occurs.  The asymptotic behavior of the phase response in the heteroclinic limit for each component is distinct.  We rewrite the PRC to emphasize these contributions thus (restricted, for convenience, to the SW square, or $\varphi\in[0,1)$):
\begin{equation}
Z(\eta,\varphi,a)=\frac{\eta_s\left((\sd)^{(1-\varphi)}\right)+\eta_u\left((\ud)^\varphi\right)}
{\log(1/\ud)(\ud-\lambda\sd)}.
\end{equation}

First consider the response to perturbation along the unstable direction.  Recall that for any $p>0$,
$u^p\log u\to 0$ as $u\to 0^+$.  It follows after a brief calculation that for any $\varphi\in[0,1)$
\[
\lim_{u\to 0^+}\left|\frac{u^\varphi}{\log(1/u)(u-\lambda u^\lambda)}\right|=+\infty.
\]
This result means that regardless of the phase of the perturbation, the sensitivity of the asymptotic phase to small displacements parallel to the unstable eigenvector diverges as the system approaches the heteroclinic bifurcation.  The result is intuitively appealing because when the system is close to the heteroclinic bifurcation the long period leads to a larger ``compounding'' effect enhancing the cumulative result of a small perturbation within a given square.  However, as is often the case when dealing with nested limits, intuition can be misleading.
In contrast to the preceding situation, consider the response to perturbation along the stable direction.  The behavior of the limit now depends on the parameter $\lambda$:
\[
\lim_{u\to 0^+}\left|\frac{u^{\lambda(1-\varphi)}}{\log(1/u)(u-\lambda u^\lambda)}\right|=\left\{\begin{array}{ll}0,& \varphi\in[0,1-1/\lambda] \\ +\infty,& \varphi\in(1-1/\lambda,1) \end{array} \right..
\]
Thus, in the heteroclinic limit, small perturbations parallel to the stable eigenvector become inconsequential if they occur at an early enough phase.  At a particular phase,
\begin{equation}
\varphi_*=1-\frac{1}{\lambda}, 
\end{equation}
the response becomes highly sensitive to arbitrarily small perturbation. 

\begin{rem}This concludes the proof of Theorem \ref{thm:main}, Part 3.\end{rem}

For comparison we can obtain from Equations \ref{eq:closest} the fractional phases at which the trajectory is closest to the saddle point (in the first square) or moving at the slowest speed:
\[
\varphi_{\mbox{closest}}=\frac{1-\log\lambda/(2\log \ud)}{\lambda+1}, 
\hspace{1in}
\varphi_{\mbox{slowest}}=\frac{1-3\log\lambda/(2\log \ud)}{\lambda+1}. 
\]
As $\ud\to 0$ both phases converge to $\varphi_0=1/(\lambda+1)$, which is distinct from the phase at which the PRC's asymptotic sensitivity to perturbations in the stable direction changes.  Curiously, the asymptotic value $\varphi_0$ of the phases of the slowest and closest point along the trajectory coincides with $\varphi_*$, the critical phase for sensitivity, precisely when $\lambda$ is equal to the golden ratio, $(1+\sqrt{5})/2\approx 1.618\cdots$, because under those conditions $1 - 1/\lambda = 1/(\lambda + 1)$.

\section{Isochrons}\label{sec:isochrons}

Provided $a>0$ in the iris system, we may define isochrons as the level sets of the asymptotic phase function $\theta(x)$, for any point $x$ in the basin of attraction of the limit cycle.  
As described in \cite{BrownMoehlisHolmes:2004:NeComp,Guckenheimer+Holmes1990,Izhikevich2007} and elsewhere, the points on a given isochron will converge over time
to a single point on the limit cycle with a particular phase.  Thus, in a 
sense, they represent points with the same asymptotic ``time".  
As discussed above, given a limit cycle 
 $\{\gamma(t)\}_{t=0}^{T_a}$
the asymptotic phase of a point $x_0$ is defined as the unique value in $\theta\in[0,\theta_{\mbox{max}})$ such  that the limit
\begin{equation}
\lim_{t\to\infty}|x(t)-\gamma(t+\theta(x_0))|=0,
\end{equation}
where $x(0)=x_0$ is the initial condition for the trajectory. 
The usual approach for finding isochrons is based on 
an adjoint equation method, \textit{cf.}~(\cite{BrownMoehlisHolmes:2004:NeComp}, Appendix A; see also \cite{ErmentroutTerman2010book}, Chapter 11).
  From the chain rule, the phase field must satisfy
\begin{equation}\label{eq:adjoint}
\frac{d}{dt}\theta(x(t))=\left[\left(\vec{\nabla}\theta\right)\left(x(t)\right)\right]\cdot\frac{dx}{dt} = \frac{\theta_{\mbox{max}}}{T(a)}
\end{equation}
where $T(a)$ is the period of the limit cycle for a given value of $a>0$.    
Let $(0\le s \le 1, 0\le u \le 1)$ be local coordinates relative to any saddle
of the iris system, \textit{i.e.}~such that the flow satisfies $\dot{s}=-\lambda s, \dot{u}=u$.  Writing $\theta_v$ for $\partial\theta/\partial v$, Equation \ref{eq:adjoint} reads
 \begin{equation}
-\lambda s\theta_s + u \theta_u = \theta_{\mbox{max}}/T(a).
\end{equation}
By virtue of the fourfold rotational symmetry of the iris system, the following boundary conditions are required in addition to the PDE \ref{eq:adjoint}:  for $0<s<1-a$, 
\begin{equation}
% kms483
% was \theta(s,1)=\theta(1,s+a)+\pi/(2T(a))
\theta(s,1)=\theta(1,s+a)+1
\end{equation}
The boundary condition renders the PDE nonlinear, and a simple closed form solution is not available.  Solving the PDE numerically, however, is equivalent to finding the isochrons \textit{via} direct simulation of trajectories for a suitable grid of initial conditions.  We used a 400$\times$400 square mesh of initial conditions within each square, tracking solutions until either they left the system of large squares (initial conditions on the interior of the unstable limit cycle) or converged with a suitable tolerance to a small neighborhood of the limit cycle, at which point the asymptotic phase could be assigned.  Figure \ref{iris_isochron_fig} shows an example of the isochrons generated with this technique. 
\begin{figure}[htpb] \centering
\includegraphics[width=.9\figwidth]{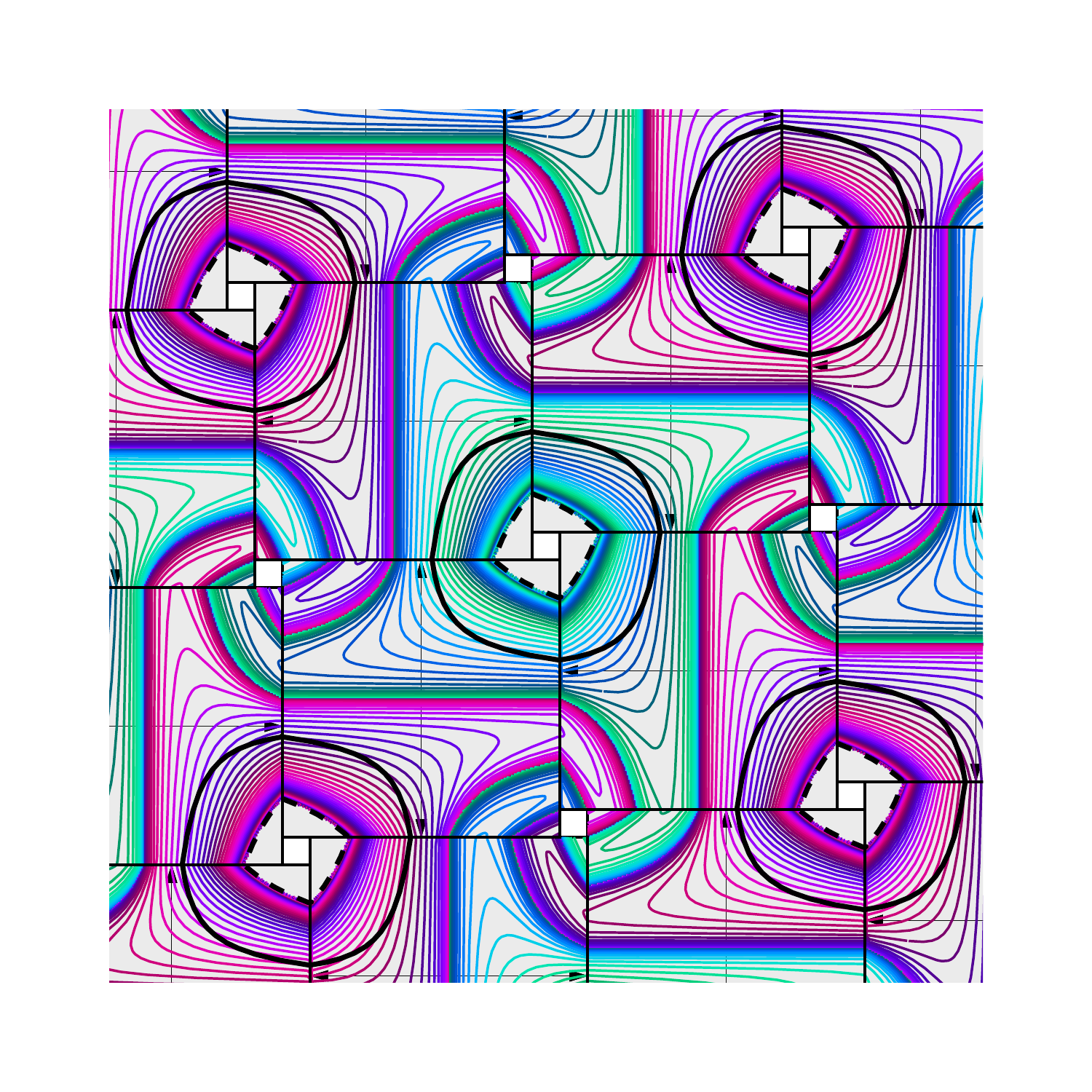}
\caption{Isochrons for the iris system obtained \textit{via} numerical iteration of the map from initial conditions to a sequence of boundary crossing.  % limited to the domain such that the $s$-coordinate of the egress point, $s_f$, satisfies $0\le s_f+a \le u_i^{\ddag}$, where $u_i^{\ddag}$ is the entry point corresponding to the \emph{unstable} limit cycle given by Equation \ref{QQQ}.  
The iris system on the torus has two distinct stable limit cycles for $a>0$, illustrated here in two color schemes (green-blue and fuschia-violet).  The tiling of the plane by periodic extension of the basic torus system is shown.  Solid lines indicate the stable limit cycles, dashed lines indicate the unstable limit cycles.  All limit cycles rotate clockwise.  The flow inside the small squares is not defined, and points inside the unstable limit cycles are not part of the basin of attraction of the stable limit cycles.  The stable manifolds of each saddle comprise the separatrices between the basins of attraction of distinct stable limit cycles.  
The system shown has $a=.2$ and $\lambda=2$.  Colors (online) indicate the phase.
Note the compression of the level curves near the boundaries between adjacent squares, which correspond to peaks in the phase response curve.
Note also that the isochrons are \emph{not} strictly parallel to the stable eigenvalue direction (\textit{c.f.}~the discussion in \S \ref{ssec:homoclinic}).  Please see corresponding movie (file: \texttt{iris\_isochrons.mpg}); this   animation illustrates the flow together with the isochron structure, by evolving points of equal phase forward together in time. After five seconds the animation changes to showing isochron structures for different values of the unstable/stable manifold offset parameter $a$, from $a=0$ to $a=0.247$ and back again.
}
\label{iris_isochron_fig}
\end{figure}

\section{Smooth System}
\label{sec:smooth}

In order to compare the family of infinitesimal phase response curves obtained
for the iris system with that of a family of continuous vector fields
experiencing a similar heteroclinic bifurcation with fourfold symmetry, we
numerically evaluated the phase response for small instantaneous perturbations in the
horizontal and vertical directions for limit cycles given by the system
\ref{eq:smooth_sho} for positive values of the twist parameter $\mu$.  Figure
\ref{sine_prc_fig} shows good qualitative agreement with the concentration of
phase response sensitivity at points along the trajectory near integer values
of the phase $\varphi$, \textit{cf.}~Figure \ref{iris_prc_fig}.

\begin{figure}[htpb] \centering
\includegraphics[width=\figwidth]{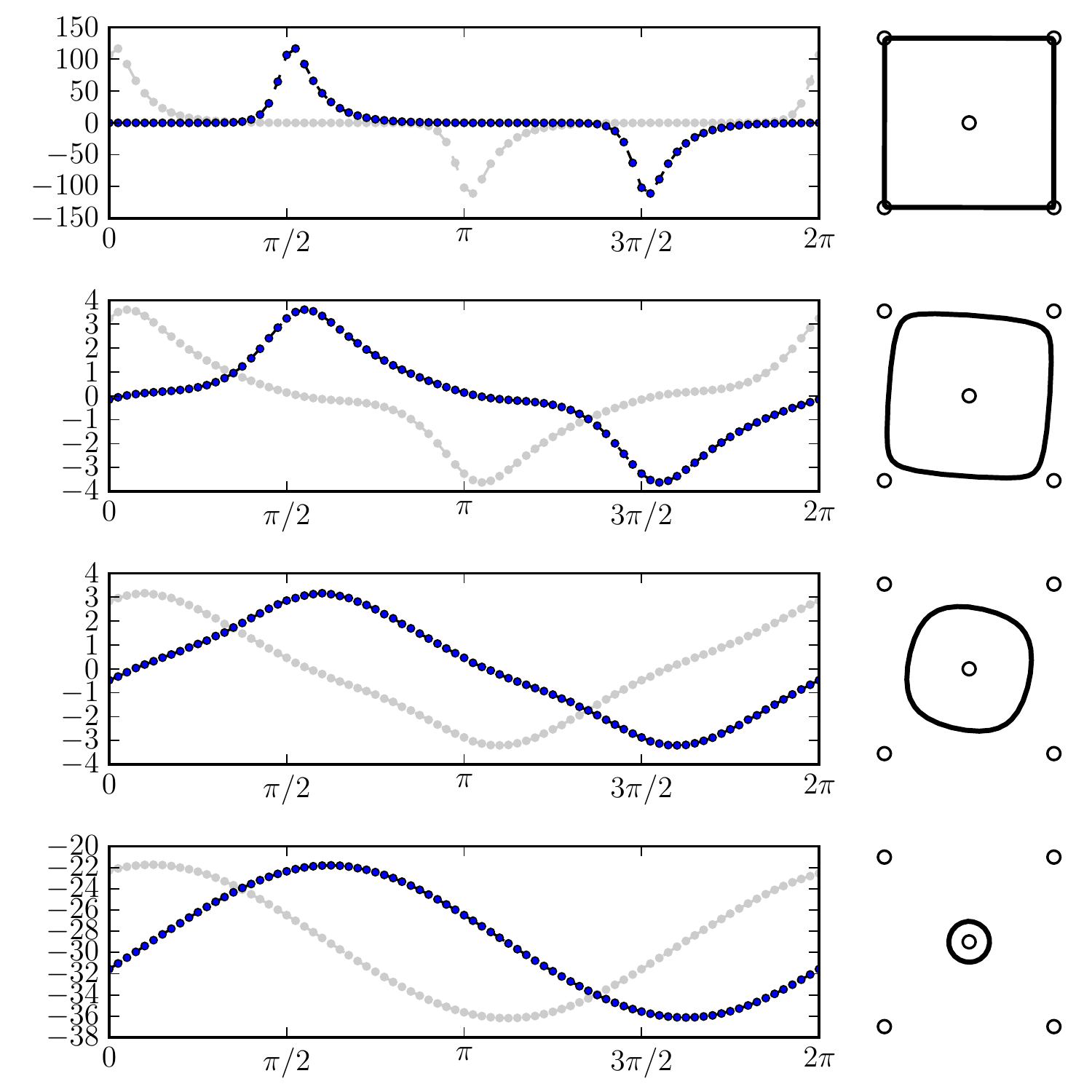}
\caption{Infinitesimal phase response curves for the smooth system with
various values of $\mu$.  As $\mu$ becomes small and the limit cycle approaches 
the heteroclinic orbit, the phase response curve becomes dominated by peaks 
much like the iris system shown in Figure~\ref{iris_prc_fig}.  As $\mu$
grows, the phase response curve becomes a sinusoidal as the system approaches 
an Andronov--Hopf bifurcation.  Points connected by dashed line (dark blue): numerical
infinitesimal phase response curve, calculated using an instantaneous
perturbation of $10^{-4}$ in the horizontal direction. 
Dots (dark blue): infinitesimal phase response curve for perturbations in the 
horizontal direction.
 Light gray: infinitesimal phase response curve for perturbations in the vertical direction.  
  From top to bottom, $\mu= 10^{-3}, 0.1, 0.3, 0.45$.}
\label{sine_prc_fig}
\end{figure}

\section{Discussion}

The iris system introduced here is one of only a handful of nonlinear dynamical systems for which an explicit form for the limit cycle or the phase response curve is known.  For this system we show analytically that the sensitivity of the response to small instantaneous perturbations depends (as one would expect) on the distance to the heteroclinic bifurcation, on the phase at which the perturbation occurs, and the direction of the perturbation in the phase plane.  In addition, we find the intuitively appealing result that regardless of phase, as the static perturbation away from the heteroclinic vector field diminishes, the phase response as measured by the iPRC becomes hypersensitive to any small perturbation parallel to the unstable eigenvector direction, in that the iPRC diverges as the bifurcation parameter of the iris system $a\to 0$.  Unexpectedly, however, we also find that for perturbations along the \emph{stable} eigenvector direction, the response to some will diverge while the response to others will have no effect in the limit as $a\to 0$.  The system appears ultrasensitive in this case to the phase at which the perturbation occurs, with perturbations sufficiently far in advance of the approach towards the saddle point effectively absorbed by the flow, in contrast to perturbations beyond a critical phase, $\varphi_*=1-1/\lambda$, which diverge as $a\to 0$.  From a biological point of view, this dual sensitivity to the timing and direction of the transient perturbation is significant if flows structured by fixed points are to be exploited in nature as control points for rhythmic behaviors.  In neural systems, for instance, it is commonplace to consider perturbations restricted to a single dimension in a multidimensional flow, namely perturbations along the dimension of membrane potential \cite{BrownMoehlisHolmes:2004:NeComp}.  When a neuron's membrane potential lies in the linear regime, its dynamics is typically dissipative due to membrane conductances, tending to align the voltage direction with the stable direction for a subthreshold fixed point, at least for some portion of the flow. In general, if slowly adaptive modulatory processes within an organism's control circuitry can impose quasistatic perturbations of an existing vector field to move trajectories closer to or farther away from a homo- or heteroclinic bifurcation point, the modulatory processes can filter which perturbations the system becomes sensitive to at different phases of the ongoing oscillation.  Investigating these possibilities in specific systems such as the feeding central pattern generator of the marine mollusk \textit{Aplysia} will make it possible to test this prediction empirically.

%The piecewise linear equations used in this derivation may be useful for the general modeling of limit cycles near heteroclinic bifurcations.  Rabinovich and others have proposed that many biological systems  may operate in a region of parameter space near a bifurcation containing an attracting heteroclinic orbit \cite{RabinovichEtAl2006RMP}.  This region of parameter space allows the system to take a very different trajectory based on a small perturbation that changes which side of the saddle the trajectory exits from, thus capturing a form of decision making or sensory discrimination.  Because the eigenvalues (and thus the timing and degree of compression) can be changed independently for each saddle, this type of piecewise linear model may be useful for fitting biological data to test this hypothesis.  [KEEP? REWRITE?]

\subsection{Sensitivity and control}

Our results suggests that for systems in which a control parameter can change the approach of trajectories towards a hyperbolic saddle fixed point with a one dimensional unstable manifold, the sensitivity to specific kinds of perturbations could be actively managed by manipulating the critical phase $\varphi_*$. This is a novel kind of control for rhythmic behaviors; additional mechanisms for control include manipulating the period of a typical trajectory by controlling the time spent near a saddle point, and regulating the variable dwell time in different states for a limit cycle trajectory passing near multiple unstable fixed points.  

Oscillations arising from co-dimension one bifurcations other than a saddle node-homoclinic have phase response curves near the bifurcation point that are smooth and sinusoidal.  By contrast, many phase response curves observed in real systems show sharp transitions between small and large responses, even in systems generating  highly regular behaviors such as swimming in the lamprey. Lamprey motor units participating in the CPG underlying smooth swimming showed broad regions of near zero phase response combined with steep changes in sensitivity in other regions, going from zero to peak phase response in an interval corresponding to about 10\% of the limit cycle duration (\textit{c.f.}~Figure 6 of \cite{VarkonyiKiemelHoffmanCohenHolmes:2008:JCNS}).  Preliminary data obtained from the marine mollusk \textit{Aplysia californica} during feeding behavior show extended, variable dwell times in preferred regions of phase space separated by rapid transitions from region to region \cite{Shaw-Chiel-Thomas-2010-SFN-poster}, suggesting
the possibility that this central pattern generator's dynamics may resemble that of a limit cycle making close encounters with multiple unstable fixed points.  

The infinitesimal PRC is equal to the gradient of the asymptotic phase function evaluated at the limit cycle.  For the iris system, the level curves appear to pinch together at the boundaries where they abruptly change direction, and this pinching means the density of the level curves is highest at the square boundaries.  
One might naively expect that the phase response should be 
%From a control point of view (or in terms of differential penetrance), this means that the timing of the orbit of the limit cycle is NOT the 
most sensitive to perturbations  immediately adjacent to the saddle points; instead the peak sensitivity occurs at boundaries between regions dominated by one or another saddle.  This effect is qualitatively present both in the piecewise linear iris system and in the smooth system considered here.  Within the piecewise linear regions of the iris system the flow is homogeneous; it is the boundaries that make the system nonlinear and it should not be surprising that that is where the greatest sensitivity occurs.  In the sine system, it appears that near the saddles the flow is governed by the local linear approximation, while in the region between saddles the flow is ``more nonlinear'' coinciding with greater  sensitivity to perturbation.  %It may be important here that we are looking at control by additive perturbations, which is what the PRC captures, but it may also be true that parametric perturbations -- varying the coefficients or the locations of the saddle points -- might make more difference near the saddles than away from them.  The way we used proprioception, for instance might be seen as multiplicative perturbation rather than additive?  

%I like the analogy with compounding interest, which is appropriate within the piecewise linear region.  The effects of a perturbation compound and the earlier the perturbation within the square the larger the net effect, provided the perturbation is along the ``correct" axis (the unstable axis, where perturbations are compounded).  Perturbations along the stable axis are discounted, but the later they occur the less they are discounted, and a stable axis perturbation just before the border is nearly equivalent to an unstable axis perturbation that has the maximal time to grow.  

\subsection{Comparison to the PRC near a homoclinic bifurcation}
\label{ssec:homoclinic}

While the structure and asymptotic behavior of the phase response curve described in Theorem \ref{thm:main} were derived specifically for the iris system, we expect the qualitative picture to hold for generic one-parameter families of limit cycles verging on a heteroclinic or a homoclinic bifurcation of $C^1$  vector fields as well.  Treating the general case lies beyond the scope of this paper; however, it will be useful to compare our results with the analysis of phase resetting near a homoclinic bifurcation given in (\cite{BrownMoehlisHolmes:2004:NeComp}, \S 3.1.3).  

In \BMH the authors consider a vector field on $\R^n$ with a hyperbolic saddle fixed point at the origin with a single unstable eigenvalue $\lambda_u$ and unstable eigenvalues $\lambda_{s,j}$ with $\lambda_u<\lambda_s=\min_{j}[\lambda_{s,j}]$.  For positive values of a bifurcation parameter $\mu$, the system is assumed to have a limit cycle that spends the overwhelming majority of its time in a box $B=[0,\Delta]^n$.  Trajectories exit the box when the coordinate along the unstable eigendirection $x=\Delta$ and are instantaneously reinjected with $x=\epsilon$.  When $n=2$ the geometry is similar to that of the iris system (compare Figure 3 of \BMH with our Figure \ref{fig:sine_to_iris_fig_iris}).
The main difference is that in the homoclinic system, the value of the injection point along the unstable limit coordinate is assumed to be independent of the egress point, whereas in the iris system its dependence is specified by the map $h=f_e\circ f_l$ (Figure \ref{iris_return_map_fig}).  Equivalently, it is as if the derivative of the function $h$  were zero at the fixed point $\ud$ rather than having finite slope.  This situation  would occur for the homoclinic system if the compression of the flow taking trajectories from the egress boundary to the ingress boundary were sufficiently great.   However, in analyzing the homoclinic system, one must also assume that the time spent along the portion of the limit cycle outside the box $B$ is vanishingly small.  Reconciling these two limiting processes for the general homoclinic case remains an interesting problem.  %Nevertheless it is worth comparing the iPRC obtained in the  iris system \textit{vs.}~the near-homoclinic system with infinite compression.  

For purposes of comparison, note that $\epsilon/\Delta$ in \BMH corresponds to $\ud$ for the iris system, and the phase $\theta\in[0,2\pi)$ in \BMH corresponds to $2\pi\varphi$ in the iris system.
Table \ref{tab:homoclinic} compares the infinitesimal PRCs for the two systems.  %The phase variable $\theta\in[0,2\pi)$ in \BMH corresponds to our variable $\varphi\in[0,1)$; the entry point of the limit cycle, $\ud$ in the iris system corresponds to $\epsilon/\Delta$, the distance of the reinjection point from the saddle along the unstable direction, relative to the size of the box $B$.  
In \BMH the perturbation is assumed to be in a particular direction corresponding to a voltage deflection in a neural model; the factor $\nu_x$ arises in the change of coordinates from the direction of voltage peturbations relative to the unstable eigenvector direction.  The flow is assumed to be infinitely compressive during the time between egress and reinjection, and consequently it is assumed that only components of the perturbation along the unstable eigendirection contribute to the iPRC.  This assumption holds for the iris system in the limit as the unstable-to-stable eigenvalue ratio $\lambda_u/\lambda_s=\lambda\to\infty$, but not for finite values of $\lambda$. The $\lambda<\infty$ case includes additional corrections; see  Table \ref{tab:homoclinic}.  

Finally, assuming that the phase response is independent of displacement along the stable eigenvector direction is equivalent to assuming that the isochrons run parallel to the stable eigenvector throughout the box $B$.  For the iris system the slopes of the isochrons do approach zero as $\lambda\to \infty$, becoming more and more parallel to the stable eigenvector, but for $\lambda=\infty$ the asymptotic phase and the isochrons are no longer well defined.  For finite values of $\lambda$ the isochrons are well defined, but are not parallel to the stable eigenvector.  Compare the isochrons in the SW quadrant of the iris system shown in Figure \ref{iris_isochron_fig}.

\begin{table}[tbp]
   \centering
   \renewcommand{\arraystretch}{2}
   \begin{tabular}{ccc}\hline
    System &$Z_u(\varphi)$ & $Z_s(\varphi)$\\ \hline
   Homoclinic & $\frac{(\nu_x/\Delta)u^{\varphi}}{u\log(1/u)}$ & 0\\
   Iris & $\frac{u^\varphi}{(u-\lambda s)\log(1/u)}$& $\frac{s^{(1-\varphi)}}{(u-\lambda s)\log(1/u)}$ \\
   \hline
   \end{tabular}
   \caption{Phase response curve for homoclinic system \BMH  and the iris system, compared.  For clarity, we write $u, 0<u<1,$ for $\ud=\epsilon/\Delta$, and the phase $0\le\varphi\le 1$ for both systems. Here $s=u^{\lambda}$;   in the $\lambda\to\infty$ limit $s\to 0$, and the systems coincide (up to a scaling factor $\nu_x/\Delta$ related to the choice of perturbation direction).  The iPRC for perturbations parallel to the unstable (resp.~stable) eigenvector direction is given by $Z_u$ (resp.~$Z_s$).} \label{tab:homoclinic}
\end{table}

\subsection{Phase resetting in the absence of an asymptotic phase}

Deterministic limit cycles are not the only way to represent (approximately) periodic biological rhythms.  Alternatives include heteroclinic or homoclinic attractors perturbed by small amplitude noise \cite{Bakhtin2010ProbThyRelatFields,StoneHolmes1990SIAMJApplMath} as well as fixed points of spiral sink type subject to small noisy perturbations \cite{BolandGallaMcKane2008JStatMech,Izhikevich2007}.
A system comprising a determinstic flow with a stable spiral fixed point, when perturbed by small to modest amounts of noise, will show noisy oscillatory trajectories that can be difficult to
distinguish from a small amplitude limit cycle \cite{BolandGallaMcKane2008JStatMech}.  For example,
the small oscillations in membrane potential of a nerve cell brought near to firing by a steady depolarizing current of modest size may be due either to ``noisy spiral sink" dynamics or to ``noisy small limit cycle" dynamics \cite{StiefelFellousThomasSejnowski2010EJN}.  The classical definition of asymptotic phase breaks down for noisy systems, for spiral sinks, and for heteroclinic or homoclinic orbits.  Nevertheless ``phase resetting" is actively studied in such systems both experimentally \cite{ErmentroutSaunders2006JCNS,ErmentroutGalanUrban:2008:TNsci,StiefelFellousThomasSejnowski2010EJN,StiefelGutkinSejnowski:2008:PLoSOne} and theoretically \cite{IchinoseAiharaJudd:1998:IntJBifChaos,RabinovitchRogachevskii:1999:Chaos,RabinovitchThiebergerFriedman:1994:PRE}.  For the Bonhoeffer--van der Pol equations, for example, Rabinovitch and colleagues showed how to define an analog of the classical isochron, for point converging to a spiral sink along a distinguished trajectory (a ``T-attractor") \cite{RabinovitchRogachevskii:1999:Chaos}.
Generally speaking, noise effects can expose the structure of bifurcations in neural systems through stochastic resonance \cite{GaiDoironRinzel:2010:PLoSComputBiol} or spike time bifurcations \cite{neurocomputing:ThomasEtAl:2003,ToupsEtAl2011NC-toappear}, and it would be useful to have a broader
theory of ``phase response'' going beyond the case of deterministic limit cycles.

\section{Acknowledgments}

%This work was supported by National Science Foundation grants DMS-0720142 (PJT) and DMS-1010434 (HJC \& PJT).  The authors thank C.~Turc and W.~Guo for helpful comments concerning solutions of the adjoint equation, and R.~Galan and A.~Horchler for helpful discussion.  

This work was supported by National Science Foundation grants DMS-0720142     
and DMS-1010434.  The authors thank C.~Turc and W.~Guo for helpful comments concerning solutions of the adjoint equation, and R.~Galan and A.~Horchler for helpful discussion.

\bibliographystyle{siam} 
\bibliography{math,neuroscience,PJT,Respiration,Dicty}

\begin{thebibliography}{10}

\bibitem{AfraimovichOrdazUrias2002Chaos}
{\sc V.~Afraimovich, F.~C. Ordaz, and J.~Urias}, {\em A class of cellular
  automata modeling winnerless competition}, Chaos, 12 (2002), pp.~279--288.

\bibitem{AfraimovichTristanHuertaRabinovich:2008:Chaos}
{\sc Valentin Afraimovich, Irma Tristan, Ramon Huerta, and Mikhail~I
  Rabinovich}, {\em Winnerless competition principle and prediction of the
  transient dynamics in a Lotka-Volterra model}, Chaos, 18 (2008), p.~043103.

\bibitem{AfraimovichYoungMuezzinogluMehmetRabinovich:2010:BullMathBiol}
{\sc Valentin Afraimovich, Todd Young, Mehmet~K Muezzinoglu, and Mikhail~I
  Rabinovich}, {\em Nonlinear dynamics of emotion-cognition interaction: When
  emotion does not destroy cognition?}, Bull Math Biol,  (2010).

\bibitem{AfraimovichRabinovichVarona2004IntJBifChaos}
{\sc Valentin~S. Afraimovich, Mikhail~I. Rabinovich, and Pablo Varona}, {\em
  Heteroclinic contours in neural ensembles and the winnerless competition
  principle}, International Journal of Bifurcation and Chaos {(IJBC)}, 14
  (2004), pp.~1195--1208.

\bibitem{AfraimovichZhigulinRabinovich:2004:Chaos}
{\sc V~S Afraimovich, V~P Zhigulin, and M~I Rabinovich}, {\em On the origin of
  reproducible sequential activity in neural circuits}, Chaos, 14 (2004),
  pp.~1123--9.

\bibitem{Andronov+Pontryagin:1937}
{\sc A.A. Andronov and L.S. Pontryagin}, {\em Syst\`{e}mes grissiers}, {Dokl.
  Akad. Nauk. SSSR}, 14 (1937).

\bibitem{ArmbrusterStoneKirk:2003:Chaos}
{\sc Dieter Armbruster, Emily Stone, and Vivien Kirk}, {\em Noisy heteroclinic
  networks}, Chaos, 13 (2003), pp.~71--79.

\bibitem{Bakhtin2010ProbThyRelatFields}
{\sc Yuri Bakhtin}, {\em Noisy heteroclinic networks}, Probability Theory and
  Related Fields,  (2010), pp.~1--42.
\newblock 10.1007/s00440-010-0264-0.

\bibitem{BeerChielGallagher:1999:JComputNsci}
{\sc R~D Beer, H~J Chiel, and J~C Gallagher}, {\em Evolution and analysis of
  model CPGs for walking: II. General principles and individual variability}, J
  Comput Neurosci, 7 (1999), pp.~119--47.

\bibitem{BolandGallaMcKane2008JStatMech}
{\sc Richard~P. Boland, Tobias Galla, and Alan~J. McKane}, {\em How limit
  cycles and quasi-cycles are related in systems with intrinsic noise}, Journal
  of Statistical Mechanics: Theory and Experiment, 2008 (2008), p.~P09001.

\bibitem{BrownMoehlisHolmes:2004:NeComp}
{\sc Eric Brown, Jeff Moehlis, and Philip Holmes}, {\em On the phase reduction
  and response dynamics of neural oscillator populations}, Neural Comput, 16
  (2004), pp.~673--715.

\bibitem{BuchananCohen:1982:JNeurophys}
{\sc J~T Buchanan and A~H Cohen}, {\em Activities of identified interneurons,
  motoneurons, and muscle fibers during fictive swimming in the lamprey and
  effects of reticulospinal and dorsal cell stimulation}, J Neurophysiol, 47
  (1982), pp.~948--60.

\bibitem{BuonoGolubitsky:2001:JMathBiol}
{\sc P~L Buono and M~Golubitsky}, {\em Models of central pattern generators for
  quadruped locomotion. I. Primary gaits}, J Math Biol, 42 (2001),
  pp.~291--326.

\bibitem{ChielBeerGallagher:1999:JComputNsci}
{\sc H~J Chiel, R~D Beer, and J~C Gallagher}, {\em Evolution and analysis of
  model CPGs for walking: I. Dynamical modules}, J Comput Neurosci, 7 (1999),
  pp.~99--118.

\bibitem{CohenErmentroutKiemelKopellSigvardtWilliams:1992:TrendsNsci}
{\sc A~H Cohen, G~B Ermentrout, T~Kiemel, N~Kopell, K~A Sigvardt, and T~L
  Williams}, {\em Modelling of intersegmental coordination in the lamprey
  central pattern generator for locomotion}, Trends Neurosci, 15 (1992),
  pp.~434--8.

\bibitem{Coombes:2008:SIADS}
{\sc S.~Coombes}, {\em Neuronal networks with gap junctions: A study of
  piecewise linear planar neuron models}, SIAM Journal on Applied Dynamical
  Systems, 7 (2008), pp.~1101--1129.

\bibitem{CoombesBressloff2005BurstingBook}
{\sc Stephen Coombes and Paul~C. Bressloff}, eds., {\em Bursting: The Genesis
  of Rhythm in the Nervous System}, World Scientific, 2005.

\bibitem{CoombesOsbaldestin:2000:PRE}
{\sc S~Coombes and A~H Osbaldestin}, {\em Period-adding bifurcations and chaos
  in a periodically stimulated excitable neural relaxation oscillator}, Phys
  Rev E Stat Phys Plasmas Fluids Relat Interdiscip Topics, 62 (2000),
  pp.~4057--66.

\bibitem{De-SchutterAngstadtCalabrese:1993:JNPhys}
{\sc E~De~Schutter, J~D Angstadt, and R~L Calabrese}, {\em A model of graded
  synaptic transmission for use in dynamic network simulations}, J
  Neurophysiol, 69 (1993), pp.~1225--35.

\bibitem{Edwards+Glass:2000:Chaos}
{\sc R.~Edwards and L.~Glass}, {\em Combinatorial explosion in model gene
  networks}, Chaos, 10 (2000), pp.~691--704.

\bibitem{ErmentroutSaunders2006JCNS}
{\sc Bard Ermentrout and David Saunders}, {\em Phase resetting and coupling of
  noisy neural oscillators}, J Comput Neurosci, 20 (2006), pp.~179--90.

\bibitem{ErmentroutGalanUrban:2008:TNsci}
{\sc G~Bard Ermentrout, Roberto~F Gal{\'a}n, and Nathaniel~N Urban}, {\em
  Reliability, synchrony and noise}, Trends Neurosci, 31 (2008), pp.~428--34.

\bibitem{ErmentroutTerman2010book}
{\sc G.~Bard Ermentrout and David~H. Terman}, {\em Foundations Of Mathematical
  Neuroscience}, Springer, 2010.

\bibitem{GaiDoironRinzel:2010:PLoSComputBiol}
{\sc Yan Gai, Brent Doiron, and John Rinzel}, {\em Slope-based stochastic
  resonance: How noise enables phasic neurons to encode slow signals}, PLoS
  Comput Biol, 6 (2010), p.~e1000825.

\bibitem{GlassPasternack1978BMB}
{\sc Leon Glass and Joel~S. Pasternack}, {\em Prediction of limit cycles in
  mathematical models of biological oscillations}, Bull Math Biol, 40 (1978),
  pp.~27--44.

\bibitem{GlassPasternack1978JMB}
\leavevmode\vrule height 2pt depth -1.6pt width 23pt, {\em Stable oscillations
  in mathematical models of biological control systems}, J. Math. Biology, 6
  (1978), pp.~207--223.

\bibitem{GolubitskyStewartBuonoCollins:1999:Nature}
{\sc M~Golubitsky, I~Stewart, P~L Buono, and J~J Collins}, {\em Symmetry in
  locomotor central pattern generators and animal gaits}, Nature, 401 (1999),
  pp.~693--5.

\bibitem{Guckenheimer+Holmes1990}
{\sc John Guckenheimer and Philip Holmes}, {\em Nonlinear Oscillations,
  Dynamical Systems, and Bifurcations of Vector Fields}, vol.~42 of Applied
  Mathematical Sciences, Springer-Verlag, third~ed., 1990.

\bibitem{IchinoseAiharaJudd:1998:IntJBifChaos}
{\sc Natsuhiro Ichinose, Kazuyuki Aihara, and Kevin Judd}, {\em Extending the
  concept of isochrons from oscillatory to excitable systems for modeling an
  excitable neuron}, International Journal of Bifurcation and Chaos, 8 (1998),
  pp.~2375--2385.

\bibitem{Ijspeert:2008:NeuralNet}
{\sc Auke~Jan Ijspeert}, {\em Central pattern generators for locomotion control
  in animals and robots: a review}, Neural Netw, 21 (2008), pp.~642--53.

\bibitem{Izhikevich2007}
{\sc Eugene~M. Izhikevich}, {\em Dynamical Systems in Neuroscience: The
  Geometry of Excitability and Bursting}, Computational Neuroscience, MIT
  Press, Cambridge, Massachusetts, 2007.

\bibitem{McKean1970AdvMath}
{\sc H.P. McKean}, {\em Nagumo's equation}, Advances in Mathematics, 4 (1970),
  pp.~209--223.

\bibitem{NovakTyson:2008:NatRevMolCellBiol}
{\sc B{\'e}la Nov{\'a}k and John~J Tyson}, {\em Design principles of
  biochemical oscillators}, Nat Rev Mol Cell Biol, 9 (2008), pp.~981--91.

\bibitem{NowotnyRabinovitch2007PRL}
{\sc Thomas Nowotny and Mikhail~I Rabinovich}, {\em Dynamical origin of 
independent spiking and bursting activity in neural microcircuits}, Phys Rev
Let, 98 (2007), p.~e128106.

\bibitem{PintoGolubitsky:2006:JMathBiol}
{\sc Carla M~A Pinto and Martin Golubitsky}, {\em Central pattern generators
  for bipedal locomotion}, J Math Biol, 53 (2006), pp.~474--89.

\bibitem{RabinovichHuertaLaurent:2008:Science}
{\sc Misha Rabinovich, Ramon Huerta, and Gilles Laurent}, {\em Neuroscience.
  Transient dynamics for neural processing}, Science, 321 (2008), pp.~48--50.

\bibitem{RabinovichEtAl2008PLoS-CB}
{\sc Mikhail~I Rabinovich, Ram{\'o}n Huerta, Pablo Varona, and Valentin~S
  Afraimovich}, {\em Transient cognitive dynamics, metastability, and decision
  making}, PLoS Comput Biol, 4 (2008), p.~e1000072.

\bibitem{RabinovichEtAl2006RMP}
{\sc Mikhail~I. Rabinovich, Pablo Varona, Allen~I. Selverston, and Henry~D.I.
  Abarbanel}, {\em Dynamical principles in neuroscience}, Reviews of Modern
  Physics, 78 (2006), pp.~1213--1265.

\bibitem{RabinovitchThiebergerFriedman:1994:PRE}
{\sc Rabinovitch, Thieberger, and Friedman}, {\em Forced Bonhoeffer--van der Pol
  oscillator in its excited mode}, Phys Rev E Stat Phys Plasmas Fluids Relat
  Interdiscip Topics, 50 (1994), pp.~1572--1578.

\bibitem{RabinovitchRogachevskii:1999:Chaos}
{\sc A.~Rabinovitch and I.~Rogachevskii}, {\em Threshold, excitability and
  isochrones in the Bonhoeffer--van der Pol system}, Chaos, 9 (1999),
  pp.~880--886.

\bibitem{Reyn1980-bookchapter}
{\sc J.W. Reyn}, {\em Geometrical Approaches to Differential Equations},
  Nonlinear Optimization, Springer, 1980, ch.~Generation of limit cycles from
  separatrix polygons in the phase plane.

\bibitem{RubinShevtsovaErmentroutSmithRybak:2009:JNeurophys}
{\sc Jonathan~E Rubin, Natalia~A Shevtsova, G~Bard Ermentrout, Jeffrey~C Smith,
  and Ilya~A Rybak}, {\em Multiple rhythmic states in a model of the
  respiratory central pattern generator}, J Neurophysiol, 101 (2009),
  pp.~2146--65.

\bibitem{Sammon:1994:JApplPhysiol}
{\sc M~Sammon}, {\em Geometry of respiratory phase switching}, J Appl Physiol,
  77 (1994), pp.~2468--80.

\bibitem{Shaw-Chiel-Thomas-2010-SFN-poster}
{\sc K.M. Shaw, M.J. Cullins, H.~Lu, J.M. {McManus}, H.J. Chiel, and P.J.
  Thomas}, {\em Evidence for a stable heteroclinic channel underlying a central
  pattern generator}, in {2010 Neuroscience Meeting Planner. San Diego, CA:
  Society for Neuroscience}, 2010 online.

\bibitem{SpardyEtAlRubin2010SFN-conf}
{\sc L.E. Spardy, S.N. Markin, A.N. Klishko, N.A. Shevtsova, B.I. Prilutsky,
  I.A. Rybak, and J.~E. Rubin}, {\em A dynamical systems analysis of afferent
  control in a neuro-mechanical model of locomotion. {Program No. 289.9}}, in
  Neuroscience Meeting Planner. San Diego, CA: Society for Neuroscience, 2010.

\bibitem{StiefelFellousThomasSejnowski2010EJN}
{\sc Klaus~M Stiefel, Jean-Marc Fellous, Peter~J Thomas, and Terrence~J
  Sejnowski}, {\em Intrinsic subthreshold oscillations extend the influence of
  inhibitory synaptic inputs on cortical pyramidal neurons}, Eur J Neurosci, 31
  (2010), pp.~1019--26.

\bibitem{StiefelGutkinSejnowski:2008:PLoSOne}
{\sc Klaus~M Stiefel, Boris~S Gutkin, and Terrence~J Sejnowski}, {\em
  Cholinergic neuromodulation changes phase response curve shape and type in
  cortical pyramidal neurons}, PLoS One, 3 (2008), p.~e3947.

\bibitem{StoneArmbruster:1999:Chaos}
{\sc Emily Stone and Dieter Armbruster}, {\em Noise and o(1) amplitude effects
  on heteroclinic cycles}, Chaos, 9 (1999), pp.~499--506.

\bibitem{StoneHolmes1990SIAMJApplMath}
{\sc Emily Stone and Philip Holmes}, {\em Random perturbations of heteroclinic
  attractors}, SIAM J. Appl. Math, 50 (1990), pp.~726--743.

\bibitem{sutton+etal+chiel:2004:biolcyb}
{\sc G.P. Sutton, E.V. Mangan, D.M. Neustadter, R.D. Beer, P.E. Crago, and H.J.
  Chiel}, {\em Neural control exploits changing mechanical advantage and
  context dependence to generate different feeding responses in
  {{{\textit{{Aplysia}}}}}}, Biol Cybern., 91 (2004), pp.~333--45.
\newblock Epub 2004 Oct 27.

\bibitem{SuttonEtAlChiel:2004:JCompPhys}
{\sc G~P Sutton, J~B Macknin, S~S Gartman, G~P Sunny, R~D Beer, P~E Crago, D~M
  Neustadter, and H~J Chiel}, {\em Passive hinge forces in the feeding
  apparatus of {{\textit{Aplysia}}} aid retraction during biting but not during
  swallowing}, J Comp Physiol A Neuroethol Sens Neural Behav Physiol, 190
  (2004), pp.~501--14.

\bibitem{neurocomputing:ThomasEtAl:2003}
{\sc Peter~J. Thomas, Paul H.~E. Tiesinga, Jean-Marc Fellous, and Terrence~J.
  Sejnowski}, {\em Reliability and bifurcation in neurons driven by multiple
  sinusoids}, Neurocomputing, 52-54 (2003), pp.~955--961.

\bibitem{ToupsEtAl2011NC-toappear}
{\sc J.~Vincent Toups, Jean-Marc Fellous, Peter~J. Thomas, Terrence~J.
  Sejnowski, and Paul~H. Tiesinga}, {\em Finding the event structure of
  neuronal spike trains}, Neural Computation,  (2011), p.~(in press).

\bibitem{TysonNovak:2001:JTB}
{\sc J~J Tyson and B~Novak}, {\em Regulation of the eukaryotic cell cycle:
  Molecular antagonism, hysteresis, and irreversible transitions}, J Theor
  Biol, 210 (2001), pp.~249--63.

\bibitem{VarkonyiKiemelHoffmanCohenHolmes:2008:JCNS}
{\sc P{\'e}ter~L V{\'a}rkonyi, Tim Kiemel, Kathleen Hoffman, Avis~H Cohen, and
  Philip Holmes}, {\em On the derivation and tuning of phase oscillator models
  for lamprey central pattern generators}, J Comput Neurosci, 25 (2008),
  pp.~245--61.

\bibitem{VaronaRabinovichSelverstonArshavsky:2002:Chaos}
{\sc Pablo Varona, Mikhail~I. Rabinovich, Allen~I. Selverston, and Yuri~I.
  Arshavsky}, {\em Winnerless competition between sensory neurons generates
  chaos: A possible mechanism for molluscan hunting behavior}, Chaos, 12
  (2002), pp.~672--677.

\bibitem{Wilson1999SpikesBook}
{\sc Hugh~R. Wilson}, {\em Spikes, Decisions and Actions}, Oxford University
  Press, 1999.

\end{thebibliography}
\end{document}